\DeclareMathOperator{\twist}{tw}
\DeclareMathOperator{\X}{X}
\DeclareMathOperator{\h}{H}
\DeclareMathOperator{\Hyp}{Hyp}
\DeclareMathOperator{\I}{i}
\DeclareMathOperator{\Area}{Area}
\newcommand{\calA}{{\mathcal A}}
\newcommand{\quasi}{{\sf K}}
\newcommand{\radius}{{\sf r}}
\newcommand{\error}{{\sf c}}
\newcommand{\tY}{{\widetilde{Y}}}
\newcommand{\param}{{\mathchoice{\mkern1mu\mbox{\raise2.2pt
\hbox{$\centerdot$}}
\mkern1mu}{\mkern1mu\mbox{\raise2.2pt\hbox{$\centerdot$}}\mkern1mu}{
\mkern1.5mu\centerdot\mkern1.5mu}{\mkern1.5mu\centerdot\mkern1.5mu}}}
\long\def\@savemarbox#1#2{\global\setbox#1\vtop{\hsize\marginparwidth
  \@parboxrestore\tiny\raggedright #2}}
\newcommand{\finishproof}[1]{
  \def\FPArg{#1}
  \ifx\FPArg\Empty
        \newcommand\FPArg{\CrntSt}  \fi
  \smallbreak\noindent\makebox[\textwidth]{\hfill\fbox{\FPArg}}
  \medbreak\noindent
}
\def\square{\hfill${\vcenter{\vbox{\hrule height.4pt \hbox{\vrule width.4pt
height7pt \kern7pt \vrule width.4pt} \hrule height.4pt}}}$}
\newtheorem{theorem}{Theorem}
\newtheorem{proposition}[theorem]{Proposition}
\newtheorem{corollary}[theorem]{Corollary}
\newtheorem{lemma}[theorem]{Lemma}
\theoremstyle{definition}
\newtheorem{definition}[theorem]{Definition}
\newtheorem{example}[theorem]{Example}
\theoremstyle{remark}
\theoremstyle{theorem}
\newtheorem{introthm}{Theorem}
\newcommand{\thmref}[1]{Theorem~\ref{#1}}
\newcommand{\propref}[1]{Proposition~\ref{#1}}
\newcommand{\secref}[1]{\S\ref{#1}}
\newcommand{\lemref}[1]{Lemma~\ref{#1}}
\newcommand{\corref}[1]{Corollary~\ref{#1}} 
\newcommand{\figref}[1]{Fig.~\ref{#1}}
\DeclareMathOperator{\Mod}{Mod}
\DeclareMathOperator{\Ext}{Ext}
\DeclareMathOperator{\diam}{diam}
\newcommand{\from}{\colon\thinspace}
\newcommand{\emul}{\overset{.}{\asymp}}
\newcommand{\gmul}{\overset{.}{\succ}}
\newcommand{\lmul}{\overset{.}{\prec}}
\newcommand{\eadd}{\overset{+}{\asymp}}
\newcommand{\calB}{{\mathcal B}}
\newcommand{\calY}{{\mathcal Y}}
\newcommand{\calZ}{{\mathcal Z}}
\newcommand{\calT}{{\mathcal T}}
\newcommand{\calG}{\mathcal{G}}
\newcommand{\MF}{\mathcal{MF}}
\newcommand{\Teich}{Teichm\"uller }
\newcommand\R{{\mathbb R}}
\newcommand{\ep}{\epsilon}
\begin{document}

\title{Length of a Curve Is Quasi-Convex Along a Teichm\"uller Geodesic}
\author{Anna Lenzhen and Kasra Rafi}
\date{\today}

\begin{abstract}
We show that for every simple closed curve $\alpha$, the extremal
length and the hyperbolic length of $\alpha$ are quasi-convex
functions along any \Teich geodesic. As a corollary, we conclude that,
in \Teich space equipped with the \Teich metric, balls are
quasi-convex.
\end{abstract}

\maketitle

\section{Introduction}

In this paper we examine how the extremal length and the hyperbolic
length of a measured lamination change along a \Teich geodesic. We prove that 
these lengths are quasi-convex functions of time. The convexity issues
in \Teich space equipped with \Teich metric are hard to approach and are 
largely unresolved. For example it is not known whether it is possible 
for the convex hall of three points in \Teich space to be the entire space. 
(This is an open question of Masur.)

Let $S$ be a surface of 
finite topological type. Denote the \Teich space of $S$ equipped with the \Teich 
metric by $\calT(S)$. For a Riemann surface $x$ and a measured lamination $\mu$, 
we denote the extremal length of $\mu$ in $x$ by $\Ext_x(\mu)$ and the hyperbolic
length of $\mu$ in $x$ by $\Hyp_x(\mu)$. 

\begin{introthm} \label{Thm:Main}
There exists a constant $\quasi$, such that for every measured lamination
$\mu$, any \Teich geodesic $\calG$ and points $x,y,z \in \calT(S)$ appearing 
in that order along $\calG$ we have
$$
\Ext_y(\mu) \leq \quasi \max \big( \Ext_x(\mu), \Ext_z(\mu) \big),
$$
and
$$
\Hyp_y(\mu) \leq \quasi \max \big( \Hyp_x(\mu), \Hyp_z(\mu) \big).
$$
\end{introthm}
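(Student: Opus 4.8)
The plan is to fix a simple closed curve and prove that the logarithm of its extremal length, as a function of time along $\calG$, differs by an additive amount bounded in terms of $S$ from a convex function; this immediately yields the extremal-length inequality with $\quasi=\quasi(S)$. First I would reduce to this case: weighted simple closed curves are dense in $\ML$ and $(\mu,x)\mapsto\Ext_x(\mu)$ is continuous, so it suffices to treat simple closed curves $\alpha$; and by Maskit's comparison of hyperbolic and extremal length there is a fixed increasing function $F$ with $\Hyp_x(\alpha)$ comparable to $F\bigl(\Ext_x(\alpha)\bigr)$ and with $F(\quasi\, t)\le\quasi' F(t)$ for all $t>0$, so the hyperbolic estimate follows from the extremal one. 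Since monotone reparametrization of $\calG$ does not affect quasi-convexity, I parametrize $\calG$ by arc length and let $q$ be the unit-norm quadratic differential generating it.

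The argument proceeds by induction on the complexity of $S$. When $\calT(S)$ is a copy of $\hyp$ with a constant multiple of the hyperbolic metric, a simple closed curve $\alpha$ corresponds to a point of $\partial\hyp$, and $\Ext_x(\alpha)$ is, up to a bounded factor, the reciprocal of the imaginary part of the image of $x$ under the Möbius transformation carrying that point to $\infty$; since the imaginary part along a hyperbolic geodesic is proportional to $1/\cosh$ of an affine function of arc length, $\log\Ext_{\calG(t)}(\alpha)$ is within $O(1)$ of a convex function. For the inductive step, let $\nm,\np$ be the horizontal and vertical foliations of $q$. I would use Rafi's description of extremal length through the thick--thin decomposition of a quadratic differential to write $\log\Ext_{\calG(t)}(\alpha)$, up to an error depending only on $S$, as the maximum of several contributions: a ``thick'' term of the form $\log\bigl(e^{2t}\I(\alpha,\nm)^2+e^{-2t}\I(\alpha,\np)^2\bigr)$ — convex, since $t\mapsto\log(ae^{2t}+be^{-2t})$ has second derivative $16ab/(ae^{2t}+be^{-2t})^2\ge0$ — which governs $\Ext_{\calG(t)}(\alpha)$ on the subintervals where $\calG(t)$ is $\epsilon_0$-thick; and, for each simple closed curve $\beta$ whose extremal length along $\calG$ drops below the threshold $\epsilon_0$, an ``annular'' term $M_\beta(t)$ coming from the flat annulus around $\beta$, together with contributions from the essential subsurfaces of $S\setminus\beta$, which by Minsky's product regions theorem reduce to extremal lengths along Teichm\"uller geodesics of lower-complexity Teichm\"uller spaces and hence, by the inductive hypothesis, are within $O(1)$ of convex functions. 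A maximum of convex functions is convex, so it suffices to check that each $M_\beta$ is convex.

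The convexity of $M_\beta$ is the computational heart of the matter, and here Minsky's product regions theorem enters again: on the interval $I_\beta$ where $\Ext_{\calG(t)}(\beta)<\epsilon_0$, the geodesic $\calG$ fellow-travels a geodesic in the $\beta$-factor $\hyp_\beta$, so writing $\eta_\beta(t)\asymp1/\Ext_{\calG(t)}(\beta)$ for the imaginary part and $\xi_\beta(t)$ for the twist coordinate one has $\eta_\beta(t)\asymp\radius/\cosh(t-t_0)$ and $\xi_\beta(t)\asymp\xi_0-\radius\tanh(t-t_0)$ for constants $\radius\ge1/\epsilon_0$, $\xi_0$, $t_0$ (or $\xi_\beta$ constant and $\eta_\beta$ monotone, in the degenerate case). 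Rafi's annular term is, up to a constant, $M_\beta(t)=2\log\I(\alpha,\beta)+\log\bigl(\eta_\beta(t)+\eta_\beta(t)^{-1}d_\beta(t)^2\bigr)$, where $d_\beta(t)$ is the twisting of $\alpha$ about $\beta$ measured at $\calG(t)$, comparable to $|\xi_\beta(t)-\kappa_\alpha|$ for a constant $\kappa_\alpha$; when $\alpha=\beta$ the term is simply $-\log\eta_\beta(t)\eadd\log\cosh(t-t_0)$. Substituting and setting $\delta=\xi_0-\kappa_\alpha$, the function $M_\beta(t)$ agrees up to a constant with $\log\cosh(t-t_0)+\log\bigl(\radius^2+\delta^2-2\delta\radius\tanh(t-t_0)\bigr)$, and a direct calculation gives
\[
M_\beta''(t)=\frac{(\radius^2-\delta^2)^2}{\cosh^2(t-t_0)\,\bigl(\radius^2+\delta^2-2\delta\radius\tanh(t-t_0)\bigr)^2}\ \geq\ 0 ,
\]
so $M_\beta$ is convex. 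This closes the induction.

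The substance — and the step I expect to be the main obstacle — is the structural claim above: that $\log\Ext_{\calG(t)}(\alpha)$ really is, up to an error depending only on $S$, the maximum of these convex model functions. Proving it means combining Rafi's thick--thin decomposition of the flat structure with Minsky's product-region estimates along the whole of $\calG$, keeping track of how the decomposition evolves with time — which curves are short, how the thin parts nest inside one another, and where the transitions between the thick and thin regimes occur — and in particular checking that each $M_\beta$, and each subsurface contribution, stays dominated by the relevant term outside the interval on which it is active. The convexity computations are elementary; it is this bookkeeping that carries the weight.
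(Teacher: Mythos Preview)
Your reduction of the hyperbolic-length inequality to the extremal-length inequality is incorrect. Maskit's comparison gives $2e^{-\Hyp_x(\alpha)/2}\le \Hyp_x(\alpha)/\Ext_x(\alpha)\le\pi$, so for a given value of $\Ext_x(\alpha)=E$ the hyperbolic length can lie anywhere between roughly $2\log E$ and $\pi E$ when $E$ is large; different Riemann surfaces with the same $\Ext_x(\alpha)$ realize different values of $\Hyp_x(\alpha)$ in this range. There is therefore no universal increasing $F$ with $\Hyp_x(\alpha)\emul F(\Ext_x(\alpha))$, and the hyperbolic case cannot be read off from the extremal case in this way. The paper treats the hyperbolic length by a parallel but separate argument (\thmref{Thm:Hyperbolic}), with its own length estimate (\thmref{Thm:LengthEstimate-Hyp}) and its own case analysis.

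The inductive step also has a gap. Minsky's product-regions theorem compares $d_\calT$ to a sup of factor distances up to an additive constant; it does not say that a \Teich geodesic $\calG$ in $\calT(S)$ projects to \Teich geodesics in the complementary subsurface factors. In fact those projections are, after reparametrization, only quasi-geodesics, so invoking the inductive hypothesis ``quasi-convexity along \Teich geodesics of lower complexity'' does not apply to them. You would need quasi-convexity along quasi-geodesics, which is a strictly stronger statement than the one you are proving. Relatedly, the structural claim you flag as ``bookkeeping'' --- that $\log\Ext_{\calG(t)}(\alpha)$ is, up to $O(1)$, a maximum of the specific convex model functions you list --- is exactly where the difficulty lies: the thick--thin decomposition of $q_t$ changes with $t$, the intervals $I_\beta$ can nest and overlap, and the contribution from a subsurface can spread across several pieces as $t$ moves.

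The paper's route is genuinely different from yours. Rather than building a global convex model for $\log\Ext_{\calG(t)}(\gamma)$, it argues pointwise: at the middle time $b$, the estimate in \thmref{Thm:LengthEstimate} forces some piece of the thick--thin decomposition of $q_b$ to carry a definite share of $\Ext_b(\gamma)$, and $\gamma$ restricted to that piece is mostly horizontal or mostly vertical. \propref{Prop:Horizontal} then shows directly that if $\gamma$ is essentially horizontal at $q_a$ then $\Ext_b(\gamma)\gmul\Ext_a(\gamma)$ for all $b>a$, by tracking how that horizontal piece sits inside the (possibly different) thick--thin decomposition of $q_b$; the key technical inputs are \lemref{Lem:Subsurface} and the arc estimate \lemref{Lem:Arcs}. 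No induction on complexity and no product-regions comparison is used.
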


In $\sec$ \secref{Sec:Examples}, we provide some examples showing
that the quasi-convexity is the strongest statement one can hope for:

\begin{introthm}
The hyperbolic length and the extremal length of a curve 
are in general not convex functions of time along a \Teich geodesic. 
\end{introthm}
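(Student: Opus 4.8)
The plan is to construct, on a surface $S$ of sufficiently high complexity, a \Teich geodesic $\calG$ (with points written $x_t$, $t\in\R$), a simple closed curve $\alpha$, and times $t_0<t_1<t_2$ for which $\Hyp_{x_{t_1}}(\alpha)$ lies strictly above the chord joining $\big(t_0,\Hyp_{x_{t_0}}(\alpha)\big)$ to $\big(t_2,\Hyp_{x_{t_2}}(\alpha)\big)$. In fact we will produce a profile that \emph{increases} by a large factor $M$ on a short interval $[t_0,t_1]$ and is then \emph{essentially constant} on a much longer interval $[t_1,t_2]$: if $\Hyp_{x_{t_0}}(\alpha)\emul 1$ and $\Hyp_{x_{t_1}}(\alpha)\emul\Hyp_{x_{t_2}}(\alpha)\emul M$ with $t_1-t_0\ll t_2-t_0$, then the chord at time $t_1$ has height $\emul 1$, smaller by a factor $\sim M$ than $\Hyp_{x_{t_1}}(\alpha)$; so convexity fails as soon as $M$ beats the implicit constants. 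The same geodesic handles the extremal length, via $\Ext_x(\alpha)\emul\Hyp_x(\alpha)$ in the range where $\Hyp_x(\alpha)$ is bounded, which we ensure throughout $[t_0,t_2]$.

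The mechanism is Minsky's product regions theorem. Choose $S$ with an essential non-annular subsurface $Y$ whose boundary $\beta=\bdy Y$ is a single separating curve, and fix an essential curve $\alpha\subset Y$. We build a quadratic differential $q$, with vertical and horizontal foliations $\nm,\np$, prescribing the subsurface coefficients $d_\beta(\nm,\np)$, $d_Y(\nm,\np)$, $d_{S\sm Y}(\nm,\np)$ together with the order in which $\beta$, $Y$, $S\sm Y$ become ``active'' along $\calG=\calG(q)$: enough twisting about $\beta$, together with complexity in the adjacent pieces, that $\beta$ stays $\ep$-short on a long interval $[s_0,s_1]$ containing all subsequent activity; then, early in $[s_0,s_1]$, a moderate amount of complexity inside $Y$; and finally, much later in $[s_0,s_1]$, an enormous amount of complexity inside $S\sm Y$. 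Over $[s_0,s_1]$ the product regions theorem identifies a neighborhood of $\calG$ with $\calT(Y)\times\calT(S\sm Y)\times\hyp_\beta$ in the sup metric, and since $\alpha$ is disjoint from $\beta$ and essential in $Y$ we have $\Hyp_{x_t}(\alpha)\emul\Hyp_{x_t|_Y}(\alpha)$, which depends only on the $\calT(Y)$-coordinate. Taking $t_0$ at the start of the $Y$-activity, $t_1$ at its end, and $t_2$ at the end of the $S\sm Y$-activity, the $\calT(Y)$-coordinate of $x_t$ moves $\alpha$ from length $\emul 1$ to length $\emul M$ on $[t_0,t_1]$ (which forces $t_1-t_0\gadd\tfrac12\log M$) and is then constant on $[t_1,t_2]$, where only the $\calT(S\sm Y)$-coordinate changes, over a time $t_2-t_1$ chosen vastly larger than $\log M$. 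This is exactly the profile of the previous paragraph.

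Feasibility rests on the fact that the coefficients $d_\beta,d_Y,d_{S\sm Y}$ and the ordering of their active intervals along $\calG$ can be prescribed essentially independently, by assembling $\nm$ and $\np$ from pieces supported in $Y$, in $S\sm Y$, and twisting around $\beta$; this is the kind of statement the thick--thin description of \Teich geodesics supplies. The main obstacle is bookkeeping: the product regions theorem, the flat-versus-hyperbolic length comparison, and the dictionary between the coefficients and the geometry along $\calG$ are all coarse, so one must check that $[t_0,t_2]$ sits well inside the $\ep$-thin part of $\beta$, that no subsurface other than $Y$ contributes to $\Hyp_{x_t}(\alpha)$ on $[t_1,t_2]$, that the growth on $[t_0,t_1]$ is by a genuine factor while the plateau on $[t_1,t_2]$ is flat up to a bounded factor, and that the resulting ratio dwarfs every implicit constant --- each secured by enlarging the relevant coefficients. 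Finally, the counterexample is local to the product region cut out by $Y$ and $\beta$, so including $S$ into larger surfaces shows that the failure of convexity, of $\Hyp_x(\alpha)$ and hence by the comparison above of $\Ext_x(\alpha)$, is not special to any one topological type.
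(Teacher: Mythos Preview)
Your outline is sound and takes a genuinely different route from the paper.  The paper's proof is entirely explicit and elementary: it builds a genus-two flat surface by gluing two thin rectangular tori $T_1,T_2$ to a short flat cylinder $C$ with core $\alpha$, and estimates $\Ext_{x_t}(\alpha)$ directly.  For $t<0$ the cylinder $C$ has modulus $e^{-2t}$, so $\Ext_{x_t}(\alpha)\le e^{2t}$; a one-line metric estimate gives $\Ext_{x_0}(\alpha)\ge 1/3$; and for $0<t<\tfrac12\log(1/a^2)$ a hand-built conformal metric $\rho_t$ together with an explicit round annulus around each slit pins $\Ext_{x_t}(\alpha)$ between two absolute constants.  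Thus the average slope exceeds $1/8$ on $[-2,0]$ and tends to $0$ on an arbitrarily long plateau, contradicting convexity; Maskit's inequality then transfers this to $\Hyp$.  No product-regions machinery appears.  Your argument instead lives in the product-regions picture: keep $\beta=\bdy Y$ short, run the $\calT(Y)$--shadow briefly so that $\Hyp(\alpha)$ climbs from $\asymp 1$ to $\asymp M$, then run only the $\calT(S\sm Y)$--shadow for a vastly longer time so that $\Hyp(\alpha)$ freezes up to bounded factor.  This is more conceptual---it exhibits non-convexity as a consequence of disjoint subsurfaces being active on disjoint intervals, and it ports to any $S$ admitting such a decomposition---at the cost of invoking the product-regions theorem and Rafi's description of active intervals, and of having to chase all the additive and multiplicative constants that the paper's explicit example simply avoids.

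One point to fix: your transfer to the extremal length via ``$\Ext_x(\alpha)\emul\Hyp_x(\alpha)$ in the range where $\Hyp_x(\alpha)$ is bounded'' does not work as written, since that comparison holds only with constants depending on the bound, and you need the bound to be $M$, the very parameter you are sending large.  The clean repair is to note that, since $\alpha\subset Y$ is disjoint from $\beta$ and $\beta$ stays short on $[t_0,t_2]$, Minsky's product-regions theorem gives $\Ext_{x_t}(\alpha)\emul\Ext_{x_t|_Y}(\alpha)$ just as it does for $\Hyp$; hence $\Ext_{x_t}(\alpha)$ also depends, up to a bounded factor, only on the $\calT(Y)$--coordinate, and inherits the same ``climb, then plateau'' profile directly.
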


This contrasts with the results of  Kerckhoff \cite{kerckhoff:NR}, 
Wolpert \cite{wolpert:LN} and Bestvina-Bromberg-Fujiwara-Souto 
\cite{souto:SC}. They proved, respectively, that the 
hyperbolic length functions are convex along earthquake paths,
Weil-Petersson geodesics and a certain shearing paths. 

As a consequence of \thmref{Thm:Main}, we show that balls 
in \Teich space are quasi-convex.

\begin{introthm}
There exists a constant $\error$ so that, for every $x \in \calT(S)$,
every radius $\radius$ and points $y$ and $z$  in the ball
$\calB (x,\radius)$, the geodesic segment
$[y,z]$ connecting $y$ to $z$ is contained in $\calB(x,{\radius+\error})$.
\end{introthm}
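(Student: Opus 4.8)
The plan is to derive this as a short consequence of \thmref{Thm:Main} together with Kerckhoff's formula for the \Teich metric: for all $p, q \in \calT(S)$,
$$
e^{2\, d_{\calT}(p,q)} \;=\; \sup_{\mu} \frac{\Ext_q(\mu)}{\Ext_p(\mu)},
$$
the supremum being taken over all nonzero measured laminations $\mu$. Fix $x \in \calT(S)$, a radius $\radius$, and points $y, z \in \calB(x,\radius)$. Since $\calT(S)$ is uniquely geodesic, the segment $[y,z]$ is well defined, and it suffices to prove $d_{\calT}(x,w) \leq \radius + \error$ for an arbitrary $w \in [y,z]$.

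Fix such a $w$ and an arbitrary nonzero measured lamination $\mu$. Extending $[y,z]$ to a \Teich geodesic, the points $y, w, z$ appear along it in this order, so the extremal-length half of \thmref{Thm:Main}, applied with $y, w, z$ playing the roles of the three points, gives
$$
\Ext_w(\mu) \;\leq\; \quasi \max\big( \Ext_y(\mu), \Ext_z(\mu) \big).
$$
Dividing by $\Ext_x(\mu)$ and bounding $\Ext_y(\mu)/\Ext_x(\mu)$ and $\Ext_z(\mu)/\Ext_x(\mu)$ by the respective suprema, Kerckhoff's formula gives
$$
\frac{\Ext_w(\mu)}{\Ext_x(\mu)} \;\leq\; \quasi \max\big( e^{2\, d_{\calT}(x,y)}, e^{2\, d_{\calT}(x,z)} \big) \;\leq\; \quasi\, e^{2\radius},
$$
where the last step uses $y, z \in \calB(x,\radius)$. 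Since $\mu$ was arbitrary, taking the supremum over $\mu$ on the left and invoking Kerckhoff's formula once more for the pair $(x,w)$ yields $e^{2\, d_{\calT}(x,w)} \leq \quasi\, e^{2\radius}$, i.e.\ $d_{\calT}(x,w) \leq \radius + \tfrac{1}{2}\log\quasi$. Thus the statement holds with $\error = \tfrac{1}{2}\log\quasi$.

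There is essentially nothing hard left once \thmref{Thm:Main} is in hand: the argument is just the observation that Kerckhoff's description of the \Teich metric converts the quasi-convexity of extremal length into quasi-convexity of the distance function $w \mapsto d_{\calT}(x,w)$ along a geodesic, and hence of metric balls. The only reason to prefer the extremal-length half of \thmref{Thm:Main} over the hyperbolic one here is that Kerckhoff's identity is exact, so no further comparison estimates between hyperbolic and extremal length are needed.
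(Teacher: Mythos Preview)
Your proof is correct and is essentially the same as the paper's: both feed the extremal-length quasi-convexity of \thmref{Thm:Main} into Kerckhoff's formula $d_{\calT}(p,q)=\tfrac12\log\sup_\mu \Ext_q(\mu)/\Ext_p(\mu)$ to bound $d_{\calT}(x,w)$, arriving at $\error=\tfrac12\log\quasi$. The only cosmetic difference is that the paper picks a single $\mu$ realizing the supremum for the pair $(x,w)$, whereas you bound the ratio for arbitrary $\mu$ and then take the supremum; the two arguments are interchangeable.
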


We also construct an example of a long geodesic that stays near the boundary
of a ball, suggestion that balls in $\calT(S)$ may not be convex. \medskip

A \Teich geodesic can be described very explicitly as a deformation of a flat 
structure on $S$, namely, by stretching the horizontal direction and contracting 
the vertical direction. Much is known about the behavior of a \Teich
geodesic.  Our proof consists of combining the length estimates
given in \cite{minsky:PR, rafi:TT, rafi:TL} with the descriptions
of the behavior of a \Teich geodesic developed in 
\cite{rafi:SC, rafi:CM, rafi:LT}. 

As a first step, for a curve $\gamma$ and a quadratic differential 
$q$, we provide an estimate for the extremal length of $\gamma$ in the 
underlying conformal structure of $q$
(\thmref{Thm:LengthEstimate}) by describing 
what are the contributions to the extremal length of $\gamma$
from the restriction of $\gamma$ to various pieces of 
the flat surface associated to $q$. These pieces are either
\emph{thick sub-srufaces} or annuli with large moduli. 
We then introduce the notions of  \emph{essentially horizontal} and  
\emph{essentially vertical} (\corref{Cor:Significant} and 
Definition~\ref{Def:Essential}). Roughly speaking, a curve $\gamma$ is 
essentially horizontal in $q$ if the restriction of $\gamma$ to some 
piece of $q$ contributes a definite portion of the total extremal length
of $\gamma$ and if $\gamma$ is \emph{mostly horizontal} in that piece. 
We show that, while $\gamma$ is essentially vertical, its extremal length is 
\emph{essentially decreasing} and while $\gamma$ is essentially horizontal 
its extremal length is \emph{essentially increasing} (\thmref{Thm:Extremal}). 
This is because the flat length of the portion of $\gamma$ that is
mostly horizontal grows exponentially fast and becomes more and more 
horizontal. The difficulty with making this argument work is that
the thick-thin decomposition of $q$ changes as time goes by
and the portion of $\gamma$ that is horizontal and has a significant
extremal length can spread onto several thick pieces. That is why
we need to talk about the contribution to the extremal length of
$\gamma$ from every sub-arc of $\gamma$ (\lemref{Lem:Arcs}). 
The Theorem then follows from careful analysis of various possible
situations.  The proof for the hyperbolic length follows a similar path and is 
presented in $\sec$ \secref{Sec:Hyp}.

\subsection{Notation}
The notation $A \emul B$ means that the ratio $A/B$ is bounded both
above and below by constants depending on the topology of $S$ only.
When this is true we say $A$ is \emph{comparable} with $B$ or
$A$ and $B$ are comparable. 
The notation $A\lmul B$ means that $A/B$ is bounded above
by a constant depending on the topology of $S$.

\section{Background}
\subsection{Hyperbolic metric}
Let $x$ be a Riemann surface or equivalently (using uniformization) a complete 
hyperbolic metric on $S$.  By a \emph{curve} on $S$ we always mean a free 
homotopy class of non-trivial non-peripheral simple closed curve. Every 
curve $\gamma$ has a unique geodesic representative in the hyperbolic
metric $x$ which we call the \emph{$x$--geodesic representative}
of $\gamma$. We denote the hyperbolic length of the $x$--geodesic representative 
of $\gamma$ by $\ell_x(\gamma)$ and refer to it as the $x$--length $\gamma$. 

For a small positive constant $\ep_1$, the 
thick-think decomposition of $x$ is a pair $(\calA, \calY)$, where $\calA$ is the 
set of curves in $x$ that have hyperbolic length less than $\ep_1$ and $\calY$ is 
the set of components of $S \setminus (\cup_{\alpha \in \calA} \alpha)$. 
Note that, so far, we are only recording the topological information. One can make
this to a geometric decomposition as follows: for each $\alpha \in \calA$,
consider the annulus that is a regular neighborhood of the $x$--geodesic 
representative of $\alpha$ and has boundary length of $\ep_0$.
For $\ep_0 >\ep_1>0$ small enough, these annuli are disjoint 
(the Margulis Lemma) and their
complement is a union of subsurfaces with horocycle boundaries of length
$\ep_0$. For each $Y \in \calY$ we denote this representative of 
the homotopy class of $Y$ by $Y_x$.

If $\mu$ is a set of curves, then $\ell_x(\mu)$ is  the sum of the lengths
of the $x$--geodesic representatives of the curves in $\mu$. A short marking 
in $Y_x$ is a set $\mu_Y$ of curves in $Y$ so that $\ell_x(\mu_Y) =O(1)$ and
$\mu_Y$ fills the surface $Y$ (that is, every curve intersecting $Y$ 
intersects some curve in $\mu_Y$). 

If $\gamma$ is a curve and $Y \in \calY$, the \emph{restriction} 
$\gamma|_{Y_x}$ of $\gamma$ to $Y_x$ is the union of arcs
obtained by taking the intersection of the $x$--geodesic representative of
$\gamma$ with $Y_x$. Let $\gamma|_Y$ be the set of homotopy classes
(rel $\partial Y$) of arcs in $Y$ with end points on $\partial Y$. 
We think of $\gamma|_Y$ as a set of weighted arcs to  keep track of multiplicity. 
Note that $\gamma|_Y$ has only topological information while $\gamma|_{Y_x}$ 
is a set of geodesic arcs. An alternate way of defining $\gamma|_Y$ is to
consider the cover $\tY \to S$ corresponding to $Y$; that is, the cover
where $\tY$ is homeomorphic to $Y$ and such that $\pi_1(\tY)$
projects to a subgroup of $\pi_1(S)$ that is conjugate to $\pi_1(Y)$. 
Use the hyperbolic metric to construct a boundary at infinity for $\tY$. 
Then $\gamma|_{\tY}$ is the homotopy class of arcs in $\tY$ 
that are lifts of $\gamma$ and are not boundary parallel. Now the natural
homeomorphism from $\tY$ to $Y$ sends $\gamma|_{\tY}$
to $\gamma|_Y$. 

By $\ell_x(\gamma|_Y)$, we mean the $x$--length
of the shortest representatives of $\gamma|_Y$ in $Y_x$. 
It is well known that (see, for example, \cite{rafi:TL})
\begin{equation} \label{Eq:Hyp-Intersection}
 \ell_x(\gamma|_Y)  = \ell_x(\gamma|_{Y_x})\emul \I(\gamma, \mu_Y),
\end{equation}
where $\I(\param, \param)$ is the geometric intersection number and
$\I(\gamma, \mu_Y)$ is the sum of the geometric intersection numbers
between $\gamma$ and curves in $\mu_Y$.

\subsection*{Euclidean metric}
Let $q$ be a quadratic differential on $x$. In a local 
coordinate $z$, $q$ can be represented as $q(z) dz^2$ where $q(z)$
is holomorphic (when $x$ has punctures, $q$ is allowed to have poles of 
degree one at punctures). We call the metric $|q|=|q(z)| (dx^2 + dy^2)$ 
the flat structure of $q$. This is a locally flat metric
with singularities at zeros of $q(z)$ (see \cite{strebel:QD} for 
an introduction to the geometry of $q$). The $q$--geodesic representative
of a curve is not always unique; there may be a family of parallel copies 
of geodesics foliating a flat cylinder. For a curve $\alpha$, we denote
this flat cylinder of all $q$--geodesic representatives of $\alpha$ by
$F_\alpha^{\,q}$ or $F_\alpha$ if $q$ is fixed.  

Consider again the thick-thin decomposition $(\calA, \calY)$ of $x$.
(If $q$ is a quadratic differential on $x$, we sometimes call this the 
thick-thin decomposition of $q$. Note that  $(\calA, \calY)$ depends only on the
underlying conformal structure.) For $Y \in \calY$, the homotopy class of $Y$ 
has a representative with $q$--geodesic boundaries that is disjoint from the interior 
of the flat cylinders 
$F_\alpha$, for every $\alpha \in \calA$. We denote this subsurface by $Y_q$.
Note that $Y_q$ may be degenerate and have no interior (see \cite{rafi:SC}
for a more careful discussion). Let $\diam_q(Y)$ denote the
$q$--diameter of $Y_q$. We recall the following theorem
relating the hyperbolic and flat length of a curve in $Y$. 

\begin{theorem}[\cite{rafi:TT}] \label{Thm:q-Length}
For every curve $\gamma$ in $Y$
$$
\ell_q(\gamma) \emul \ell_x(\gamma) \, \diam_q(Y).
$$
\end{theorem}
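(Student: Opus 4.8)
The plan is to reduce the theorem to the \emph{bounded geometry of thick pieces}: the assertion that, after rescaling by $1/\diam_q(Y)$, the flat surface $Y_q$ belongs to a fixed compact family of flat surfaces, so that the flat metric on $Y_q$ and the hyperbolic metric $x$ restricted to $Y_x$ agree up to multiplicative errors depending only on the topology of $S$. First I would note that $\ell_q(\gamma)$ depends only on the flat geometry near $\gamma$. Since $\gamma$ lies in $Y$, it is disjoint from every $\alpha\in\calA$, so a $q$--geodesic representative of $\gamma$ is disjoint from the core of each flat cylinder $F_\alpha$; a simple closed $q$--geodesic disjoint from the core of a flat cylinder is disjoint from the whole open cylinder, so this representative is contained in $\overline{Y_q}$. (When $Y_q$ is degenerate it is a finite metric graph of scale $\diam_q(Y)$, and the same estimates apply to arcs in the graph; note that $Y$ carries no curve at all unless $\diam_q(Y)>0$.) Thus it suffices to compare the flat metric on $Y_q$ with the hyperbolic metric on $Y_x$.

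The technical heart is the estimate $\operatorname{sys}_q(Y_q)\emul\diam_q(Y)\emul\sqrt{\Area_q(Y_q)}$, where $\operatorname{sys}_q(Y_q)$ denotes the shortest flat length of an essential curve in $Y$, together with the fact that the number of zeros of $q$ inside $Y_q$ and the topological type of $Y$ are bounded in terms of the topology of $S$. The upper bounds $\operatorname{sys}_q(Y_q)\lmul\diam_q(Y)$ and $\Area_q(Y_q)\lmul\diam_q(Y)^2$ are soft: the first holds on any surface of bounded topological type, and the second follows because a flat metric ball of radius $R$ has area $\lmul R^2$ once the cone angles are bounded above (the angle at a zero of order $k$ is $(k+2)\pi$ with $k\lmul 1$), while $Y_q$ is covered by one ball of radius $\diam_q(Y)$. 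The lower bound on the systole is the \textbf{main obstacle}, and this is where the proof really lives: if $\beta$ is a curve in $Y$ with $\ell_q(\beta)\ll\diam_q(Y)$, then using the full flat metric $|q|$ as a candidate conformal metric gives $\Ext_x(\beta)\lmul \ell_q(\beta)^2/\Area_q(Y_q)\ll 1$; but a curve in a thick piece has extremal length $\gmul 1$, since Minsky's inequality yields $\Ext_x(\beta)\cdot\Ext_x(\mu_Y)\geq\I(\beta,\mu_Y)^2\geq 1$ while the short marking $\mu_Y$ has $\Ext_x(\mu_Y)\lmul 1$. Feeding this (applied to the systolic curve) into the elementary systole--area--diameter comparison for flat surfaces of bounded cone type — embedded balls of radius $\tfrac12\operatorname{sys}_q(Y_q)$ to bound area from below, disjoint such balls spaced along a diameter-realizing geodesic to bound the diameter from above, both carried out with care near the zeros and near $\partial Y_q$ — pins all three quantities to one another.

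Granting the bounded geometry, I would conclude by comparing both $\ell_q(\gamma)/\diam_q(Y)$ and $\ell_x(\gamma)$ with the intersection number $\I(\gamma,\mu_Y)$. For the hyperbolic side this is precisely \eqref{Eq:Hyp-Intersection}. For the flat side, pass to the rescaled surface $\widehat Y:=\diam_q(Y)^{-1}\cdot Y_q$, in which $\mu_Y$ has length $\emul 1$ and fills: on one hand $\gamma$ is homotopic to a concatenation of $\emul \I(\gamma,\mu_Y)$ arcs each running near $\mu_Y$ and hence of length $\emul 1$, so $\ell_{\widehat Y}(\gamma)\lmul\I(\gamma,\mu_Y)$; on the other hand Minsky's inequality in $\widehat Y$, whose area and systole are $\emul 1$, gives $\ell_{\widehat Y}(\gamma)\gmul\I(\gamma,\mu_Y)$. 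Rescaling back, $\ell_q(\gamma)=\diam_q(Y)\cdot\ell_{\widehat Y}(\gamma)\emul\diam_q(Y)\cdot\I(\gamma,\mu_Y)\emul\diam_q(Y)\cdot\ell_x(\gamma)$, which is the claim. (One can instead skip the marking and observe that a bounded-geometry flat metric and the hyperbolic metric of the same topological type are uniformly quasi-isometric, which is enough since every essential curve in a thick surface has length $\gmul 1$ in both metrics.)
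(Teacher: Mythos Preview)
The paper does not prove \thmref{Thm:q-Length}; it is quoted from \cite{rafi:TT} and used as a black box. Your outline follows the strategy of that reference---establish that $Y_q$ has bounded geometry after rescaling by $\diam_q(Y)$, then compare both lengths to $\I(\gamma,\mu_Y)$---so the architecture is correct.

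There is, however, a genuine error in your systole lower bound. You write that ``using the full flat metric $|q|$ as a candidate conformal metric gives $\Ext_x(\beta)\lmul \ell_q(\beta)^2/\Area_q(Y_q)$.'' This inequality points the wrong way: since $\Ext_x(\beta)=\sup_\rho \ell_\rho(\beta)^2/\Area_\rho$, any particular metric furnishes a \emph{lower} bound on extremal length, never an upper bound (compare the paper's own use of this in \eqref{ex:lowexp} and \eqref{ex:upconst}). Your contradiction therefore does not close. The correct mechanism is the one implicit in \lemref{Lem:EFE}: a $q$--short curve $\beta$ carries a large regular (expanding) neighborhood, an annulus of modulus $\gmul\log\big(r/\ell_q(\beta)\big)$, and it is \emph{this} annulus that forces $\Ext_x(\beta)$ small. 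Controlling the outer radius $r$ in terms of $\diam_q(Y)$ is exactly the content you are trying to establish, and the argument in \cite{rafi:TT} handles it by analyzing how the expanding annuli of the boundary curves of $Y$ sit inside $Y_q$; your sketch does not supply this.

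A smaller issue: the lower bound $\ell_{\widehat Y}(\gamma)\gmul\I(\gamma,\mu_Y)$ does not follow from Minsky's inequality $\Ext(\gamma)\Ext(\mu_Y)\geq\I(\gamma,\mu_Y)^2$ as you suggest, because the candidate-metric bound again goes the wrong way ($\Ext\geq\ell^2/\Area$, not $\leq$). What is needed is the flat-geometric inequality $\ell_q(\gamma)\,\ell_q(\mu_Y)\gmul \diam_q(Y)^2\,\I(\gamma,\mu_Y)$; the paper invokes precisely this in the proof of \propref{Prop:q--length} via \cite[Lemma~5]{rafi:TT}, but note that proof already uses \thmref{Thm:q-Length}, so you must prove it independently rather than borrow it.
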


Since $Y_q$ can be degenerate, one has to be more careful in defining 
$\ell_q(\gamma|_Y)$. Again we consider the cover $\tY \to S$ corresponding
to $Y$ and this time we equip $\tY$ with the locally Euclidean metric $\tilde q$
that is the pullback of $q$. The subsurface $Y_q$ lifts isometrically to
a subsurface $\tY_q$ in $\tY$. Consider the lift $\tilde \gamma$ of the 
$q$--geodesic representative of $\gamma$ to $\tY$ and the restriction
of $\tilde \gamma$ to $Y_q$.  
We define $\ell_q(\gamma|_Y)$ to be the $\tilde q$--length of this 
restriction. Note that $\ell_q(\gamma|_Y)$ may equal zero.  
(See the example at the end of \cite{rafi:TT}.)
However, a modified version of Equation~\eqref{Eq:Hyp-Intersection} 
still holds true for $\ell_q(\gamma|_Y)$:

\begin{proposition} \label{Prop:q--length}
For every curve $\gamma$ in $Y$
$$
\frac{\ell_q(\gamma|_Y)}{\diam_q(Y)} + \I (\gamma, \partial Y) 
       \emul \I(\gamma, \mu_Y).
$$
\end{proposition}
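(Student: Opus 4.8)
The plan is to prove a per-arc version of the estimate and sum; with \thmref{Thm:q-Length} fixing the relevant length scale to $\diam_q(Y)$, this is the flat counterpart of the hyperbolic identity \eqref{Eq:Hyp-Intersection}, the only genuinely new feature being that $Y_q$ may degenerate. First I would realize the $q$--geodesic representative of $\gamma$ in minimal position with respect to $\bigcup_{\alpha\in\calA}\alpha$ and to $\mu_Y$, and write $\gamma|_Y=\{\omega_1,\dots,\omega_n\}$ for its arcs in $Y$, counted with multiplicity, so that $\I(\gamma,\partial Y)=2n$, $\I(\gamma,\mu_Y)=\sum_i\I(\omega_i,\mu_Y)$, and $\ell_q(\gamma|_Y)=\sum_i\ell_q(\omega_i|_Y)$, where $\ell_q(\omega_i|_Y)$ is the $\tilde q$--length of the corresponding lift inside $\tY_q$. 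Since $\mu_Y$ fills $Y$, every essential arc with endpoints on $\partial Y$ crosses $\mu_Y$, so each $\I(\omega_i,\mu_Y)\ge1$; hence $n\le\I(\gamma,\mu_Y)$ and $\sum_i\bigl(\I(\omega_i,\mu_Y)+1\bigr)\emul\I(\gamma,\mu_Y)$. It therefore suffices to show, for every essential arc $\omega$ in $Y$,
$$
(\star)\qquad\I(\omega,\mu_Y)\ \emul\ \frac{\ell_q(\omega|_Y)}{\diam_q(Y)}+1,
$$
since summing $(\star)$ over $i$ and using $\I(\gamma,\partial Y)=2n$ gives the Proposition.

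For the bound $\ell_q(\omega|_Y)\lmul\I(\omega,\mu_Y)\diam_q(Y)$ I would work in the cover $\tY$ with the pulled back flat metric $\tilde q$. By \thmref{Thm:q-Length} each curve of $\mu_Y$ has $q$--length $\lmul\diam_q(Y)$, and $\partial Y_q$ --- a union of $q$--geodesics --- has $q$--length $\lmul\diam_q(Y)$ as well (see \cite{rafi:SC}); as $\mu_Y$ fills $Y$, the curves $\mu_Y\cup\partial Y_q$ cut $Y_q$ into flat disks each of $q$--diameter $\lmul\diam_q(Y)$. Two $q$--geodesics meet in minimal position, so $\tilde\omega\cap\tY_q$ crosses the lifts of $\mu_Y$ exactly $\I(\omega,\mu_Y)$ times and is cut into at most $\I(\omega,\mu_Y)+1$ geodesic subarcs, each lying in a lift of one such disk; since a geodesic segment in a flat disk is a shortest path, each subarc has length $\lmul\diam_q(Y)$, whence $\ell_q(\omega|_Y)\lmul\bigl(\I(\omega,\mu_Y)+1\bigr)\diam_q(Y)\lmul\I(\omega,\mu_Y)\diam_q(Y)$.

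The reverse bound $\I(\omega,\mu_Y)\diam_q(Y)\lmul\ell_q(\omega|_Y)+\diam_q(Y)$ is the heart of the matter, and the point where the degeneracy of $Y_q$ has to be confronted. If $\I(\omega,\mu_Y)$ is bounded there is nothing to do, so assume it is large; then by pigeonhole some $c\in\mu_Y$ has $\I(\omega,c)\gmul\I(\omega,\mu_Y)$. I would pass to the annular cover $\hat Y\to Y$ associated with $c$, equipped with the pulled back flat metric: the $q$--geodesic representative of $c$ lifts to the core $\hat c$, and since $c\in\mu_Y$ has hyperbolic length $\emul1$, \thmref{Thm:q-Length} gives $\ell_q(\hat c)\emul\diam_q(Y)$. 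The relevant lift of $\omega$ crosses $\hat c$ exactly $\I(\omega,c)$ times and, being a geodesic in minimal position, wraps around $\hat Y$ roughly $\I(\omega,c)$ times; the standard flat length estimate for such a curve (see \cite{minsky:PR,rafi:TT}) --- equivalently, convexity of the geodesic line $\hat c$ in the $\mathrm{CAT}(0)$ universal cover of $\hat Y$ --- forces its length to be $\gmul\I(\omega,c)\,\ell_q(\hat c)\gmul\I(\omega,\mu_Y)\diam_q(Y)$.

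The main obstacle I expect is the last bookkeeping step: one must show that this wrapping length is, up to an additive $O(\diam_q(Y))$, carried by the part of $\omega$ inside $Y_q$ rather than inside the flat cylinders $F_\alpha$, $\alpha\in\partial Y$. The idea is that an excursion of $\tilde\omega$ out of $\tY_q$ enters and leaves through $\partial\tY_q$, and since the class of $\omega$ is taken rel $\partial Y$ it carries no twisting about the boundary curves, so such an excursion is homotopically inessential and its cost is absorbed into $\ell_q(\partial Y_q)\lmul\diam_q(Y)$; making this quantitative --- and in particular handling the fully degenerate case in which $Y_q$ has empty interior --- is exactly the kind of analysis carried out in \cite{rafi:TT} (compare the example there of a curve with $\ell_q(\gamma|_Y)=0$), which I would invoke rather than redo.
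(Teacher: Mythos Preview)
Your reduction to a per-arc estimate is exactly what the paper does, and your argument for the upper bound in $(\star)$ via the disk decomposition of $Y_q$ is fine (and slightly more explicit than the paper, which just cites \cite[Theorem~6]{rafi:TT}). The genuine gap is in the reverse bound, and it is precisely the ``main obstacle'' you flag at the end---but your proposed fix does not work, and the paper resolves it by a trick you are missing.

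Your annular-cover argument bounds from below the $\tilde q$--length of a full geodesic representative of the arc $\omega$ in $\tY$, not the length of its intersection with $\tY_q$. These can differ by an arbitrarily large amount: the geodesic lift may spend most of its length in the infinite ends of $\tY$ outside $\tY_q$, and the ``cost'' of those excursions is not absorbed into $\ell_q(\partial Y_q)$ as you suggest---an excursion can be long even though it is homotopically trivial rel $\partial\tY_q$. In the fully degenerate case one can have $\ell_q(\omega|_Y)=0$ while $\I(\omega,\mu_Y)$ is large, so nothing in \cite{rafi:TT} applied directly to $\tY_q$ will rescue the inequality; what is needed is a way to thicken $\tY_q$.

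The paper's device is to work not in $\tY_q$ but in its $d$--neighborhood $Z\subset\tY$, where $d=\diam_q(Y)$. Each arc of $\tilde\gamma\cap\tY_q$ is extended perpendicularly at both endpoints until it hits $\partial Z$, producing an arc $\omega$ of length exactly $\ell_q(\omega|_{\tY_q})+2d$; summing recovers $\ell_q(\gamma|_Y)+d\,\I(\gamma,\partial Y)$ on the nose. The point is that $Z$ is \emph{thick}---its shortest essential curve has $\tilde q$--length $\emul d$ by \thmref{Thm:q-Length}---so the standard length--versus--intersection estimate $\ell_q(\alpha)\,\ell_q(\omega)\gmul d^2\,\I(\omega,\alpha)$ from \cite[Lemma~5]{rafi:TT} applies in $Z$ and gives $d\,\I(\omega,\mu_Y)\emul\ell_q(\omega)$ for the \emph{extended} arc. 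Dividing by $d$ yields exactly your $(\star)$ with the $+1$ coming from the added $2d$. So the citation of \cite{rafi:TT} is legitimate, but only after the extension trick has placed you in a thick region; invoking it directly on $\tY_q$, as you propose, is where the argument breaks.
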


\begin{proof}
As above, consider the cover $\tY \to S$, the subsurface $\tY_q$ that is
the isometric lift of $Y_q$ and the lift $\tilde \gamma$ of the
$q$--geodesic representative of $\gamma$.  For every curve 
$\alpha \in \mu_Y$, there is a lift of $\alpha$ that is a simple closed curve. 
To simplify notation, we denote this lift again by $\alpha$ and the collection 
of these curves by $\mu_Y$. Let $d= \diam_q(Y)$, let 
$Z$ be the $d$--neighborhood of $\tY_q$ in $\tY$ and let $\omega$ be an arc 
in $Z$ constructed as follows: Choose and arc of $\gamma|_{\tilde Y_q}$ 
(which is potentially just a point) and at each end point $p$, extend this
arc perpendicular to $\partial {\tilde Y_q}$ until it hits $\partial Z$ at a point $p'$
(see \figref{Fig:Cover}).

\begin{figure}[ht] \label{Fig:Cover}
\setlength{\unitlength}{0.01\linewidth}
\begin{picture}(70,58)
\put(0,0){\includegraphics[width=70\unitlength]{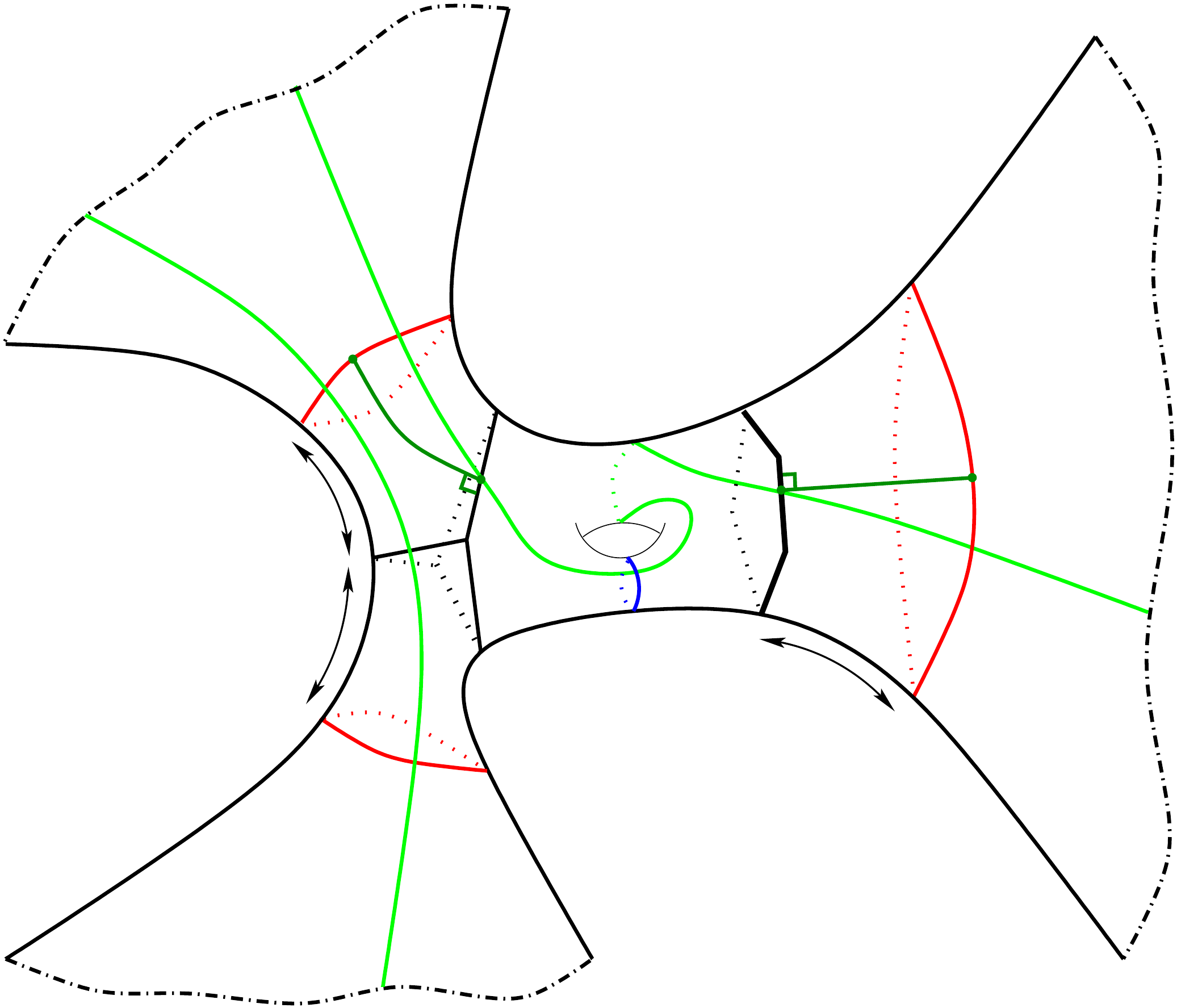}}
   \put(17.5,22){$d$}
   \put(17,28.5){$d$}
   \put(47.5,17.5){$d$}   
   \put(36.5,21){$\alpha$}
   \put(33,35.5){$\tilde Y_q$}
   \put(69,55){$\tilde Y$}
   \put(19,40){$p'$}
   \put(30,31){$p$}
   \put(49,42){$Z$}                  
   \put(19,51){$\tilde \gamma$}   
\end{picture}
 \caption{An arc in $\gamma|_{\tilde Y_{q}}$ can be extended to 
 an arc with end points on $Z$.}
\end{figure}

From the construction we have
$$
\ell_q(\omega) = \ell_q(\omega|_{\tilde Y_q}) + 2d .
$$
Summing over all such arcs, we have: 
$$
\sum_\omega \ell_q(\omega) 
  = \ell_q(\gamma|_Y) + d \I (\gamma, \partial Y).
$$
Also,
$$
\sum_\omega \I(\omega, \mu_Y) = \I(\gamma, \mu_Y).
$$
Hence, to prove the lemma, we only need to show 
\begin{equation} \label{Eq:Sufficient}
d \I(\omega, \mu_Y) \emul  \ell_q(\omega).
\end{equation}

The arguments needed here are fairly standard. In the interest
of brevity, we point the reader to some references instead of repeating
the arguments. Let $\alpha$ be a curve in $\mu_Y$. By \thmref{Thm:q-Length}, 
the $q$--length of the shortest essential curve in $Z$ (which has  hyperbolic 
length comparable with $1$) is comparable with $d$, 
hence the argument in the proof of \cite[Lemma 5]{rafi:TT} also implies
$$
\ell_q(\alpha) \, \ell_q(\omega) \gmul d^2 \I(\omega, \alpha).
$$
Therefore, $l_q(\omega) \gmul d \I(\omega, \alpha)$.  Summing over 
curves $\alpha \in \mu_Y$ (the number of which depends
on the topology of $S$ only), we have
$$
\ell_q(\omega) \gmul d \I(\omega, \mu_Y).
$$

It remains to show the other direction of Equation~\eqref{Eq:Sufficient}.
Here, one needs to construct  paths in $Y_q$ (traveling along
the geodesics in $\mu_Y$) representing arcs in $\gamma|_Y$ whose
lengths are of order $d \I(\gamma, \mu_Y)$. This is done
in the proof of  \cite[Theorem 6]{rafi:TT}).
\end{proof}

\subsection*{Regular and expanding annuli}
Let $(\calA, \calY)$ be the thick-thin decomposition of $q$ and let
$\alpha \in \calA$. Consider the $q$--geodesic representative of $\alpha$
and the family of regular neighborhoods of this geodesic in $q$.
Denote the largest regular neighborhood that is still homeomorphic to an 
annulus by $A_\alpha$. The annulus $A_\alpha$ contains the flat
cylinder $F_\alpha$ in the middle and two \emph{expanding} annuli 
on each end which we denote by $E_\alpha$ and $G_\alpha$:
$$
A_\alpha = E_\alpha \cup F_\alpha \cup G_\alpha.
$$
We call $E_\alpha$ and $G_\alpha$ expanding because if one considers the 
foliation of these annuli by curves that are equidistant to the geodesic representative 
of $\alpha$, the length of these curves increases as one moves away from
the $q$--geodesic representative of $\alpha$. This is in contrast with
$F_\alpha$ where all the equidistance curves have the same length. 
(See \cite{minsky:HM} for precise definition and discussion.) 
We denote the $q$--distance between the boundaries of $A_\alpha$ by 
$d_q(\alpha)$ and $q$--distance between the boundaries of $E_\alpha$,
$F_\alpha$ and $G_\alpha$ by $e_q(\alpha)$, $f_q(\alpha)$ and $g_q(\alpha)$ 
respectively. When $\alpha$ and $q$ are fixed, we simply use $e, f$ and $g$. 

\begin{lemma}[\cite{rafi:LT}] \label{Lem:EFE}
For $\alpha \in \calA$, 
$$
\frac 1 {\Ext_x(\alpha)} \emul \Mod_x(E_\alpha) +\Mod_x(F_\alpha)+\Mod_x(G_\alpha).
$$
Furthermore,  
$$
\Mod_x(E_\alpha) \emul \log \left( \frac {e}{\ell_q(\alpha)} \right), \qquad
\Mod_x(G_\alpha) \emul \log  \left( \frac {g}{\ell_q(\alpha)} \right),
$$
and
$$
\Mod_x(F_\alpha) \emul \frac {f}{\ell_q(\alpha)}.
$$
\end{lemma}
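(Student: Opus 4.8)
The plan is to split the statement into three ingredients: the classical identity $\Ext_x(\alpha)\cdot\Mod_x(\alpha)=1$, where $\Mod_x(\alpha)$ denotes the supremum of the moduli of embedded annuli in $x$ whose core is freely homotopic to $\alpha$; the geometric fact that, because $\alpha\in\calA$ is short, the flat collar $A_\alpha$ realizes this supremum up to a uniform multiplicative constant; and three essentially explicit computations of $\Mod_x(E_\alpha)$, $\Mod_x(F_\alpha)$, $\Mod_x(G_\alpha)$ in terms of the flat geometry. One direction of the first comparison is then immediate: $A_\alpha=E_\alpha\cup F_\alpha\cup G_\alpha$ is an embedded annulus with core $\alpha$, so the serial (Gr\"otzsch) rule, i.e. superadditivity of the modulus under stacking annuli along a common boundary circle, gives
$$
\frac{1}{\Ext_x(\alpha)}\ \geq\ \Mod_x(A_\alpha)\ \geq\ \Mod_x(E_\alpha)+\Mod_x(F_\alpha)+\Mod_x(G_\alpha).
$$

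For the reverse inequality the point is to bound the modulus of an \emph{arbitrary} embedded annulus homotopic to $\alpha$ by that of the flat collar $A_\alpha$ up to a constant factor. Here I would use the structure of the $|q|$--metric near a short geodesic: the $q$--geodesic representative of $\alpha$ sits in the flat cylinder $F_\alpha$ of all its parallel copies, with an expanding annulus $E_\alpha$, resp. $G_\alpha$, on each side, and $A_\alpha$ is by definition the largest regular neighborhood of that geodesic still homeomorphic to an annulus, so beyond $\partial A_\alpha$ any enlargement must run into a zero of $q$ or overlap itself. A length--area estimate in the $|q|$--metric then shows that an annulus reaching past $\partial A_\alpha$ collects only $O(1)$ additional modulus, which is absorbed since $\alpha$ short forces $\Mod_x(A_\alpha)$ to be large. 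This is exactly the content of Minsky's product--regions description of $\calT(S)$ together with his regular/expanding annulus estimates in \cite{minsky:HM}, and it is the step I expect to be the main obstacle; the remaining ingredients are either classical or direct computations.

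It remains to evaluate the three summands. The cylinder $F_\alpha$ is genuinely Euclidean: all its core curves are $q$--geodesics of length $\ell_q(\alpha)$ and its height is $f=f_q(\alpha)$, so $\Mod_x(F_\alpha)=f/\ell_q(\alpha)$ exactly. For the expanding annulus $E_\alpha$ I would quote Minsky's estimate from \cite{minsky:HM}: the curves in $E_\alpha$ equidistant to $\alpha$ have length growing at a definite rate as one moves outward, so $E_\alpha$ is conformally trapped between two round annuli whose radii are comparable to $\ell_q(\alpha)$ and to $e=e_q(\alpha)$, giving $\Mod_x(E_\alpha)\emul\log\bigl(e/\ell_q(\alpha)\bigr)$; the same argument for $G_\alpha$ gives $\Mod_x(G_\alpha)\emul\log\bigl(g/\ell_q(\alpha)\bigr)$. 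These three estimates are the ``Furthermore'' part; together with the two displayed inequalities above and the comparison $\Mod_x(A_\alpha)\emul 1/\Ext_x(\alpha)$ from the previous paragraph, they also pin down $\Mod_x(A_\alpha)$ and complete the first comparison.
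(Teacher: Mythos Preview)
The paper does not contain a proof of this lemma: it is quoted from \cite{rafi:LT} and used as a black box. So there is no ``paper's own proof'' to compare against. Your outline is a reasonable sketch of the standard argument, and indeed the ingredients you name---Gr\"otzsch superadditivity for the easy direction, Minsky's regular/expanding annulus estimates from \cite{minsky:HM} for the modulus computations, and a length--area bound showing that any embedded annulus homotopic to $\alpha$ has modulus comparable to $\Mod_x(A_\alpha)$---are exactly what underlies the result in \cite{rafi:LT} and \cite{minsky:HM}. One small correction: the identity $\Ext_x(\alpha)\cdot\Mod_x(\alpha)=1$ is not exact in general (the supremum of moduli over embedded annuli need not equal the reciprocal of extremal length on the nose); what holds is a two-sided comparability, which is all you need and all that the $\emul$ in the statement asserts.
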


Let $\gamma$ be a curve. The restriction 
$\gamma|_{A_\alpha}$ is the set of arcs obtained from restricting the 
$q$--geodesic representative of $\gamma$ to $A_\alpha$, and 
$\ell_q(\gamma|_{A_\alpha})$ is the sum of the $q$--lengths of these
curves. 

\begin{lemma} \label{Lem:Restriction}
For the thick-thin decomposition $(\calA, \calY)$ of $q$, we have
$$
\ell_q(\gamma) \emul \sum_{Y \in \calY} \ell_q(\gamma|_Y)
+ \sum_{\alpha \in \calA} \ell_q(\gamma|_{A_\alpha}).
$$
\end{lemma}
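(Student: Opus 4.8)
Lemma~\ref{Lem:Restriction} will follow from the two inequalities implicit in $\emul$, each of which is elementary once the right geometric picture is set up. The plan rests on three facts. First, the flat surface $q$ decomposes, up to a set of zero area (the boundary geodesics and the cone points), as
\[
S \;=\; \Big( \bigcup_{Y \in \calY} Y_q \Big) \;\cup\; \Big( \bigcup_{\alpha \in \calA} F_\alpha \Big),
\]
which is the defining property of the thick--thin decomposition of $q$ (see \cite{rafi:SC}): the subsurfaces $Y_q$ are precisely the complementary components of the open flat cylinders, and they have pairwise disjoint interiors. Second, $F_\alpha \subseteq A_\alpha$ by definition of $A_\alpha = E_\alpha\cup F_\alpha\cup G_\alpha$. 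Third, writing $\gamma^*$ for a fixed $q$--geodesic representative of $\gamma$, one has $\ell_q(\gamma|_Y) = \ell_q(\gamma^*\cap Y_q)$, since the component of the preimage of $Y_q$ in the cover $\tY$ used to define $\ell_q(\gamma|_Y)$ maps isometrically onto $Y_q$.

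For the upper bound I would restrict $\gamma^*$ to the pieces of the decomposition above. Since these pieces tile $S$ up to a null set, the lengths of the restrictions add to $\ell_q(\gamma)$, so
\[
\ell_q(\gamma) \;=\; \sum_{Y \in \calY}\ell_q(\gamma^*\cap Y_q) \;+\; \sum_{\alpha \in \calA}\ell_q(\gamma^*\cap F_\alpha).
\]
By the third fact the first sum is $\sum_Y \ell_q(\gamma|_Y)$, and by the inclusion $F_\alpha\subseteq A_\alpha$ each term of the second sum is at most $\ell_q(\gamma^*\cap A_\alpha)=\ell_q(\gamma|_{A_\alpha})$. Hence $\ell_q(\gamma)\le\sum_Y\ell_q(\gamma|_Y)+\sum_\alpha\ell_q(\gamma|_{A_\alpha})$.

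For the reverse inequality, the subsurfaces $Y_q$ have disjoint interiors, so the restrictions $\gamma^*\cap Y_q$ are essentially disjoint arcs of $\gamma^*$ and $\sum_{Y}\ell_q(\gamma|_Y)=\sum_Y\ell_q(\gamma^*\cap Y_q)\le\ell_q(\gamma)$. For the annular terms there is no disjointness---the expanding annuli $E_\alpha,G_\alpha$ protrude into neighbouring thick pieces and distinct $A_\alpha$ may overlap---but no disjointness is needed: $\calA$ is a collection of pairwise disjoint, pairwise non-homotopic simple closed curves, so $|\calA|=O(1)$, and therefore $\sum_{\alpha\in\calA}\ell_q(\gamma|_{A_\alpha})=\sum_{\alpha\in\calA}\ell_q(\gamma^*\cap A_\alpha)\le |\calA|\cdot\ell_q(\gamma^*)=O(1)\,\ell_q(\gamma)$. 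Adding the two estimates gives $\sum_Y\ell_q(\gamma|_Y)+\sum_\alpha\ell_q(\gamma|_{A_\alpha})=O(1)\,\ell_q(\gamma)$, which is the remaining half of $\emul$.

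The only part that is not completely mechanical is the bookkeeping behind the first paragraph: one must check that the decomposition of $S$ is genuinely exhaustive up to measure zero even when some $Y_q$ degenerate (in which case both $\ell_q(\gamma|_Y)$ and $\ell_q(\gamma^*\cap Y_q)$ vanish and that portion of $\gamma^*$ is carried by the flat cylinders), and that $\ell_q(\gamma|_Y)$, defined via the cover, really equals $\ell_q(\gamma^*\cap Y_q)$. The latter requires noting that any arc of $\gamma^*\cap Y_q$ peripheral in $Y_q$---the arcs discarded in passing to $\gamma|_Y$---must actually lie on $\partial Y_q$, because a bigon bounded by a geodesic arc and a geodesic boundary segment in the nonpositively curved metric $|q|$ is degenerate; such arcs therefore contribute zero length. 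I would either carry out this two-line argument or simply cite \cite{rafi:SC,rafi:TT} for the structure of the decomposition.
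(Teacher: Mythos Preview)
Your proof is correct and follows essentially the same approach as the paper's: both directions use that $|\calA|$ and $|\calY|$ are uniformly bounded for $\gmul$, and that the pieces $Y_q$ together with the annuli cover $S$ for $\lmul$. The only minor difference is that for the latter direction you use the finer partition $\bigcup Y_q \cup \bigcup F_\alpha$ and then invoke $F_\alpha\subseteq A_\alpha$, whereas the paper simply notes that every segment of the $q$--geodesic lies in some $Y_q$ or some $A_\alpha$; your extra paragraph about peripheral arcs is more careful than needed, since the paper's definition of $\ell_q(\gamma|_Y)$ is already the $\tilde q$--length of $\tilde\gamma\cap\tY_q$ with $\tY_q\to Y_q$ an isometry.
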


\begin{proof}
The annuli $A_\alpha$ are not necessarily disjoint. But, the size of $\calA$ is 
uniformly bounded and $\ell_q(\gamma) \geq \ell_q(\gamma|_{A_\alpha})$. 
Similarly, the size of $\calY$ is uniformly bounded and 
$\ell_q(\gamma) \geq \ell_q(\gamma|_Y)$. Hence 
\begin{equation} \label{Eq:Annuli-Sum}
\ell_q(\gamma) \gmul  \sum_{Y \in \calY} \ell_q(\gamma|_Y) + 
\sum_{\alpha \in \calA} \ell_q(\gamma|_{A_\alpha}).
\end{equation}

To see the inequality in the other direction, we note that every segment
in the $q$--geodesic representative of $\gamma$ is either contained in some
$A_\alpha$, $\alpha \in \calA$ or in some $Y_q$, $Y \in \calY$. 
\end{proof}

\subsection{\Teich geodesics}
Let $q=q(z)dz^2$ be a quadratic differential on $x$. It is more
convenient to use the \emph{natural parameter} $\zeta=\xi+i\eta$, 
which is defined as
$$\zeta(w)=\int_{z_0}^{w}\sqrt{q(z)}\, dz.$$
In these coordinates, we have $q=d\zeta^2$. 
The lines $\xi=const$ with transverse measure $|d\xi|$ define the 
\textit{vertical} measured foliation, associated to $q$. Similarly, 
the \textit{horizontal} measured foliation is defined by $\eta=const$  and 
$|d\eta|$.  The transverse measure of an arc $\alpha$ with respect to 
$|d\xi|$, denoted by $h_q(\alpha)$, is called the \textit{horizontal length} of 
$\alpha$. Similarly, the \textit{vertical length} $v_q(\alpha)$ is the measure 
of $\alpha$ with respect to $|d\eta|$.

A \Teich geodesic can be described as follows. Given a Riemann surface
$x$ and a quadratic differential $q$ on $x$, we can obtain a
$1$--parameter family of quadratic differentials $q_t$ from $q$ 
so that, for $t\in \mathbb R$, if $\zeta=\xi+i\eta$ are natural coordinates for $q$, 
then $\zeta_t=e^{-t} \xi+i e^t \eta$ are natural coordinates for $q_t$. 
Let  $x_t$ be the conformal structure associated to $q_t$. Then 
$\calG:\mathbb R\to \calT(S)$ which sends $t$ to $x_t$, is a \Teich geodesic.

Let $\calG \from [a,b] \to \calT(S)$ be a \Teich geodesic and $q_a$ and $q_b$
be the initial and terminal quadratic differentials.  We use $\ell_{a}(\param)$ for 
$q_a$--length of a curve, we use $\Ext_a(\param)$ for the extremal length of a 
curve in $q_a$. Similarly, we denote by $\Mod_a(\param)$  the modulus of an 
annulus in $q_a$. We denote the thick thin decomposition of $q_a$ by 
$(\calA_a, \calY_b)$. We also write $e_a(\alpha), d_a(\alpha)$, $f_a(\alpha)$ 
and $\ell_a(\alpha)$ in place of $e_{q_a}(\alpha), d_{q_a}(\alpha)$, 
$f_{q_a}(\alpha)$ and $\ell_{q_a}(\alpha)$. When 
the curve $\alpha$ is fixed, we simplify notation even further and use $e_a$, 
$d_a$, $f_a$ and $\ell_a$. Also, we denote the flat annulus and the
expanding annuli corresponding to $\alpha$ in $q_a$ by
$F_\alpha^a$, $E_\alpha^a$ and $G_\alpha^a$, or by $F^a$, $E^a$, and
$G^a$ when $\alpha$ is fixed.
Similar notation applies to $q_b$. The following technical statement will be 
useful later.

\begin{corollary} \label{Cor:Ext/Length}
Let $\alpha$ be a curve in the intersection of $\calA_a$ and $\calA_b$.
Then 
$$
\frac{\Ext_a(\alpha)}{ \ell_{a}(\alpha)} \lmul 
e^{(b-a)} \frac{\Ext_b(\alpha)}{ \ell_b(\alpha)} 
$$
\end{corollary}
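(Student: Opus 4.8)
The plan is to prove the equivalent inequality
\begin{equation}\label{Eq:Reform}
\frac{\ell_b(\alpha)}{\Ext_b(\alpha)}\ \lmul\ e^{b-a}\,\frac{\ell_a(\alpha)}{\Ext_a(\alpha)},
\end{equation}
obtained from the desired one by multiplying by $\ell_a(\alpha)\ell_b(\alpha)/\bigl(\Ext_a(\alpha)\Ext_b(\alpha)\bigr)$; informally, \eqref{Eq:Reform} says that $\log\bigl(\ell_t(\alpha)/\Ext_t(\alpha)\bigr)$ grows at rate at most one as $t$ runs from $a$ to $b$. Since $\calG$ is realised by the affine map $(x_a,q_a)\to(x_b,q_b)$ that scales horizontal $q$--lengths by $e^{-(b-a)}$ and vertical $q$--lengths by $e^{b-a}$, pushing the $q_a$--geodesic representative of $\alpha$ forward and then passing to the $q_b$--geodesic representative gives at once
\begin{equation}\label{Eq:LengthLip}
\ell_b(\alpha)\ \le\ e^{b-a}\,\ell_a(\alpha)
\end{equation}
(and, symmetrically, $\ell_a(\alpha)\le e^{b-a}\ell_b(\alpha)$). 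Combining \eqref{Eq:LengthLip} with the quasiconformal estimate $\Ext_b(\alpha)\ge e^{-2(b-a)}\Ext_a(\alpha)$ would yield only $\ell_b(\alpha)/\Ext_b(\alpha)\lmul e^{3(b-a)}\ell_a(\alpha)/\Ext_a(\alpha)$; the two surplus powers must be saved by exploiting that a large change of $\Ext_t(\alpha)$ is accompanied by a change of $\ell_t(\alpha)$.

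First I would dispose of the easy case. If $\Ext_b(\alpha)\ge\Ext_a(\alpha)$, then by \eqref{Eq:LengthLip}
\[
\frac{\ell_b(\alpha)}{\Ext_b(\alpha)}\ \le\ \frac{e^{b-a}\,\ell_a(\alpha)}{\Ext_b(\alpha)}\ \le\ e^{b-a}\,\frac{\ell_a(\alpha)}{\Ext_a(\alpha)},
\]
which is \eqref{Eq:Reform}. It therefore remains to treat the case $\Ext_b(\alpha)<\Ext_a(\alpha)$, in which the extremal length of $\alpha$ drops from $a$ to $b$.

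For that case I would use \lemref{Lem:EFE} at both endpoints: writing $e_t=e_{q_t}(\alpha)$, $f_t=f_{q_t}(\alpha)$, $g_t=g_{q_t}(\alpha)$ for the widths of the expanding and flat parts of the collar of $\alpha$ in $q_t$, one has, for $t\in\{a,b\}$,
\[
\frac{\ell_t(\alpha)}{\Ext_t(\alpha)}\ \emul\ \ell_t(\alpha)\log\frac{e_t}{\ell_t(\alpha)}\ +\ f_t\ +\ \ell_t(\alpha)\log\frac{g_t}{\ell_t(\alpha)}.
\]
Thus \eqref{Eq:Reform} reduces to bounding, for $t=b$, each of these three terms by $e^{b-a}$ times the entire right--hand side for $t=a$. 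Here I would invoke the description of a \Teich geodesic from \cite{rafi:SC,rafi:CM,rafi:LT}: because $\alpha\in\calA_a\cap\calA_b$, the curve $\alpha$ stays short on an interval containing $[a,b]$, and along that interval $\ell_t(\alpha)$, $f_t$, $e_t$ and $g_t$ vary in a controlled, essentially piecewise--exponential way governed by the horizontal and vertical lengths of $\alpha$ in $q_t$, which themselves scale exponentially in $t$. Substituting these into the comparison above, one checks that $\log\bigl(\ell_t(\alpha)/\Ext_t(\alpha)\bigr)$ indeed moves at rate at most one, so its increase from $a$ to $b$ is $\le(b-a)+O(1)$. The additive $O(1)$, and the additive $O(b-a)$ errors produced when the widths $e_b,g_b$ are compared with their counterparts at time $a$ through the affine map (each $q$--distance being distorted by at most $e^{b-a}$), are absorbed using $b-a\le e^{b-a}$ together with $\ell_a(\alpha)/\Ext_a(\alpha)\gmul\ell_a(\alpha)$, the latter holding because $\Ext_a(\alpha)\lmul 1$ for the short curve $\alpha$.

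The main obstacle is exactly this last step. The thick--thin decomposition of $q_t$ changes with $t$, so the flat cylinder and the two expanding annuli of $\alpha$ at time $b$ are not the affine images of those at time $a$: the cylinder may be degenerate at one of the two times while carrying almost the whole collar modulus at the other, and the expanding annuli can spread across several thick pieces as time passes. What one needs --- and what the cited descriptions of a \Teich geodesic supply --- is the quantitative coupling that precisely when $\Ext_t(\alpha)$ falls by the maximal factor $e^{2(b-a)}$, the flat length $\ell_t(\alpha)$ also falls, by a factor $e^{-(b-a)}$; this is what turns the crude exponent $3(b-a)$ into the sharp $b-a$. The same machinery also justifies the fact, tacitly used above, that $\alpha$ remains short on all of $[a,b]$ rather than only at its endpoints.
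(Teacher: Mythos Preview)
Your setup matches the paper's: you reformulate as $\ell_b/\Ext_b\lmul e^{b-a}\,\ell_a/\Ext_a$ and expand $\ell_t/\Ext_t$ via \lemref{Lem:EFE} as $\ell_t\log(e_t/\ell_t)+f_t+\ell_t\log(g_t/\ell_t)$. The gap is that you then stop computing. Where you write ``Substituting these into the comparison above, one checks\ldots'' and defer to \cite{rafi:SC,rafi:CM,rafi:LT}, the paper simply bounds each of the three terms at $b$ by $e^{b-a}$ times the \emph{same} term at $a$, using only the single input that each of $e_t,f_t,g_t,\ell_t$ changes by a factor at most $e^{b-a}$ between $a$ and $b$. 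For the flat term this is immediate ($f_b\le e^{b-a}f_a$). For the expanding terms, with $k=\ell_b/\ell_a\le e^{b-a}$ and $c=e^{b-a}/k\ge1$, one has $e_b/\ell_b\le c\,(e_a/\ell_a)$; since $\log(cx)\le c\log x$ whenever $c\ge1$ and $x=e_a/\ell_a$ is bounded below (as $\alpha\in\calA_a$), this gives $k\,\ell_a\log(e_b/\ell_b)\le kc\,\ell_a\log(e_a/\ell_a)=e^{b-a}\ell_a\log(e_a/\ell_a)$. Summing the three term-by-term bounds finishes.

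So the machinery you invoke---the case split on whether $\Ext_b\gtrless\Ext_a$, the ``piecewise--exponential'' description of $\ell_t$, the ``quantitative coupling'' between drops of $\Ext_t$ and $\ell_t$, and the fact that $\alpha$ stays short on all of $[a,b]$---is not needed. The statement concerns only the two endpoints, and the proof never leaves them. Your paragraph on ``the main obstacle'' is describing a harder problem than the one actually posed; the obstacle dissolves once you compare term by term rather than trying to control $\Ext_t$ globally.
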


\begin{proof}
The length of an arc along a \Teich geodesic changes at most exponentially fast.
That is, $e^{b-a}$ is and upper-bound for $\frac{e_b}{e_a}$,
$\frac{f_b}{f_a}$, $\frac{g_b}{g_a}$ and 
$\frac{\ell_b}{\ell_a}$. Let $k=\frac{\ell_b}{\ell_a}$.
Then
$$
\frac{\ell_b\Mod_b(E^b)}{\ell_a\Mod_a(E^a)} \emul 
k \, \frac{\log\frac{e_b}{\ell_b}}{\log\frac{e_a}{\ell_a}} \leq 
k \, \frac{\log \left( {\frac{e^{b-a}}k \frac{e_a}{\ell_a}}\right)}{\log\frac{e_a}{\ell_a}}\leq 
k \frac{ \frac{e^{b-a}}k \log \frac{e_a}{\ell_a} }{\log\frac{e_a}{\ell_a}}\leq e^{b-a}.
$$
By a similar argument, 
$$\frac{\ell_b\Mod_b(G^b)}{\ell_a\Mod_a(G^a)}\lmul e^{b-a}$$
We also have 
$$
\frac{\ell_b\Mod_b(F^b)}{\ell_a\Mod_a(F^a)}\emul \frac{f_b}{f_a}\leq e^{b-a}.
$$
Then, by \lemref{Lem:EFE} and the estimates above, 
$$
\frac{\Ext_a}{\ell_a}\div \frac{\Ext_b}{\ell_b}\emul 
\frac{\ell_b\big( \Mod_b(E^b)+\Mod_b(F^b)+\Mod_b(G^b)\big)}
       {\ell_a\big( \Mod_a(E^a)+\Mod_a(F^a)+\Mod_a(G^a)\big)}\lmul e^{b-a},
$$
which is the desired inequality. 
\end{proof}

\subsection{Twisting}
 In this section we define several notions of twisting and discuss 
how they relate to each other. First, consider an annulus $A$
with core curve $\alpha$ and let $\tilde \beta$ and $\tilde \gamma$ be homotopy 
classes of arcs connecting the boundaries of $A$ (here, homotopy is relative to 
the end points of an arc).  The relative twisting of $\tilde \beta$ and $\tilde \gamma$ 
around $\alpha$, $\twist_\alpha(\tilde \beta, \tilde \gamma )$, is defined to be the 
geometric intersection number between $\tilde \beta$ and $\tilde \gamma$. 
If $\alpha$  is a curve on a surface $S$ and $\beta$ and $\gamma$ are two 
transverse curves to $\alpha$ we lift $\beta$ and $\gamma$ to the annular 
cover $\tilde S_\alpha$ of $S$ corresponding to $\alpha$. The curve $\beta$ 
(resp., $\gamma$ ) has at least one lift $\tilde \beta$ (resp., $\tilde \gamma$) 
that connects the boundaries of $\tilde S_\alpha$.  We define  $\twist_\alpha 
(\beta, \gamma)$ to be $\twist(\tilde \beta, \tilde \gamma)$. 
This is well defined up to a small additive error (\cite[\S 3]{minsky:PR}).

When the surface $S$ is equipped with a metric, one can ask
how many times does the geodesic representative of $\gamma$
twist around a curve $\alpha$. However, this needs to be made precise. 
When $x$ is a Riemann surface we define $\twist_\alpha(x, \gamma)$
to be equal to $\twist_\alpha(\beta, \gamma)$ where $\beta$
is the shortest geodesic in $x$ intersecting $\alpha$. For 
a quadratic differential $q$, the definition is slightly different. 
We first consider $F_\alpha$ and let $\beta$ be an arc connecting the 
boundaries of $F_\alpha$ that is perpendicular to the boundaries. 
We then define $\twist_\alpha(q, \gamma)$ to be the geometric
intersection number between $\beta$ and $\gamma|_{F_\alpha}$.
These two notions of twisting are related as follows:

\begin{theorem}[Theorem 4.3 in \cite{rafi:CM}] \label{Thm:TwoTwistings}
Let $q$ be a quadratic differential in the conformal class of $x$, and let 
$\alpha$ and $\gamma$ be two intersecting curves, then
$$
\big |\twist_\alpha (q, \gamma) - \twist_\alpha(x, \gamma) \big| 
  \lmul \frac{1}{\Ext_x(\alpha)}.
$$
\end{theorem}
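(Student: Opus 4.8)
The plan is to compare both twisting numbers to a third, more geometric quantity measured in the annulus $A_\alpha$, and to use the relationship between hyperbolic and flat geometry inside $A_\alpha$ supplied by \lemref{Lem:EFE}. Recall that $A_\alpha = E_\alpha \cup F_\alpha \cup G_\alpha$, that the twist $\twist_\alpha(q,\gamma)$ is defined by counting intersections of $\gamma|_{F_\alpha}$ with an arc perpendicular to the boundaries of $F_\alpha$, and that the twist $\twist_\alpha(x,\gamma)$ is (up to bounded additive error) the relative twisting between a lift of $\gamma$ and a lift of the shortest $x$-geodesic $\beta$ crossing $\alpha$ in the annular cover $\tilde S_\alpha$. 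First I would observe that both of these can be computed in $\tilde S_\alpha$, and that the relevant comparison is between the ``horizontal displacement'' that $\gamma$ accumulates while crossing $A_\alpha$ and the horizontal displacement it accumulates while crossing only the flat part $F_\alpha$. In other words, the discrepancy $|\twist_\alpha(q,\gamma)-\twist_\alpha(x,\gamma)|$ is governed, up to multiplicative constants, by how much a geodesic arc can twist around $\alpha$ while traversing the two expanding annuli $E_\alpha$ and $G_\alpha$.

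The key step, then, is to bound the twisting picked up in an expanding annulus. I would use the standard fact (from \cite{minsky:HM}, cf.\ the discussion around \lemref{Lem:EFE}) that in an expanding annulus of modulus $M$, any $q$-geodesic arc connecting the two boundaries twists around the core at most a bounded multiple of $M$ times: the equidistant curves grow in length, so a geodesic crossing the annulus cannot spiral more than $O(\Mod_x(E_\alpha))$ times without failing to be a geodesic. Applying this to $E_\alpha$ and to $G_\alpha$, and summing, gives
$$
\big|\twist_\alpha(q,\gamma) - \twist_\alpha(x,\gamma)\big|
\lmul \Mod_x(E_\alpha) + \Mod_x(G_\alpha) + O(1)
\lmul \Mod_x(E_\alpha) + \Mod_x(F_\alpha) + \Mod_x(G_\alpha),
$$
and by \lemref{Lem:EFE} the right-hand side is comparable to $1/\Ext_x(\alpha)$, which is exactly the claimed bound. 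The additive $O(1)$ is harmless since it is itself $\lmul 1/\Ext_x(\alpha)$ when $\alpha$ is short, and when $\alpha$ is not short $1/\Ext_x(\alpha)$ is already bounded below so the whole statement is vacuous up to adjusting constants.

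What remains, and where I expect the real work to be, is justifying that the hyperbolic twist $\twist_\alpha(x,\gamma)$ and the flat twist $\twist_\alpha(q,\gamma)$ really do agree up to the twisting contributed by the expanding collars — i.e.\ that, modulo $E_\alpha$ and $G_\alpha$, the shortest hyperbolic transversal $\beta$ and the flat perpendicular arc to $F_\alpha$ determine the same relative homotopy class of arc across the thick part. This is a matter of chasing both curves into the annular cover $\tilde S_\alpha$, noting that outside $A_\alpha$ the surface is thick so $\gamma$ crosses it in a uniformly bounded way (hence contributes only $O(1)$ twisting there), and that the hyperbolic collar of $\alpha$ is comparable to $A_\alpha$ by \thmref{Thm:q-Length}-type estimates. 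The main obstacle is bookkeeping the various additive errors — the $O(1)$ ambiguity in the definition of $\twist_\alpha(\beta,\gamma)$, the bounded-crossing estimate in the thick part, and the constants in the expanding-annulus twisting bound — and checking that each is absorbed into the $\lmul 1/\Ext_x(\alpha)$ bound. Since all of these pieces, and the expanding-annulus spiraling estimate in particular, are established in \cite{minsky:HM} and \cite{minsky:PR}, I would present this as a synthesis of those results rather than reproving them from scratch.
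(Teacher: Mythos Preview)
The paper does not prove this theorem: it is quoted verbatim as Theorem~4.3 of \cite{rafi:CM} and used as a black box, so there is no in-paper proof to compare against. Your sketch is in the right spirit and broadly matches the argument in the cited reference---the discrepancy between $\twist_\alpha(q,\gamma)$ and $\twist_\alpha(x,\gamma)$ is indeed controlled by the twisting a geodesic can accumulate in the expanding pieces $E_\alpha$, $G_\alpha$, which is $O(\Mod_x(E_\alpha)+\Mod_x(G_\alpha))$, and \lemref{Lem:EFE} then converts this into $O(1/\Ext_x(\alpha))$. The part you correctly flag as the crux---showing that the hyperbolic shortest transversal $\beta$ and the flat perpendicular in $F_\alpha$ differ in the annular cover by an amount absorbed into the same bound---is exactly where the work in \cite{rafi:CM} sits, and your plan of chasing everything into $\tilde S_\alpha$ and invoking the thick-part bounded-crossing estimate is the right one.
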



\section{An Estimate for the Extremal Length} \label{Sec:Extremal}
In \cite{minsky:PR}, Minsky has shown that the extremal length of a curve is comparable 
to the maximum of the contributions to the extremal length from the pieces 
of the thick-thin decomposition of the surface. Using this fact and some results 
in \cite{rafi:TT} and \cite{rafi:CM} we can state a similar result relating the 
flat length of a curve $\gamma$ to its extremal length. 

\begin{theorem} \label{Thm:LengthEstimate}
For a quadratic differential $q$ on a Riemann surface $x$, the corresponding 
thick-thin decomposition $(\calA, \calY)$ and a curve $\gamma$ on $x$,
we have
$$
\Ext_x(\gamma) \emul 
\sum_{Y \in \calY} \frac{\ell_q(\gamma|_Y)^2}{\diam_q(Y)^2} + 
\sum_{\alpha \in \calA} \left( \frac 1{\Ext_x(\alpha)}
     + \twist_\alpha^2(q,\gamma)\Ext_x(\alpha)  \right) \I(\alpha, \gamma)^2.
$$
\end{theorem}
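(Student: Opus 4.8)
The plan is to deduce the estimate from Minsky's extremal length estimate in \cite{minsky:PR} and then rewrite the combinatorial and hyperbolic data appearing there in terms of the flat quantities $\ell_q(\gamma|_Y)$, $\diam_q(Y)$ and $\twist_\alpha(q,\gamma)$, using Proposition~\ref{Prop:q--length} and Theorem~\ref{Thm:TwoTwistings}. No new geometric input is required; the work is entirely in tracking comparison constants, all of which depend only on the topology of $S$.

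First I would recall Minsky's theorem, which in the notation of this paper reads
$$
\Ext_x(\gamma)\ \emul\ \max\!\Big(\,\max_{Y\in\calY}\Ext_{Y_x}(\gamma|_Y),\ \ \max_{\alpha\in\calA}\big(\tfrac{1}{\Ext_x(\alpha)}+\Ext_x(\alpha)\,\twist_\alpha(x,\gamma)^2\big)\I(\alpha,\gamma)^2\,\Big),
$$
where $\Ext_{Y_x}(\gamma|_Y)$ is the extremal length in $Y_x$ of the arc system $\gamma|_Y$, and the annular term comes from the Euclidean cylinder model after identifying the modulus of the collar of $\alpha$ with $1/\Ext_x(\alpha)$. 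Since $|\calA|$ and $|\calY|$ are bounded in terms of the topology of $S$, the outer maximum can be replaced by $\sum_{Y\in\calY}+\sum_{\alpha\in\calA}$ at the cost of a multiplicative constant, and I would work with this sum form from now on.

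Next I would treat the two kinds of terms. For a thick piece $Y$, the surface $Y_x$ has injectivity radius bounded below and $\mu_Y$ is a bounded-length filling marking, so Minsky's estimate in the thick case gives $\Ext_{Y_x}(\gamma|_Y)\emul\I(\gamma,\mu_Y)^2$; feeding in Proposition~\ref{Prop:q--length} together with the elementary equivalence $(s+t)^2\emul s^2+t^2$ for $s,t\ge 0$ turns this into $\Ext_{Y_x}(\gamma|_Y)\emul\frac{\ell_q(\gamma|_Y)^2}{\diam_q(Y)^2}+\I(\gamma,\partial Y)^2$. For an annulus with core $\alpha$, Theorem~\ref{Thm:TwoTwistings} gives $|\twist_\alpha(q,\gamma)-\twist_\alpha(x,\gamma)|\lmul 1/\Ext_x(\alpha)$, hence $\twist_\alpha(q,\gamma)^2\lmul\twist_\alpha(x,\gamma)^2+1/\Ext_x(\alpha)^2$ and symmetrically; multiplying by $\Ext_x(\alpha)$ and retaining the summand $1/\Ext_x(\alpha)$ that is already present on both sides yields
$$
\tfrac{1}{\Ext_x(\alpha)}+\Ext_x(\alpha)\,\twist_\alpha(q,\gamma)^2\ \emul\ \tfrac{1}{\Ext_x(\alpha)}+\Ext_x(\alpha)\,\twist_\alpha(x,\gamma)^2 .
$$
Thus the annular part of Minsky's sum is comparable to the annular sum in the statement, while the thick part differs from $\sum_Y\frac{\ell_q(\gamma|_Y)^2}{\diam_q(Y)^2}$ only by a bounded family of terms $\I(\gamma,\partial Y)^2$.

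Finally I would dispose of those boundary terms and collect everything. Each component $\alpha$ of $\partial Y$ lies in $\calA$ and is therefore $\ep_1$-short, so its hyperbolic collar has large modulus and $\Ext_x(\alpha)\lmul 1$; hence the $\alpha$-summand of the annular sum is $\gmul\I(\alpha,\gamma)^2$, and since $\partial Y$ has boundedly many components, $\I(\gamma,\partial Y)^2\lmul\sum_{\alpha\subset\partial Y}\I(\alpha,\gamma)^2$ is dominated by the annular sum in the statement. In the other direction, that annular sum is, up to comparison, one block of the sum form of Minsky's estimate, and $\frac{\ell_q(\gamma|_Y)^2}{\diam_q(Y)^2}\lmul\I(\gamma,\mu_Y)^2\emul\Ext_{Y_x}(\gamma|_Y)$ takes care of the thick part; assembling the two directions yields the asserted comparison. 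I expect the main difficulty to be exactly this round trip between $\I(\gamma,\mu_Y)$ and $\ell_q(\gamma|_Y)/\diam_q(Y)$: Proposition~\ref{Prop:q--length} unavoidably produces the term $\I(\gamma,\partial Y)$, which genuinely dominates when $Y_q$ is degenerate and $\ell_q(\gamma|_Y)=0$, so one must check with care, in both directions, that it is absorbed by the neighbouring annular contributions with nothing lost or double counted.
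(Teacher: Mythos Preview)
Your proposal is correct and follows essentially the same route as the paper: invoke Minsky's estimate in the sum form, use Theorem~\ref{Thm:TwoTwistings} to swap $\twist_\alpha(x,\gamma)$ for $\twist_\alpha(q,\gamma)$ in the annular terms, and use Proposition~\ref{Prop:q--length} to convert $\I(\gamma,\mu_Y)^2$ into $\ell_q(\gamma|_Y)^2/\diam_q(Y)^2$ plus a boundary term $\I(\gamma,\partial Y)^2$ that is then absorbed by the $\I(\alpha,\gamma)^2/\Ext_x(\alpha)$ summands since $\Ext_x(\alpha)$ is small. The paper's proof differs only in cosmetic details of how the twist comparison and the absorption of the boundary term are phrased.
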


\begin{proof}
First we recall \cite[Theorem 5.1]{minsky:PR} where Minsky states
that the extremal length of a curve $\gamma$ in $x$ is the maximum
of the contributions to the extremal length from each thick subsurface
and from crossing each short curve. The contribution from each curve 
$\alpha \in \calA$ is given by an expression \cite[Equation (4.3)]{minsky:PR} 
involving the $\I(\alpha, \gamma)$, $\twist_\alpha(x,\gamma)$ and
$\Ext_x(\alpha)$. For each subsurface $Y\in \calY$, the contribution 
to the extremal length from $\gamma|_Y$ is shown to be  
\cite[Theorem 4.3]{minsky:PR} the square of the hyperbolic length of $\gamma$ 
restricted to a representative of $Y$ with a horocycle boundaries of a fixed length
in $x$. This is known to be comparable to the square of the intersection 
number of $\gamma$ with a short marking $\mu_Y$ for $Y$.  

To be more precise, let $\mu_Y$ be a set of curves in $Y$ that fill $Y$
so that $\ell_x(\mu_Y) =O(1)$. Then, Minsky's estimate can be written as
\begin{equation} \label{Eq:Minsky}
\begin{split}
\Ext_x(\gamma) \emul 
 &\sum_{Y \in \calY} \I(\gamma, \mu_Y)^2+  \\
 &\sum_{\alpha \in \calA} \left( \frac{1}{\Ext_x(\alpha)}
     + \twist_\alpha^2(x,\gamma)\Ext_x(\alpha)  \right) \I(\alpha, \gamma)^2.
\end{split}     
\end{equation}
From \thmref{Thm:TwoTwistings},
$$
\big|\twist_\alpha(x,\gamma) - \twist_\alpha(q, \gamma) \big| =
O\left(\frac 1{\Ext_x(\alpha)}\right),
$$
and hence, 
$$
1+ \twist_\alpha(x,\gamma)\Ext_x(\alpha) \emul 
1+ \twist_\alpha(q,\gamma)\Ext_x(\alpha).
$$
Squaring both sides, and using $(a+b)^2 \emul a^2 + b^2$, we get
$$
1 + \twist_\alpha^2(x,\gamma)\Ext_x(\alpha)^2 \emul 
1+ \twist_\alpha^2(q,\gamma)\Ext_x(\alpha)^2.
$$
We know divide both sides by $\Ext_x(\alpha)$ to obtain
$$
\left(\frac{1}{\Ext_x(\alpha)} + \twist_\alpha^2(x,\gamma)\Ext_x(\alpha) \right)
\emul 
\left( \frac{1}{\Ext_x(\alpha)} + \twist_\alpha^2(q,\gamma)\Ext_x(\alpha) \right).
$$
That is, the second sum in Minsky's estimate is comparable to
the second sum in the statement of our Proposition. 

Now consider  the inequality in \propref{Prop:q--length}
for every $Y \in \calY$. After taking the square and adding up we get
$$
\sum_{Y \in \calY}  \frac{\ell_q(\gamma|_Y)^2}{\diam_q(Y)^2} 
+ \sum_{\alpha \in \calA}  \I(\gamma, \alpha)^2 \emul
\sum_{Y \in \calY} \I(\gamma, \mu_Y)^2
$$
But, the term $\sum_{\alpha \in \calA}  \I(\gamma, \alpha)^2$ is insignificant
compared with the term $  \frac{\I(\gamma, \alpha)^2}{\Ext_x(\alpha)}$
appearing in right side of Equation~ \eqref{Eq:Minsky}. Therefore, we can
replace the term $\sum_{Y \in \calY} \I(\gamma, \mu_Y)^2$ in
Equation~ \eqref{Eq:Minsky} with
 $\sum_{Y \in \calY}  \frac{\ell_q(\gamma|_Y)^2}{\diam_q(Y)^2}$
and obtain the desired inequality. 
\end{proof}


To simplify the situation, one can provide a lower bound for extremal length
using the $q$--length of $\gamma$ and the sizes of the subsurface
$Y_q$, $Y \in \calY$, and $A_\alpha$, $\alpha \in \calA$. 

\begin{corollary} \label{Cor:Length-Only}
For any curve $\gamma$, the contribution to the extremal length of 
$\gamma$ from $A_\alpha$, $\alpha \in \calA$, is bounded
below by $\frac{ \ell_q(\gamma|_{A_\alpha}) ^2}{d_q(\alpha)^2}$.
In other words, 
$$
\Ext_x(\gamma) \gmul
\sum_{Y \in \calY} \frac{\ell_q(\gamma|_Y)^2}{\diam_q(Y)^2} + 
\sum_{\alpha \in \calA}  \frac{ \ell_q(\gamma|_{A_\alpha})^2 }{d_q(\alpha)^2}.
$$
\end{corollary}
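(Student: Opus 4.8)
\noindent
The plan is to deduce \corref{Cor:Length-Only} directly from \thmref{Thm:LengthEstimate}. That theorem expresses $\Ext_x(\gamma)$ as comparable to $\sum_{Y\in\calY}\ell_q(\gamma|_Y)^2/\diam_q(Y)^2$ plus $\sum_{\alpha\in\calA}M_\alpha$, where $M_\alpha=\bigl(\frac 1{\Ext_x(\alpha)}+\twist_\alpha^2(q,\gamma)\Ext_x(\alpha)\bigr)\I(\alpha,\gamma)^2$. Since the $\calY$--sum is exactly the one appearing in the corollary, it suffices to prove, for each $\alpha\in\calA$,
\begin{equation}\label{Eq:Cor-PerAlpha}
\frac{\ell_q(\gamma|_{A_\alpha})^2}{d_q(\alpha)^2}\ \lmul\ M_\alpha,
\end{equation}
and then add \eqref{Eq:Cor-PerAlpha} over $\alpha$ and recombine with the $\calY$--terms. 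When $\I(\alpha,\gamma)\geq 1$ this carries the content of the proof; when $\I(\alpha,\gamma)=0$ the right-hand side of \eqref{Eq:Cor-PerAlpha} vanishes, and there I would argue separately that $\gamma|_{A_\alpha}$ then consists only of arcs that are boundary parallel in $A_\alpha$, that these lie inside the piece $Y_q$ adjacent to $\alpha$, and that their contribution $\ell_q(\gamma|_{A_\alpha})^2/d_q(\alpha)^2$ is dominated by the corresponding $\ell_q(\gamma|_Y)^2/\diam_q(Y)^2$.

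To bound $\ell_q(\gamma|_{A_\alpha})$ I would use that the $q$--geodesic of $\gamma$ meets $A_\alpha=E_\alpha\cup F_\alpha\cup G_\alpha$ in $\emul\I(\alpha,\gamma)$ arcs crossing from one boundary circle of $A_\alpha$ to the other, together with boundary-parallel arcs whose total length is dominated by the crossing ones. Each crossing arc is, rel its endpoints, a shortest representative of its homotopy class in $A_\alpha$ (otherwise $\gamma$ could be shortened), so its $q$--length is governed by how it passes through the three pieces: crossing each expanding annulus $E_\alpha$, $G_\alpha$ costs $\emul e$ and $\emul g$ no matter how it winds (the expanding-annulus estimates of \cite{minsky:HM,rafi:TT}), while crossing the flat cylinder $F_\alpha$ with winding $w$ costs $\emul\sqrt{f^2+w^2\ell_q(\alpha)^2}\leq f+w\,\ell_q(\alpha)$. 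Summing over the crossing arcs, using that the windings in $F_\alpha$ add up to $\emul\twist_\alpha(q,\gamma)$ up to an additive term bounded by $\I(\alpha,\gamma)\,d_q(\alpha)$, and using $d_q(\alpha)\emul e+f+g$, one obtains
\begin{equation}\label{Eq:Cor-ArcBound}
\ell_q(\gamma|_{A_\alpha})\ \lmul\ \I(\alpha,\gamma)\,d_q(\alpha)+\twist_\alpha(q,\gamma)\,\ell_q(\alpha),
\end{equation}
which is the flat-geometric length estimate already recorded in \cite{rafi:TT,rafi:CM}.

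Squaring \eqref{Eq:Cor-ArcBound}, dividing by $d_q(\alpha)^2$, and using $(a+b)^2\emul a^2+b^2$, the inequality \eqref{Eq:Cor-PerAlpha} (for $\I(\alpha,\gamma)\geq 1$) reduces to the two elementary estimates $\Ext_x(\alpha)\lmul 1$ and $\ell_q(\alpha)^2/d_q(\alpha)^2\lmul\Ext_x(\alpha)$. The first holds because $\alpha\in\calA$ means $\ell_x(\alpha)<\ep_1$, so $\Ext_x(\alpha)$ is bounded by a uniform constant. For the second, \lemref{Lem:EFE} gives
$$
\frac 1{\Ext_x(\alpha)}\ \emul\ \log\frac{e}{\ell_q(\alpha)}+\frac f{\ell_q(\alpha)}+\log\frac g{\ell_q(\alpha)},\qquad d_q(\alpha)\emul e+f+g;
$$
since $\alpha\in\calA$ forces $\frac 1{\Ext_x(\alpha)}$ to be bounded below by a positive constant and the two logarithmic terms are nonnegative, one of the three summands is bounded below, which yields $d_q(\alpha)\gmul\ell_q(\alpha)$. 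Applying $\log t\leq t\leq t^2$ for $t\geq 1$ to $t=e/\ell_q(\alpha),f/\ell_q(\alpha),g/\ell_q(\alpha)$ (and handling the subcase $f<\ell_q(\alpha)$ by $f\,\ell_q(\alpha)\lmul d_q(\alpha)^2$) then gives $\frac 1{\Ext_x(\alpha)}\lmul d_q(\alpha)^2/\ell_q(\alpha)^2$, as needed.

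The last paragraph is routine; the real work is \eqref{Eq:Cor-ArcBound}, and the main obstacle is controlling the $q$--geodesic of $\gamma$ inside the expanding annuli $E_\alpha$, $G_\alpha$, where the flat metric is far from a product and a crude bound would destroy the comparison in \eqref{Eq:Cor-PerAlpha}; one also has to dispose of boundary-parallel arcs and, in the same breath, treat the degenerate case $\I(\alpha,\gamma)=0$ flagged above. This is exactly where the estimates of \cite{rafi:TT} and \cite{rafi:CM} are invoked, and everything else is elementary bookkeeping.
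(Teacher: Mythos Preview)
Your approach matches the paper's: bound each crossing arc of $\gamma|_{A_\alpha}$ by $d_q(\alpha)+\twist_\alpha(q,\gamma)\,\ell_q(\alpha)$, sum over the $\I(\alpha,\gamma)$ arcs, square, divide by $d_q(\alpha)^2$, and compare term by term with the $\alpha$--summand in \thmref{Thm:LengthEstimate} using $\Ext_x(\alpha)\lmul 1$ and $\ell_q(\alpha)^2/d_q(\alpha)^2\lmul\Ext_x(\alpha)$. The only real difference is that the paper obtains the arc bound in a single line---each arc is a $q$--geodesic, hence no longer than the obvious representative that crosses $A_\alpha$ once and then winds $\twist_\alpha(q,\gamma)$ times inside $F_\alpha$---so your separate analysis of $E_\alpha,F_\alpha,G_\alpha$ via expanding-annulus estimates is more work than needed and is not ``the real work''; your attention to boundary-parallel arcs and to the case $\I(\alpha,\gamma)=0$ is more scrupulous than the paper but inessential for the coarse inequality.
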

\begin{proof}
Recall the notations $E_\alpha$, $F_\alpha$, $G_\alpha$, $e$
$f$ and $g$ from the background section. Denote the $q$--length
of $\alpha$ by $a$. Every arc of $\gamma|_{A_\alpha}$ has to cross
$A_\alpha$ and twist around $\alpha$, $\twist_\alpha(q, \gamma)$--times.
Hence, its length is less than $d_q(\alpha) + \twist_\alpha(q, \gamma) a$. Therefore,
$$
\ell_q(\gamma|_{A_\alpha})^2 \lmul 
\I(\alpha, \gamma)^2 \big( d_q(\alpha)^2   + \twist^2_\alpha(q, \gamma) a^2\big).
$$
Thus
\begin{align*}
\left( \frac{ \ell_q(\gamma|_{A_\alpha}) }{d_q(\alpha)\I(\alpha, \gamma))}\right)^2 
 & \emul \frac{d_q(\alpha)^{\,2} +  \twist_\alpha^2(q, \gamma) a^2 }{d_q(\alpha)^2}  
     \emul 1 +  \frac{\twist_\alpha^2(q, \gamma) } {d_q(\alpha)^{\,2}/a^2} \\
 & \lmul \frac 1{\Ext_x(\alpha)}
     + \frac{\twist_\alpha^2(q,\gamma)}
       {\log \frac {e}{a} +  \frac {f}{a} +  \log \frac {g}{a}} \\
  & \emul   \frac{1}{\Ext_x(\alpha)}   + \twist_\alpha^2(x,\gamma)\Ext_x(\alpha)  
\end{align*}
We now multiply both sides by $\I^2(\alpha, \gamma)$ and replace the second
term of estimate in \thmref{Thm:LengthEstimate} to obtain the corollary. 

The estimate here seems excessively generous, but there is a case
where the two estimates are comparable. This happens when $\alpha$ is not
very short, the twisting parameter is zero and $\gamma|_{A_\alpha}$ is a set
of $\I(\gamma, \alpha)$--many arcs of length comparable to one. 
\end{proof}

\subsection*{Essentially horizontal curves}
Roughly speaking, a curve $\gamma$ is essentially horizontal if there is an element in the thick-thin decomposition where $\gamma$ is mostly horizontal, and which contributes a definite portion of the extremal length of $\gamma$. One can always find a piece of a surface which  satisfies the latter, according to the following corollary.

\begin{corollary} \label{Cor:Significant}
Let  $(\calA, \calY)$ be a thick-thin decomposition for $q$
and let $\gamma$ be a curve that is not in $\calA$. Then
\begin{enumerate}
\item For every $Y \in \calY$
$$
\Ext_x(\gamma) \gmul \frac{\ell_q(\gamma|_Y)^2}{\diam_q(Y)^2}.
$$
\item For  $\alpha \in \calA$ and a flat annulus $F_{\alpha}$ whose core curve is $\alpha$,
$$ 
\Ext_x(\gamma) \gmul
\frac{\ell_q(\gamma|_{F_\alpha})^2 \Ext_x(\alpha)}{\ell_q(\alpha)^2}.
$$
\item For $\alpha \in \calA$ and an expanding annulus 
$E_\alpha$ whose core curve is $\alpha$, 
$$ 
\Ext_x(\gamma) \gmul \I(\alpha, \gamma)^2 \Mod_x(E_\alpha).
$$
\end{enumerate}
Furthermore, at least one of these inequalities is an equality up to a multiplicative error. 
\end{corollary}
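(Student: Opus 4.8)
The plan is to derive the three lower bounds directly from \thmref{Thm:LengthEstimate} (equivalently, from \corref{Cor:Length-Only}) together with \lemref{Lem:EFE}, and then to prove the final sharpness assertion by tracking which term realizes the maximum in \thmref{Thm:LengthEstimate}. Parts (1) and (3) require essentially no work. Part (1) is simply one of the summands in the lower bound of \corref{Cor:Length-Only}. For part (3), \thmref{Thm:LengthEstimate} gives $\Ext_x(\gamma)\gmul \I(\alpha,\gamma)^2/\Ext_x(\alpha)$, and \lemref{Lem:EFE} gives $1/\Ext_x(\alpha)\gmul\Mod_x(E_\alpha)$ (and likewise with $G_\alpha$), which chains to the stated bound.

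For part (2), I would first record the elementary flat-geometry fact, already used in the proof of \corref{Cor:Length-Only}, that inside the flat cylinder $F_\alpha$ of circumference $\ell_q(\alpha)$ and height $f = f_q(\alpha)$ each of the $\I(\alpha,\gamma)$ crossing arcs of $\gamma$ has $q$--length comparable to $\sqrt{f^2 + \twist_\alpha^2(q,\gamma)\,\ell_q(\alpha)^2}$ (when $\I(\alpha,\gamma)=0$ the restriction $\gamma|_{F_\alpha}$ is empty and (2) is trivial), so that
$$
\frac{\ell_q(\gamma|_{F_\alpha})^2\,\Ext_x(\alpha)}{\ell_q(\alpha)^2}
  \emul \I(\alpha,\gamma)^2\left(\Ext_x(\alpha)\,\frac{f^2}{\ell_q(\alpha)^2} + \twist_\alpha^2(q,\gamma)\,\Ext_x(\alpha)\right).
$$
By \lemref{Lem:EFE}, $\Mod_x(F_\alpha)\emul f/\ell_q(\alpha)$ is one of three moduli whose sum is $\emul 1/\Ext_x(\alpha)$, so $\Ext_x(\alpha)\Mod_x(F_\alpha)\lmul 1$ and hence $\Ext_x(\alpha)f^2/\ell_q(\alpha)^2\emul\Ext_x(\alpha)\Mod_x(F_\alpha)^2\lmul\Mod_x(F_\alpha)\lmul 1/\Ext_x(\alpha)$. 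Feeding this back into \thmref{Thm:LengthEstimate} gives $\Ext_x(\gamma)\gmul \ell_q(\gamma|_{F_\alpha})^2\Ext_x(\alpha)/\ell_q(\alpha)^2$, which is (2).

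For the sharpness claim, note that $|\calA|+|\calY|$ is bounded in terms of the topology of $S$, so the sum in \thmref{Thm:LengthEstimate} is comparable to its largest summand. If that summand is $\ell_q(\gamma|_Y)^2/\diam_q(Y)^2$ for some $Y\in\calY$ we are in case (1). Otherwise $\Ext_x(\gamma)\emul\big(1/\Ext_x(\alpha) + \twist_\alpha^2(q,\gamma)\Ext_x(\alpha)\big)\,\I(\alpha,\gamma)^2$ for some $\alpha\in\calA$, and I would expand $1/\Ext_x(\alpha)\emul\Mod_x(E_\alpha)+\Mod_x(F_\alpha)+\Mod_x(G_\alpha)$ and split into cases according to which of $\Mod_x(E_\alpha)$, $\Mod_x(G_\alpha)$, $\Mod_x(F_\alpha)$, $\twist_\alpha^2(q,\gamma)\Ext_x(\alpha)$ is dominant. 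When $\Mod_x(E_\alpha)$ or $\Mod_x(G_\alpha)$ dominates, (3) is an equality. When $\Mod_x(F_\alpha)$ dominates, one has $1/\Ext_x(\alpha)\emul\Mod_x(F_\alpha)$ and $\twist_\alpha^2(q,\gamma)\ell_q(\alpha)^2\lmul f^2$, so the crossing arcs in $F_\alpha$ have length $\emul\I(\alpha,\gamma)f$ and $\ell_q(\gamma|_{F_\alpha})^2\Ext_x(\alpha)/\ell_q(\alpha)^2\emul\I(\alpha,\gamma)^2/\Ext_x(\alpha)\emul\Ext_x(\gamma)$, making (2) an equality. When the twisting term dominates, $\twist_\alpha^2(q,\gamma)\ell_q(\alpha)^2\gmul f^2$, the arcs have length $\emul\I(\alpha,\gamma)\twist_\alpha(q,\gamma)\ell_q(\alpha)$, and $\ell_q(\gamma|_{F_\alpha})^2\Ext_x(\alpha)/\ell_q(\alpha)^2\emul\I(\alpha,\gamma)^2\twist_\alpha^2(q,\gamma)\Ext_x(\alpha)\emul\Ext_x(\gamma)$, so (2) is again an equality.

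The genuinely routine part is the flat-cylinder length computation. The main obstacle, though it is bookkeeping rather than anything deep, is the sharpness step: for whichever summand of $1/\Ext_x(\alpha)$ (or the twisting term) dominates, one must check that the matching geometric quantity is precisely the one appearing in (2) or (3), and in the $F_\alpha$ case this forces one to pass freely between $\Mod_x(F_\alpha)$, $f/\ell_q(\alpha)$, and the perpendicular or twisted length of the arcs of $\gamma$ in $F_\alpha$.
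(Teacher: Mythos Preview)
Your proposal is correct and follows essentially the same argument as the paper: parts (1) and (3) are read off from \thmref{Thm:LengthEstimate} and \lemref{Lem:EFE}, part (2) uses the same flat-cylinder arc-length estimate $\ell_q(\gamma|_{F_\alpha})^2\lmul(\twist_\alpha^2 a^2+f^2)\I(\alpha,\gamma)^2$ together with $\Ext_x(\alpha)\Mod_x(F_\alpha)^2\lmul 1/\Ext_x(\alpha)$, and the sharpness claim is handled by the same case split on which summand of \thmref{Thm:LengthEstimate} (and, in the annulus case, which of $\Mod_x(E_\alpha),\Mod_x(F_\alpha),\Mod_x(G_\alpha),\twist_\alpha^2\Ext_x(\alpha)$) is dominant. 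The only cosmetic difference is that you unfold the annulus case into four parallel subcases where the paper nests them as two-and-then-two.
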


\begin{proof}
The parts one and three follow immediately from \thmref{Thm:LengthEstimate}.
We prove part two. For $\alpha \in \calA$, let $a= \ell_q(\alpha)$
and let $f=f_q(\alpha)$.
As before, 
\begin{equation} \label{Eq:Triangle}
\ell_q(\gamma|_{F_\alpha})^2
\lmul (\twist_\alpha(q, \gamma)^2 a^2 + f^2)\I(\alpha, \gamma)^2.
\end{equation}
Hence
\begin{align*}
\frac{\ell_q(\gamma|_{F_\alpha})^2 \Ext_x(\alpha)}{\ell_q(\alpha)^2} 
& \lmul \frac{\twist_\alpha(q, \gamma)^2 a^2 + f^2}{a^2}
     \Ext_x(\alpha) \I(\alpha, \gamma)^2 \\
& \lmul \twist_\alpha(q, \gamma)^2 \Ext_x(\alpha) \I(\alpha, \gamma)^2 
+ \Ext_x(\alpha)\Mod_x(F_\alpha)^2\I(\alpha, \gamma)^2.
\end{align*}
But  $\Ext_x(\alpha)\Mod_x(F_\alpha)^2 \leq \frac 1{\Ext_x(\alpha)}$
and thus, by \thmref{Thm:LengthEstimate}, the above expression is bounded 
above by a multiple of $\Ext_x(\gamma)$.

To see that one of the inequalities have to be an equality, we observe that the 
number of pieces in the thick-thin decomposition $(\calA, \calY)$ is uniformly 
bounded. Therefore, some term in \thmref{Thm:LengthEstimate} is 
comparable to $\Ext_x(\gamma)$. If this is a term in the first sum then 
the inequality in part one is an equality. Assume for $\alpha \in \calA$ that
$$
\Ext_x(\gamma) \emul \frac{\I(\alpha, \gamma)^2}{\Ext_x(\alpha)}.
$$
We either have $\Ext_x(E_\alpha) \emul \Ext_x(\alpha)$ or 
$\Ext_x(F_\alpha) \emul \Ext_x(\alpha)$. In the first case,
the estimate in part three is comparable to $\Ext_x(\gamma)$. In the second case, 
\begin{align*}
\Ext_x(\gamma) &\emul \frac{\I(\alpha, \gamma)^2}{\Ext_x(F_\alpha)} 
\lmul \left(\frac{\I (\alpha, \gamma)^2 f^2 }{a^2} \right)
   \left( \frac{a}{f} \right)
   \lmul \frac{\ell_q(\gamma|_{F_\alpha})^2}{\ell_q(\alpha)} \Ext_x(\alpha),
\end{align*}
which means the inequality in part two is an equality. 

The only remaining case is when 
$$
\Ext_x(\gamma) \emul 
\twist_\alpha(q, \gamma)^2\Ext_x(\alpha) \I(\alpha, \gamma)^2.
$$
In this case, the estimate in part two is comparable to $\Ext_x(\gamma)$.
This follows from 
$\ell_q(\gamma|_{F_\alpha}) 
\gmul \twist_\alpha(q, \gamma)\ell_q(\alpha) \I(\alpha, \gamma)$.
\end{proof}

\begin{definition} \label{Def:Essential}
We say that $\gamma$ is \emph{essentially horizontal}, 
if at least one of the following holds
\begin{enumerate}
\item $\Ext_x(\gamma) \emul \frac{\ell_q(\gamma|_Y)^2}{\diam_q(Y)^2}$ and $\gamma|_Y$ is mostly horizontal  (i.e., the
its horizontal length is larger than its vertical length) for some $Y \in \calY$.\\
\item $\Ext_x(\gamma) \emul \frac{\ell_q(\gamma|_{F_\alpha})^2 \Ext_x(\alpha)}{\ell_q(\alpha)^2}$ and $\gamma|_{F_\alpha}$ is mostly horizontal for some flat annulus  $F_\alpha$ whose core curve is $\alpha\in \calA$.\\
\item $ \Ext_x(\gamma)\emul  \I(\alpha, \gamma)^2 \Mod_x(E_\alpha)$ for some expanding annulus 
$E_\alpha$ whose core curve is $\alpha\in \calA$.\\
\end{enumerate}

\end{definition}

\subsection*{Extremal length of a geodesic arcs} \label{sec:X}
Consider the $q$--geodesic representative of a curve $\gamma$ and
let $\omega$ be an arc of this geodesic. We would like to estimate the 
contribution that $\omega$ makes to the extremal length of $\gamma$ in $q$. 
Let $(\calA, \calY)$ be the thick-thin decomposition of $q$. Let 
$\lambda_\omega$ be the maximum over $\diam_q(Y)$ for
subsurfaces $Y \in \calY$ that $\omega$ intersects and over all
$d_q(\alpha)$ for curves $\alpha \in \calA$ that $\omega$ crosses. 
Let $\sigma_\omega$ be the $q$--length of the shortest curve $\beta$
that $\omega$ intersects. We claim the contribution from $\omega$
to the extremal length of $\gamma$ is at least 
$$
\X(\omega) = \frac{\ell_q(\omega)^2}{\lambda_\omega^2} 
+ \log \frac{\lambda_\omega}{\sigma_\omega}.
$$
This is stated in the following lemma:
\begin{lemma} \label{Lem:Arcs}
Let $\Omega$ be a set of disjoints sub-arcs of $\gamma$.  Then
$$
\Ext_q(\gamma) 
 \gmul  |\Omega|^2 \, \min_{\omega \in \Omega} \X(\omega).
$$
\end{lemma}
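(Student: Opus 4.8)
The plan is to read off the bound from Minsky's estimate (our \thmref{Thm:LengthEstimate}) by exploiting the fact that, along the $q$-geodesic of $\gamma$, the quantities entering that estimate are super-additive over disjoint sub-arcs. Since $\X(\omega)$ and $\Ext_q(\gamma)$ are invariant under scaling $q$, we may normalize so that $\Area_q \emul 1$; recall that this forces $\diam_q(Y) \lmul 1$ for every thick piece $Y \in \calY$ and $\Ext_x(\alpha) \lmul 1$ for every $\alpha \in \calA$. Rewrite \thmref{Thm:LengthEstimate} as $\Ext_q(\gamma) \emul \sum_P T_P$, where $P$ runs over a set of ``pieces'' whose number depends only on the topology of $S$: one term $T_Y = \ell_q(\gamma|_Y)^2/\diam_q(Y)^2$ for each $Y \in \calY$, and, for each $\alpha \in \calA$, a term $T_\alpha$ which we read as $\max\!\big(\I(\alpha,\gamma)^2/\Ext_x(\alpha),\ \twist_\alpha^2(q,\gamma)\Ext_x(\alpha)\I(\alpha,\gamma)^2\big)$. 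As there are boundedly many terms, Cauchy--Schwarz gives $\sqrt{\Ext_q(\gamma)} \gmul \sum_P \sqrt{T_P}$. So it suffices to assign to each $\omega \in \Omega$ a single piece $P(\omega)$ in such a way that, writing $n_P = \#\{\omega : P(\omega)=P\}$ and $m = \min_{\omega \in \Omega}\X(\omega)$, one has $\sqrt{T_P} \gmul n_P\sqrt{m}$ for every $P$; summing over $P$ (so $\sum_P n_P = |\Omega|$) and squaring then yields the lemma. We may of course assume $m>0$, since otherwise the assertion is trivial.

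To define the assignment, fix $\omega$ and split $\X(\omega) = A_\omega + B_\omega$ with $A_\omega = \ell_q(\omega)^2/\lambda_\omega^2$ and $B_\omega = \log(\lambda_\omega/\sigma_\omega)$; at least one summand is $\gmul \X(\omega) \geq m$. Suppose first $A_\omega \gmul m$. The $q$-geodesic of $\gamma$ breaks into segments each lying in some $Y_q$ or some $A_\alpha$ (as in the proof of \lemref{Lem:Restriction}), and there are boundedly many such pieces, so $\omega$ spends a definite fraction of its length — hence length $\gmul \lambda_\omega \sqrt m$ — in a single piece $P$, which by the definition of $\lambda_\omega$ has ``size'' at most $\lambda_\omega$. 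If $P = Y_q$ is thick, then $\ell_q(\omega|_Y)/\diam_q(Y) \gmul \sqrt m$; set $P(\omega) = Y$. If $P = A_\alpha$, then from $\ell_q(\omega|_{A_\alpha}) \lmul \I(\omega,\alpha)\big(d_q(\alpha) + \twist_\alpha(q,\gamma)\ell_q(\alpha)\big)$ and $d_q(\alpha)/\ell_q(\alpha) \gmul 1/\Ext_x(\alpha)$ (from \lemref{Lem:EFE}, using $\log t \leq t$) we obtain $\I(\omega,\alpha)\big(1 + \twist_\alpha(q,\gamma)\Ext_x(\alpha)\big) \gmul \sqrt m$; set $P(\omega) = T_\alpha$. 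Suppose instead $B_\omega \gmul m$. Because $\diam_q(Y)\lmul 1$, the scale $\lambda_\omega$ can be large only through an expanding annulus: $\omega$ crosses a short curve $\alpha$ and runs, in $E_\alpha$ or $G_\alpha$, out to the scale $\lambda_\omega$, so (again by \lemref{Lem:EFE} and the normalization) $1/\Ext_x(\alpha) \emul \Mod_x(A_\alpha) \gmul \log(\lambda_\omega/\sigma_\omega) = B_\omega$; since $\I(\omega,\alpha)\geq 1$, set $P(\omega)=T_\alpha$. (When $m$ is below a universal constant the estimates are needed only up to that constant, so crossing any curve suffices; and if $\omega$ crosses no curve then $B_\omega \leq 0$ and $\omega$ falls in the first case.)

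Finally one checks that the contributions add. If $\omega_1,\dots,\omega_k$ are the arcs assigned to a thick $Y$, their restrictions to $Y_q$ are disjoint sub-arcs of the $q$-geodesic, so $\ell_q(\gamma|_Y) \geq \sum_i \ell_q(\omega_i|_Y) \gmul k\,\diam_q(Y)\sqrt m$, i.e. $\sqrt{T_Y}\gmul k\sqrt m$. For the arcs assigned to an $\alpha \in \calA$ one uses $\I(\alpha,\gamma) \geq \sum_i \I(\omega_i,\alpha)$ together with the elementary inequality $\tfrac1{\Ext_x(\alpha)} + \twist_\alpha^2(q,\gamma)\Ext_x(\alpha) \gmul \big(1+\twist_\alpha(q,\gamma)\Ext_x(\alpha)\big)^2$ (valid since $\Ext_x(\alpha)\lmul 1$), which converts the per-arc bounds above into $\sqrt{T_\alpha}\gmul k'\sqrt m$. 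This gives $\sqrt{T_P}\gmul n_P\sqrt m$ for every $P$ and finishes the proof. The step I expect to be hardest is the middle one — verifying that the whole of $\X(\omega)$, and in particular its logarithmic part, is ``localized'' in a single piece of the thick--thin decomposition — since this requires controlling the relations between $\lambda_\omega$, $\sigma_\omega$, $d_q(\alpha)$, $\ell_q(\alpha)$ and $\Ext_x(\alpha)$ inside and near the thin parts, and disposing of the degenerate cases (arcs trapped in one piece, arcs crossing no short curve, and a bounded minimum $m$).
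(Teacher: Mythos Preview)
Your overall architecture is the same as the paper's: apply Cauchy--Schwarz to the sum in \thmref{Thm:LengthEstimate} (boundedly many pieces), use that restrictions of the $q$--geodesic to pieces are additive over disjoint sub-arcs, and treat the ``length'' part $A_\omega=\ell_q(\omega)^2/\lambda_\omega^2$ and the ``log'' part $B_\omega=\log(\lambda_\omega/\sigma_\omega)$ of $\X(\omega)$ separately. The only structural difference is packaging: you assign each $\omega$ to one piece $P(\omega)$ and sum $n_P\sqrt m$, while the paper proves the two global inequalities
\[
\Ext_q(\gamma)\gmul\Big(\sum_\omega \tfrac{\ell_q(\omega)}{\lambda_\omega}\Big)^2
\qquad\text{and}\qquad
\Ext_q(\gamma)\gmul\Big(\sum_\omega \sqrt{\log\tfrac{\lambda_\omega}{\sigma_\omega}}\Big)^2
\]
and then adds them. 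Your handling of the $A_\omega$ case is fine and matches the paper.

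There is, however, a genuine gap precisely where you flagged it. In the $B_\omega$ case you assert that ``$\omega$ crosses a short curve $\alpha$ and runs, in $E_\alpha$ or $G_\alpha$, out to the scale $\lambda_\omega$'', and from this conclude $\Mod_x(A_\alpha)\gmul\log(\lambda_\omega/\sigma_\omega)$. This is not justified: there need not be a single $\alpha$ whose maximal annulus bridges the scales $\sigma_\omega$ and $\lambda_\omega$. For instance $\omega$ may cross $\alpha_1$ with $\ell_q(\alpha_1)\sim\sigma_\omega$ and $d_q(\alpha_1)$ only of some intermediate size, then pass through a thick piece and cross another $\alpha_2$ whose annulus reaches $\lambda_\omega$; neither $A_{\alpha_i}$ alone sees the full ratio $\lambda_\omega/\sigma_\omega$. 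What the paper does instead is a telescoping product: list the curves $\alpha_1,\dots,\alpha_k\in\calA$ that $\omega$ crosses between the smallest and the largest piece, use that consecutive $\alpha_i$ bound a common thick subsurface so that $d_q(\alpha_i)\gmul\ell_q(\alpha_{i+1})$, and deduce
\[
\prod_{i=1}^k\frac{d_q(\alpha_i)}{\ell_q(\alpha_i)}\gmul\frac{d_q(\alpha_k)}{\ell_q(\alpha_1)}\gmul\frac{\lambda_\omega}{\sigma_\omega}.
\]
Since $k\le|\calA|=O(1)$, some single factor satisfies $\log\big(d_q(\alpha_i)/\ell_q(\alpha_i)\big)\gmul\log(\lambda_\omega/\sigma_\omega)$, and \emph{that} is the $\alpha$ to which you should assign $\omega$. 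Once you insert this telescoping step, the rest of your argument goes through; without it the localization of $B_\omega$ to a single piece is unproved.
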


\begin{proof}
Let $(\calA, \calY)$ be the thick-thin decomposition of $q$. We have 
\begin{align}
\label{Eq:One} \Ext_x(\gamma) 
 & \geq \sum_{Y \in \calY} \frac{\ell_q(\gamma|_Y)^2}{\diam_q(Y)^2} + 
       \sum_{\alpha \in \calA}  \frac{ \ell_q(\gamma|_{A_\alpha})^2}{d_q(\alpha)^2} \\
\label{Eq:Two} & \gmul \left(  
     \sum_{Y \in \calY} \frac{\ell_q(\gamma|_Y)}{\diam_q(Y)} + 
     \sum_{\alpha \in \calA}  \frac{\ell_q(\gamma|_{A_\alpha})}{d_q(\alpha)} \right)^2 \\
\label{Eq:Three} & \geq \left(  \sum_{Y \in \calY} \sum_{\omega \in \Omega} 
        \frac{\ell_q(\omega|_Y)}{\diam_q(Y)} + 
   \sum_{\alpha \in \calA} \sum_{\omega \in \Omega} 
    \frac{\ell_q(\omega|_{A_\alpha})}{d_q(\alpha)} \right)^2 \\
\label{Eq:Four} & \gmul \left(  \sum_{\omega \in \Omega} 
\left( \sum_{Y \in \calY}  \frac{\ell_q(\omega|_Y)}{\lambda_\omega} +
       \sum_{\alpha \in \calA}  \frac{\ell_q(\omega|_{A_\alpha})}{\lambda_\omega}
\right) \right)^2 
 \geq \left( \sum_{\omega \in \Omega} \frac{\ell_q(\omega)}{\lambda_\omega} \right)^2.
\end{align}
Inequality \eqref{Eq:One} follows from \corref{Cor:Length-Only}.
To obtain \eqref{Eq:Two}, we are using
$$
\sum_{i=1}^n x_i^2 \geq \frac 1n \Big(\sum_{i=1}^n x_i \Big)^2
$$
and the fact that the number of components in $\calY$ and in $\calA$ are uniformly
bounded. Line \eqref{Eq:Three} follows from the fact that arcs in $\Omega$
are disjoint sub-arcs of $\gamma$. To get \eqref{Eq:Four}, we then rearrange 
terms and use the fact that for all non-zero terms, $\diam_q(Y)$ and $d_q(\alpha)$ 
are less than $\lambda_\omega$.

Now let $\alpha_1, \ldots, \alpha_k$ be the sequence of curves in $\calA$ that 
$\omega$ intersects as it travels from the shortest curve $\beta$ to the largest 
subsurface it intersects, which has  size of at most $\lambda_\omega$. Note 
that either $\alpha_1 = \beta$ or $\alpha_1$ is the boundary of 
the thick subsurface containing $\beta_\omega$. Either way, 
$\ell_q(\alpha_1) \lmul \sigma_\omega$. Also, 
$d_q({\alpha_i}) \gmul \ell_q(\alpha_{i+1})$. This is because
$\alpha_i$ and $\alpha_{i+1}$ are boundaries of some subsurface
$Y \in \calY$. Finally, $d_q(\alpha_k) \gmul \lambda_\omega$. Therefore, 
$$
\sum_{i=1}^k \log \frac{d_q({\alpha_i})}{\ell_q(\alpha_i)} 
= \log \prod_{i=1}^k  \frac{d_q({\alpha_i})}{\ell_q(\alpha_i)} 
\gmul \log \frac{d_q({\alpha_k})}{\ell_q(\alpha_1)} 
\gmul \log \frac{\lambda_\omega}{\sigma_\omega}.
$$
Since $|\calA| \emul 1$, we can conclude that, for each $\omega$,
there is curve $\alpha$ so that $\I(\alpha, \omega) \geq 1$ and
$\displaystyle \log \frac{d_q(\alpha)}{\ell_q(\alpha)} 
\gmul \log\frac{\lambda_\omega}{\sigma_\omega}$.
Using \thmref{Thm:LengthEstimate}
\begin{align*}
\Ext_q(\gamma) &\gmul \sum_{\alpha \in \calA}
  \frac{\I(\alpha, \gamma)^2}{\Ext_q(\alpha)}
  \gmul \left( \sum_{\alpha \in \calA}  
  \frac{\I(\alpha, \gamma)}{\sqrt{\Ext_q(\alpha)}} \right)^2\\
&\gmul \left( \, \sum_{\omega \in \Omega} \sum_{\alpha \in \calA} 
\I(\alpha, \omega)  \sqrt{\log \frac{d_q(\alpha)}{\ell_q(\alpha)}} \ \right)^2\\
& \gmul \left( \, \sum_{\omega \in \Omega} 
  \sqrt{\log\frac{\lambda_\omega}{\sigma_\omega}} \ \right)^2.
\end{align*}

Combining the above two inequalities, we get
\begin{align*}
\Ext_q(\gamma) &\gmul  
\left( \, \sum_{\omega \in \Omega} \frac{\ell_q(\omega)}{\lambda_\omega} \right)^2 
+  \left( \, \sum_{\omega \in \Omega}  
\sqrt{\log\frac{\lambda_\omega}{\sigma_\omega}} \ \right)^2\\
&\gmul \left(  \sum_{\omega \in \Omega} \sqrt{X(\omega)} \right)^2
\geq |\Omega|^2 \min_\omega X(\omega). \qedhere
\end{align*}
\end{proof}

We also need the following technical lemma.
\begin{lemma} \label{Lem:Subsurface}
Let $q_a$ and $q_b$ be two points along a \Teich geodesic and let
$(\calA_a, \calY_a)$ and $(\calA_b, \calY_b)$ be their thick-thin
decompositions respectively. Let $Y \in \calY_a$, 
\begin{itemize}
\item If $\beta \in \calA_b$ intersects $Y$, then $d_b(\beta) \leq e^{(b-a)}\diam_a(Y)$.
\item If $Z \in \calY_b$ intersects $Y$, then $\diam_b(Z) \leq e^{(b-a)}\diam_a(Y)$.
\end{itemize} 
Similarly,  if $\alpha \in \calA_a$, 
\begin{itemize}
\item If $\beta \in \calA_b$ intersects $\alpha$, then 
$d_b(\beta) \leq e^{(b-a)}\ell_{a}(\alpha)$.
\item If $Z \in \calY_b$ intersects $\alpha$, then 
$\diam_b(Z) \leq e^{(b-a)}\ell_{a}(\alpha)$.
\end{itemize} 
\end{lemma}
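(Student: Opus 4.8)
The four items share one shape: a ``size at time $b$'' --- a width $d_b(\beta)$ or a diameter $\diam_b(Z)$ --- is bounded by $e^{b-a}$ times a size at time $a$. The plan is to prove all four by the same three-step mechanism. In each case I would produce a single \emph{witness curve} $\delta$ on $S$ and chain the three estimates
\[
\mathrm{size}_b \ \lmul\ \ell_b(\delta),\qquad
\ell_b(\delta)\ \le\ e^{b-a}\,\ell_a(\delta),\qquad
\ell_a(\delta)\ \lmul\ \mathrm{size}_a .
\]
The middle estimate is essentially free: the $q_b$--geodesic length of $\delta$ is at most the $q_b$--length of its $q_a$--geodesic representative, which is at most $e^{b-a}$ times its $q_a$--length, because along a \Teich geodesic no arc or curve is distorted by more than $e^{b-a}$ --- this is exactly the exponential-distortion fact already used in the proof of \corref{Cor:Ext/Length}.

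For the first estimate, $\mathrm{size}_b\lmul\ell_b(\delta)$, there are two regimes. If the witness crosses the short curve in question, i.e.\ $\I(\delta,\beta)\ge 1$, then the $q_b$--geodesic of $\delta$ crosses the annulus $A_\beta^b$ and each crossing arc has $q_b$--length at least the width $d_b(\beta)$; hence $d_b(\beta)\le \ell_b(\delta|_{A_\beta^b})\le\ell_b(\delta)$. If instead we are bounding $\diam_b(Z)$ for $Z\in\calY_b$, we take $\delta$ essential and non-peripheral in $Z$ (so $\delta\notin\calA_b$ and $\ell_{x_b}(\delta)\ge\ep_1$), and \thmref{Thm:q-Length} applied inside $Z$ gives $\ell_b(\delta)\gmul \ell_{x_b}(\delta)\,\diam_b(Z)\gmul \ep_1\,\diam_b(Z)$, i.e.\ $\diam_b(Z)\lmul\ell_b(\delta)$.

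For the last estimate, $\ell_a(\delta)\lmul\mathrm{size}_a$, the witness is chosen according to the item. When the time-$a$ datum is a curve $\alpha\in\calA_a$, simply take $\delta=\alpha$: for the third item, $\beta$ intersecting $\alpha$ gives $\I(\alpha,\beta)\ge1$, hence $d_b(\beta)\le\ell_b(\alpha)\le e^{b-a}\ell_a(\alpha)$; for the fourth item, $\alpha$ is essential and non-peripheral in $Z$, so by \thmref{Thm:q-Length} $\diam_b(Z)\emul\ell_b(\alpha)\le e^{b-a}\ell_a(\alpha)$. For the first item, ``$\beta$ intersects $Y$'': since a short marking $\mu_Y$ fills $Y$, $\beta$ intersecting $Y$ forces $\I(\beta,\mu_Y)\ge1$, so some $\delta\in\mu_Y$ has $\I(\delta,\beta)\ge1$; because $\delta\in\mu_Y$ we have $\ell_{x_a}(\delta)=O(1)$, hence $\ell_a(\delta)\lmul\diam_a(Y)$ by \thmref{Thm:q-Length}, and combining the three bounds finishes it.

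The genuinely delicate item is the second one, ``$Z\in\calY_b$ intersects $Y\in\calY_a$'', and I expect the crux of the whole lemma to lie there. Here I need a witness that is simultaneously controlled in both metrics: a curve $\delta$ that is essential and non-peripheral in $Z$ (so $\diam_b(Z)\lmul\ell_b(\delta)$, as above) and that at the same time is isotopic into $Y\cup\partial Y$ without crossing $\partial Y$. In the first case $\delta$ is essential and non-peripheral in $Y$, and since $Y$ is a thick piece its hyperbolic diameter is $O(1)$, so \emph{every} such curve has $\ell_{x_a}(\delta)\emul 1$ and hence $\ell_a(\delta)\emul\diam_a(Y)$ by \thmref{Thm:q-Length}; in the second case $\delta$ is isotopic to a component of $\partial Y$, a closed $q_a$--geodesic lying in $Y_{q_a}$, so $\ell_a(\delta)\le 2\diam_a(Y)$. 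The remaining point --- and the main obstacle --- is producing such a $\delta$ and, with it, fixing the precise meaning of ``intersects'' in this item: when $Z$ and $Y$ overlap, after putting $\partial Z$ and $\partial Y$ in minimal position, $Z\cap Y$ must carry a curve that is essential and non-peripheral in $Z$ (otherwise $Z\cap Y$ retracts into $\partial Z$ and $Z$ does not meet $Y$ essentially), and such a curve can be pushed off $\partial Y$ to land in $Y$ or parallel to a component of $\partial Y$. Once this combinatorial input is in hand, all four items follow by the same bookkeeping with the exponential-distortion bound and \thmref{Thm:q-Length}.
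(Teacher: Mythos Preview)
Your treatment of items 1, 3 and 4 is correct and matches the paper's approach (the paper packages all four into one argument using a shortest filling curve system for $Y$, but your choices of witness --- a curve of $\mu_Y$ for item~1 and $\alpha$ itself for items~3 and~4 --- work just as well).

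The gap is in item~2, and it is twofold.

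First, the sentence ``since $Y$ is a thick piece its hyperbolic diameter is $O(1)$, so \emph{every} such curve has $\ell_{x_a}(\delta)\emul 1$'' is false. Bounded hyperbolic diameter of $Y_{x_a}$ does not bound the hyperbolic length of curves in $Y$: a once-punctured torus $Y$ contains essential non-peripheral curves of arbitrarily large $x_a$--length. So even if you succeed in finding a closed curve $\delta$ in $Y\cap Z$ that is essential and non-peripheral in $Z$, you have no control on $\ell_a(\delta)$ unless you also arrange $\delta$ to be \emph{short} in $Y$ (e.g.\ $\delta\in\mu_Y$ or $\delta\subset\partial Y$).

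Second, once you restrict to short witnesses, the combinatorial claim fails: there need not be any closed curve that is simultaneously in $\mu_Y\cup\partial Y$ and essential non-peripheral in $Z$. After putting $\partial Y$ and $\partial Z$ in minimal position, $Y\cap Z$ can consist entirely of disks (rectangles), and every curve of $\mu_Y\cup\partial Y$ can cross $\partial Z$; then no single closed-curve witness of the required type exists, even though $Y$ and $Z$ intersect essentially.

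The paper repairs both points at once by abandoning the search for a single closed curve. It takes the shortest curve system $\gamma$ that \emph{fills} $Y$; this has $\ell_a(\gamma)\lmul\diam_a(Y)$ by \thmref{Thm:q-Length}, fixing the first problem. If $Z$ meets $Y$, then because $\gamma$ fills $Y$ the system $\gamma$ must enter $Z$ and produce an essential \emph{arc} $\omega$ in $Z$ with $\ell_b(\omega)\le\ell_b(\gamma)$; the boundary component $\delta$ of $Z$ hit by $\gamma$ also satisfies $\ell_b(\delta)\le\ell_b(\gamma)$. Surgering $\omega$ with $\delta$ yields an essential closed curve in $Z$ whose $q_b$--length is $\lmul\ell_b(\gamma)\le e^{(b-a)}\diam_a(Y)$, and \thmref{Thm:q-Length} then gives $\diam_b(Z)\lmul e^{(b-a)}\diam_a(Y)$. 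The passage through arcs-plus-surgery is exactly the missing idea in your argument.
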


\begin{proof}
Let $\gamma$ be the shortest curve system in $q_a$ that fills $Y$. 
Then $l_{b}(\gamma)\lmul e^{(b-a)}\diam_a(Y)$. 
If $Y$ intersects $\beta \in \calA_b$ then some curve in $\gamma$ has to 
intersect  $A_\beta$ essentially and we have
$$d_b(\alpha) \leq l_{b}(\gamma) \lmul e^{(b-a)}\diam_a(Y).$$

If $Y$ intersects some subsurface $Z \in \calY_b$, then $Z$ has an essential 
arc $\omega$ in $Z$ whose $q_b$--length is less than the $q_b$--length
of $\gamma$. Also, if $Y$ intersects a boundary component $\delta$ of $Z$, 
$$
\ell_{b}(\gamma) \gmul d_b(\delta) \geq \ell_{b}(\delta). 
$$
By doing a surgery between $\omega$ and $\delta$, one obtains
an essential curve in $Z$ whose $q_b$--length is less than a fixed
multiple of $\gamma$. Hence
$$
\diam_b(Z) \lmul l_{b}(\gamma) \lmul e^{(b-a)}\diam_a(Y).
$$
Which is as we desired. The argument for $\alpha \in \calA_a$ is similar. 
\end{proof}

\section{The main theorem}
Let $\calG \from \R \to \calT(S)$ be a \Teich geodesic. We denote
the Riemann surface $\calG(t)$ by $\calG_t$ and the corresponding
quadratic differential in $\calG_t$ by $q_t$. For a curve $\gamma$,
denote the extremal length of $\gamma$ on $\calG_t$ by 
$\Ext_t(\gamma)$ and the thick-thin decomposition of
$q_t$ simply by $(\calA_t, \calY_t)$.

\begin{theorem} \label{Thm:Extremal}
There exists a constant $\quasi$, such that for every
measured foliation $\mu$, any \Teich geodesic $\calG$
and points $x,y,z \in \calT(S)$ appearing in that order along $\calG$
we have
$$
\Ext_x(\mu) \leq \quasi \max \big( \Ext_y(\mu), \Ext_z(\mu) \big).
$$
\end{theorem}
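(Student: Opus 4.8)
The plan is to reduce the stated inequality to the dichotomy ``essentially horizontal vs.\ essentially vertical'' at the surface $x=\calG_s$, and then exploit the two one-sided monotonicity statements that this dichotomy provides. Set up coordinates so that $x=\calG_s$, $y=\calG_t$, $z=\calG_u$ with $s<t<u$. By \corref{Cor:Significant}, at the point $x$ there is a distinguished piece of the thick-thin decomposition $(\calA_s,\calY_s)$ of $q_s$ — either a thick subsurface $Y\in\calY_s$, a flat annulus $F_\alpha$, or an expanding annulus $E_\alpha$ — whose contribution to $\Ext_x(\mu)$ is comparable to $\Ext_x(\mu)$ itself. The curve $\mu$ restricted to that piece is either mostly horizontal or mostly vertical in $q_s$. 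If it is essentially horizontal in the sense of \thmref{Def:Essential}, I will show $\Ext$ is essentially increasing going \emph{forward}, so $\Ext_x(\mu)\lmul \Ext_z(\mu)$. If it is essentially vertical, the symmetric fact (reverse the geodesic, so ``vertical'' becomes ``horizontal'') gives $\Ext$ essentially increasing going \emph{backward}, but here $x$ is the leftmost point so that direction is not available; instead I argue directly that for an essentially-vertical configuration at $x$ the extremal length is already dominated by its value at a point to the right, because the horizontal component of $\mu$ on the relevant piece — which is what eventually governs $\Ext_y$ and $\Ext_z$ — is bounded below by a definite fraction of what it will become, using that stretching only expands horizontal lengths. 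Either way $\Ext_x(\mu)\lmul\max(\Ext_y(\mu),\Ext_z(\mu))$.

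In more detail, the essentially-horizontal case is handled by \corref{Cor:Significant} applied at $x$ together with the fact that along a forward \Teich geodesic a horizontal arc of $q_s$ has its $q_t$--length at least its $q_s$--length (the horizontal measure is multiplied by $e^{t-s}\ge1$), and the diameters and annulus sizes change by at most a bounded amount in a way controlled by \lemref{Lem:Subsurface} and \corref{Cor:Ext/Length}. Concretely, if $\Ext_x(\mu)\emul \ell_{q_s}(\mu|_Y)^2/\diam_{q_s}(Y)^2$ with $\mu|_Y$ mostly horizontal, then at a suitable later time the restriction of $\mu$ to the descendant pieces still carries a comparable horizontal mass while the relevant diameter has not grown (forward stretching contracts the vertical direction and one shows the horizontal-dominated diameter is essentially non-increasing in the appropriate sense), so the corresponding term in \thmref{Thm:LengthEstimate}, applied at that later time, is $\gmul\Ext_x(\mu)$, giving $\Ext_x(\mu)\lmul\Ext_z(\mu)$. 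The flat-annulus and expanding-annulus subcases are analogous, using \lemref{Lem:EFE} to track how $\Mod(E_\alpha)$, $\Mod(F_\alpha)$ evolve and the twisting estimate \thmref{Thm:TwoTwistings} / \thmref{Thm:q-Length} to convert between flat and extremal data. I will use \lemref{Lem:Arcs} to handle the annoying possibility that the horizontal, high-extremal-length part of $\mu$ spreads across several pieces at the later time: it lets me bound $\Ext$ from below by a sum of per-arc contributions $\X(\omega)$, so that even after the thick-thin decomposition changes I can still recover a term comparable to $\Ext_x(\mu)$.

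For the essentially-vertical case at $x$, I reverse the geodesic to make the relevant piece mostly horizontal, but now $x$ sits at the \emph{end} of the reversed geodesic rather than an interior or initial point, so the forward-increasing argument does not directly apply. The remedy is that ``essentially vertical at $x$'' forces the extremal length at $x$ to be small relative to what a forward push does: a vertical-dominated configuration is exactly the kind of configuration whose extremal length is \emph{near its minimum} along the geodesic, so both $\Ext_y(\mu)$ and $\Ext_z(\mu)$, being to the right of $x$ where the horizontal component has only been stretched, are $\gmul\Ext_x(\mu)$ up to the universal multiplicative constant. Making this precise is the main obstacle: one must show that when the top-contributing piece at $x$ has $\mu$ mostly vertical, \emph{no} piece at $x$ can have $\mu$ essentially horizontal with comparable contribution (this is the content of the ``furthermore'' clause of \corref{Cor:Significant}, which says one of the three inequalities is sharp — combined with a parity/sign argument it pins down whether $\Ext$ is increasing or decreasing at $x$), and then quote the forward-monotonicity from the essentially-horizontal analysis applied at $y$ or $z$ rather than at $x$. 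In other words, if $\Ext$ is essentially decreasing at $x$, then $\Ext_x(\mu)\lmul\Ext$ at any earlier time; but we want a bound by later times, so we instead observe that essentially-decreasing-at-$x$ means there is a later first time $t'\le$ (or near) the point where $\mu$ turns essentially horizontal, after which $\Ext$ essentially increases, and one checks $\Ext_x(\mu)\lmul\Ext_{t'}(\mu)\lmul\max(\Ext_y(\mu),\Ext_z(\mu))$ by a short case analysis on whether $t'$ lies before or after $y$. Assembling these cases, with the universal constants coming from \thmref{Thm:LengthEstimate}, \lemref{Lem:EFE}, \lemref{Lem:Subsurface}, \lemref{Lem:Arcs} and \corref{Cor:Ext/Length}, yields the theorem; finally the statement is extended from curves to arbitrary measured foliations $\mu$ by continuity and density of weighted simple closed curves, since all the estimates are homogeneous and continuous in $\mu$.
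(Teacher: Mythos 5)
There is a genuine gap, and it traces back to misreading which point the quasi\-/convexity inequality actually controls. The statement of the theorem appears to bound $\Ext_x(\mu)$ where $x$ is the first of the three points, but this is a typo in the paper: cross\-/referencing with Theorem~A of the introduction and with the body of the proof (which sets $x=\calG_a$, $y=\calG_b$, $z=\calG_c$ with $a<b<c$ and then bounds $\Ext_b$), the inequality being established is $\Ext_y(\mu)\leq \quasi\,\max\big(\Ext_x(\mu),\Ext_z(\mu)\big)$, i.e.\ the \emph{middle} point's extremal length is controlled by the two endpoints. The literal reading you adopt --- bounding the leftmost endpoint by the two points to its right --- is false: take a curve whose extremal length is monotonically decreasing along the geodesic.

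Working with the wrong reading forces you into the essentially\-/vertical\-/at\-/$x$ case that you flag as problematic, and your patch there does not close the gap. If $\gamma$ is essentially vertical at $x$, \propref{Prop:Horizontal} applied to the time\-/reversed geodesic gives $\Ext_{x'}(\gamma)\gmul\Ext_x(\gamma)$ for $x'<x$, i.e.\ quasi\-/increase going \emph{backward}; this says nothing helpful about $y,z$ to the right, and your claim that ``both $\Ext_y$ and $\Ext_z$ are $\gmul\Ext_x$'' has no support and is in general wrong. The further step where you take a later turn\-/around time $t'$ and assert $\Ext_x\lmul\Ext_{t'}$ reverses the correct inequality: such a $t'$ is where the extremal length is near a local minimum, so $\Ext_{t'}\lmul\Ext_x$, which is of no use.

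The fix is to read the statement correctly and apply \corref{Cor:Significant} at the \emph{middle} point $y$, not at $x$. Because $y$ is interior, both one\-/sided applications of \propref{Prop:Horizontal} are available: if $\gamma$ is essentially horizontal at $y$, push forward to $z$ and get $\Ext_y\lmul\Ext_z$; if essentially vertical at $y$, reverse time (swapping horizontal and vertical) and get $\Ext_y\lmul\Ext_x$. You also skip a preliminary reduction the paper needs: \corref{Cor:Significant} requires $\gamma\notin\calA$, so one first has to dispose of the case that $\gamma$ is short at $y$. If it is not short at both $x$ and $z$ the bound is immediate; if it is short at all three, one invokes from \cite{rafi:CM} that the interval of shortness is connected and the length is quasi\-/decreasing before the balanced time and quasi\-/increasing after, together with the Maskit comparison of hyperbolic and extremal length \cite{maskit:HE}. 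Your reduction from measured foliations to simple closed curves via density in $\MF(S)$ and continuity is fine, and your use of \lemref{Lem:Arcs} to control the horizontal mass spreading over several later pieces correctly anticipates Case~3 of \propref{Prop:Horizontal}.
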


\begin{proof}
Let the times $a<b<c \in \R$ be such that $x=\calG_a$, $y=\calG_b$ and
$z=\calG_c$. Recall that the extremal length 
$$\Ext \from \MF(S) \times \calT(S) \to \R$$
is a continuous function, and that the weighted simple closed curves
are dense in $\MF(S)$. Since the limit of quasi-convex functions is 
itself quasi-convex and a multiple of a quasi-convex function is
also quasi-convex (with the same constant in both cases), it is sufficient 
to prove the theorem for simple closed curves only. That is, we can assume
that every leaf of $\mu$ is homotopic to a curve $\gamma$ and
the transverse measure is one.

If the extremal length of $\gamma$ is very short at $b$ but not very short 
at either $a$ or $c$, the statement is clearly true. If $\gamma$ is short at 
times $a$, $b$ and $c$, the statement is already known;
in \cite{rafi:CM}, it is shown that the interval where $\gamma$ is ``short"
is connected  \cite[Corollary 3.4]{rafi:CM} and along this interval
the extremal length (which comparable with hyperbolic length \cite{maskit:HE})
is quasi-decreasing until the balanced time and is quasi-increasing
afterwards \cite[Theorem 1.2]{rafi:CM}. Therefore, we can assume there is a lower 
bound on the length of $\gamma$ at $b$, where the lower bound depends on the topology of $x$ only. 

By \corref{Cor:Significant}, there is a subsurface of $q_b$ with significant 
contribution and the restriction of $\gamma$ to this subsurface is either mostly 
horizontal or mostly vertical. That is, $\gamma$ is either essentially horizontal or 
essentially vertical. If $\gamma$ is essentially horizontal, \propref{Prop:Horizontal} 
implies  $\Ext_b(\gamma) \lmul \Ext_c(\gamma)$ and we are done.
Otherwise, $\gamma$ is essentially vertical. In this case, we can reverse 
time, changing the role of the horizontal and vertical foliations, and using \propref{Prop:Horizontal} again conclude $\Ext_b(\gamma) \lmul \Ext_a(\gamma)$. 
This finishes the proof.
\end{proof}

\begin{proposition} \label{Prop:Horizontal}
If $\gamma$ is essentially horizontal for the quadratic differential $q_a$,
then for every $b>a$ we have
$$
\Ext_b(\gamma) \succ \Ext_a(\gamma).
$$
\end{proposition}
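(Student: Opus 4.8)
The plan is to exploit the explicit description of the Teichm\"uller geodesic flow: in natural coordinates, the horizontal direction is stretched by $e^{(b-a)}$ and the vertical direction is contracted by $e^{-(b-a)}$. Since $\gamma$ is essentially horizontal for $q_a$, one of the three alternatives in Definition~\ref{Def:Essential} holds. I would treat each case by tracking the relevant piece of the flat structure forward in time and checking that the contribution of $\gamma$ to its extremal length in $q_b$ is at least a uniform multiple of the value of that piece's contribution in $q_a$, which by hypothesis is comparable to $\Ext_a(\gamma)$. Throughout, \lemref{Lem:Subsurface} is the workhorse: it gives that any subsurface or annulus of $q_b$ meeting a fixed piece $Y$ (or $\alpha$) of $q_a$ has size at most $e^{(b-a)}\diam_a(Y)$ (resp.\ $e^{(b-a)}\ell_a(\alpha)$), so that denominators like $\diam_q(Y)^2$ or $d_q(\alpha)^2$ appearing in \corref{Cor:Length-Only} and \lemref{Lem:Arcs} cannot blow up faster than the horizontal stretch.

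For Case (1), $\Ext_a(\gamma)\emul \ell_{q_a}(\gamma|_Y)^2/\diam_a(Y)^2$ with $\gamma|_Y$ mostly horizontal. Consider the sub-arcs of $\gamma|_{Y}$ and push them forward under the flow; their total $q_b$-length is at least the total horizontal length, which is $\geq e^{(b-a)}$ times the horizontal length in $q_a$, hence $\gmul e^{(b-a)}\ell_{q_a}(\gamma|_Y)$ (using ``mostly horizontal'' to compare horizontal length with total length). Taking $\Omega$ to be this collection of arcs and applying \lemref{Lem:Arcs} together with the bound $\lambda_\omega \leq e^{(b-a)}\diam_a(Y)$ from \lemref{Lem:Subsurface}, the first term $\ell_q(\omega)^2/\lambda_\omega^2$ contributes, after summing, at least a uniform multiple of $(\sum_\omega \ell_{q_a-\text{hor}}(\omega)/\diam_a(Y))^2 \emul \ell_{q_a}(\gamma|_Y)^2/\diam_a(Y)^2 \emul \Ext_a(\gamma)$. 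The arcs here really are the pieces of $\gamma|_{Y_{q_a}}$, but one must be a little careful that they remain disjoint sub-arcs of $\gamma$ as needed by \lemref{Lem:Arcs} — this is automatic since they come from the geodesic representative. Case (2) with the flat annulus $F_\alpha$ is essentially the same computation, now with $\ell_q(\alpha)$ in place of $\diam_q(Y)$ and $\Ext_x(\alpha)$ tracked via \lemref{Lem:EFE}; the mostly-horizontal hypothesis again ensures the arcs across $F_\alpha^a$ lengthen by the full stretch factor while the core curve $\alpha$ — being nearly vertical relative to these arcs — does not lengthen faster.

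Case (3), where $\Ext_a(\gamma)\emul \I(\alpha,\gamma)^2\Mod_x(E_\alpha)$ for an expanding annulus, is the one case with no horizontality hypothesis, but it is also the most robust: $\I(\alpha,\gamma)$ is a topological quantity independent of $t$, and by \corref{Cor:Significant}(3) applied at time $b$ we get $\Ext_b(\gamma)\gmul \I(\alpha,\gamma)^2 \Mod_b(E_\alpha^b)$ for whatever expanding annulus $\alpha$ carries in $q_b$; one then only needs $\Mod_b(E_\alpha^b)\gmul \Mod_a(E_\alpha^a)$, or more precisely that the total modulus available around $\alpha$ does not collapse, which follows because an expanding annulus in $q_a$ of definite modulus persists (the geometry around a curve can only gain modulus in one direction while losing it in the other, and here we are on the gaining side as time increases — alternatively invoke \corref{Cor:Ext/Length} in the regime where $\alpha$ stays short, or observe directly that $\Mod(E_\alpha)$ is governed by $\log(e/\ell_q(\alpha))$ and the relevant ratio is controlled by \lemref{Lem:Subsurface}).

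\textbf{Main obstacle.} The delicate point is the one flagged already in the introduction: the thick-thin decomposition of $q_t$ changes with $t$, so the single piece of $q_a$ witnessing essential horizontality may fragment into several pieces of $q_b$, and the horizontal arcs of $\gamma$ may spread across them. This is exactly why the argument is routed through \lemref{Lem:Arcs} rather than through \thmref{Thm:LengthEstimate} directly: \lemref{Lem:Arcs} lets us lower-bound $\Ext_b(\gamma)$ by the contribution of an arbitrary disjoint family of sub-arcs of $\gamma$, using only the \emph{local} scales $\lambda_\omega$, which \lemref{Lem:Subsurface} bounds by $e^{(b-a)}$ times the original scale. So the structure of the proof is: fix the arcs from the $q_a$-picture, estimate their $q_b$-length from below using the horizontal stretch, estimate the relevant $q_b$-scales from above using \lemref{Lem:Subsurface}, and feed both into \lemref{Lem:Arcs}. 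The bookkeeping in Cases (1) and (2) — keeping straight which length is horizontal, which is total, and which scale bounds which — is where the real work lies, but no single estimate should be hard given the machinery already assembled.
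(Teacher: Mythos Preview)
Your outline for Cases (1) and (2) is essentially right, though the paper actually dispatches these two cases more directly via \corref{Cor:Length-Only} applied at time $b$ (summing over the pieces of the $q_b$--decomposition that meet $Y$ or $\alpha$) rather than through \lemref{Lem:Arcs}; your route through \lemref{Lem:Arcs} could be made to work but would need the intermediate inequality $\Ext_b(\gamma)\gmul\big(\sum_\omega \ell_b(\omega)/\lambda_\omega\big)^2$ from inside that lemma's proof rather than its stated conclusion $|\Omega|^2\min_\omega \X(\omega)$. Also, the remark that ``$\alpha$ is nearly vertical relative to these arcs'' in Case (2) is not justified and is not needed: the paper splits Case (2) according to whether $\alpha\in\calA_b$, invoking \corref{Cor:Ext/Length} when it is and repeating the Case (1) argument when it is not.

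The genuine gap is Case (3). Your claim that $\Mod_b(E_\alpha^b)\gmul\Mod_a(E_\alpha^a)$, or that ``the total modulus available around $\alpha$ does not collapse,'' is simply false in general: if $\alpha$ itself is mostly horizontal in $q_a$, then $\ell_t(\alpha)$ grows and $\Ext_t(\alpha)$ can increase, so $\alpha$ may leave $\calA_b$ altogether (in which case \corref{Cor:Significant}(3) does not even apply at time $b$), and even if it stays short its expanding annulus can shrink. None of \corref{Cor:Ext/Length}, \lemref{Lem:EFE}, or \lemref{Lem:Subsurface} gives the monotonicity you want here. The paper's argument is quite different and---contrary to what Definition~\ref{Def:Essential} literally says---it \emph{does} use that $\gamma|_{E_\alpha}$ is mostly horizontal (this is implicit in how ``essentially horizontal'' is used in the proof of \thmref{Thm:Extremal}): one takes $\Omega$ to consist of those arcs of $\gamma$ beginning and ending on $\alpha$ whose restriction to $E_\alpha$ is at least $(1/3)$--horizontal, so that $|\Omega|\gmul \I(\alpha,\gamma)$ and each $\omega$ satisfies $\ell_b(\omega)\gmul e^{(b-a)}d_a(\alpha)$ and $\sigma_\omega\leq e^{(b-a)}\ell_a(\alpha)$. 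Then one minimizes the \emph{sum} of the two terms in $\X(\omega)$ over the unknown $\lambda_\omega$; the minimum is $\gmul\log\big(d_a(\alpha)/\ell_a(\alpha)\big)\emul\Mod_a(E_\alpha)$, and \lemref{Lem:Arcs} gives $\Ext_b(\gamma)\gmul\I(\alpha,\gamma)^2\Mod_a(E_\alpha)$. So, ironically, you have inverted where \lemref{Lem:Arcs} is actually needed: it is inessential in Cases (1)--(2) and essential in Case (3), precisely because the second (logarithmic) term of $\X(\omega)$ is what recovers the modulus of the expanding annulus at time $b$ without assuming anything about the $q_b$--decomposition near $\alpha$.
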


\begin{proof}
We argue in 3 cases according to which inequality in \corref{Cor:Significant}
is an equality up a multiplicative error. 

\subsection*{Case 1} Assume there is a subsurface $Y \in \calY_a$ so that
$$
\Ext_a(\gamma) \emul \frac{\ell_{a}(\gamma|_Y)^2}{\diam_a(Y)^2}
$$
such that $\gamma|_Y$ is mostly horizontal. We have
$ \ell_{b}(\gamma|_Y) \gmul e^{(b-a)}\ell_{a}(\gamma|_Y)$.
Let $\calZ$ be the set of subsurfaces in $\calY_b$ that intersect 
$Y$ and let $\calB$ be a set of annuli $A_\alpha$, where $\alpha \in \calA_b$  
and $\alpha$ intersects $Y$. Then $Y_b$ is contained in 
the union of $ \bigcup_{Z \in \calZ} Z_b $ and $ \bigcup_{\beta \in \calB} A_b(\beta).$
Therefore, 
$$
\ell_{b}(\gamma|_Y) \leq \sum_{Z \in \calZ} \ell_{b}(\gamma|_{Z})+ 
\sum_{\beta \in \calB} \ell_{b}(\gamma|_{A_b(\beta)}).
$$
We also know that
$$
\diam_b(Z) \leq  e^{(b-a)} \diam_a(Y) \qquad\text{and}\qquad
d_b({\beta}) \leq  e^{(b-a)} \diam_a(Y).
$$
Therefore, 
\begin{align*}
\Ext_b(\gamma) 
& \gmul \sum_{Z \in \calZ} \frac{\ell_{b}(\gamma|_{Z})^2}{\diam_b(Z)^2}+ 
\sum_{\beta \in \calB} \frac{\ell_{b}(\gamma|_{A_b(\beta)})^2}{(d_b(\beta))^2}\\
& \gmul \frac{ \sum_{Z \in \calZ} \ell_{b}(\gamma|_{Z})^2+ 
\sum_{\beta \in \calB} \ell_{b}(\gamma|_{A_b(\beta)})^2}{e^{2(b-a)} \diam_a(Y)^2}\\
 & \gmul \left(\frac{e^{(b-a)}\ell_{a}(\gamma|_Y)}{ e^{(b-a)}\diam_a(Y)}\right)^2
  \emul \Ext_a(\gamma). \\
\end{align*}
 
\subsection*{Case 2} Assume that there is a curve $\alpha \in \calA$ so that
$$ 
\Ext_a(\gamma) \emul 
\frac{\ell_{a}(\gamma|_{F_\alpha})^2 \Ext_a(\alpha)}{\ell_{a}(\alpha)^2},
$$
and $\gamma|_{F_\alpha}$ is mostly horizontal. Then
$\ell_{b}(\gamma|_{F_\alpha}) \gmul e^{(b-a)} \ell_{a}(\gamma|_{F_\alpha})$.
If $\alpha$ is still short in $q_b$ then the proposition follows from
\corref{Cor:Ext/Length}. 

Otherwise, let $\calZ$ be the set of sub-surfaces in $\calY_b$ that intersect  
$\alpha$ and let $\calB$ be the set of curves in $\calA_b$ that intersect
$\alpha$. Since $F_\alpha$ has geodesic boundaries, it is contained 
in the union of  $ \bigcup_{Z \in \calZ} Z_b $ and 
$\bigcup_{\beta \in \calB} A_b(\beta).$ The rest of the proof is exatly
as in the previous case with the additional observation that
$\Ext_b(\alpha) \geq \ep_1 \geq \Ext_a(\alpha)$.

\subsection*{Case 3} Assume there an expanding annulus $E$
with large modulus and the core curve $\alpha$ such that 
$$\Ext_a(\gamma) \emul i(\alpha, \gamma)^2 \Mod_a(E)$$
and $\gamma|_E$ is mostly horizontal. 
Let $\Omega$ be the set of sub-arcs of $\gamma$ that start and end
in $\alpha$ and whose restriction to $E$ is at least $(1/3)$--horizontal
(that is, the ratio of the horizontal length to the vertical length is bigger than 
$1/3$--times). We have 
$$|\Omega| \geq (1/4) \, i(\alpha, \gamma).$$
Otherwise, $(3/4)$ of arc are $3$-vertical, which implies that the 
total vertical length is larger than the total horizontal length.  Recall that 
$$\Mod_a(E) \emul \log(d_a(\alpha)/\ell_{a}(\alpha)).$$
For $\omega \in \Omega$ we have $\ell_{a}(\omega) \geq 2d_a(\alpha)$. Since,
the restriction of $\omega$ to $E$ is mostly horizontal, we have
$$\ell_{b}(\omega) \succ e^{(b-a)} d_a(\alpha).$$ 
The arc $\omega$ intersects $\alpha$, so 
$\sigma_\omega \leq \ell_{b}(\alpha)\leq e^{(b-a)}\ell_{a}(\alpha)$.
Therefore,
$$
X(\omega) \gmul \frac{ e^{2(b-a)} (d_a(\alpha))^2}{\lambda_\omega^2} 
+ \log \frac{\lambda_\omega}{e^{(b-a)}\ell_a(\alpha)}.
$$
This expression is minimum when $\lambda_\omega =  \sqrt{2}e^{(b-a)} d_a(\alpha)$. That is,
$$
X(\omega) \gmul \log \frac{d_a(\alpha)}{\ell_a(\alpha)} \emul \frac{1}{\Ext_a(\alpha)}.
$$
Using \lemref{Lem:Arcs} we have
$$
\Ext_b(\gamma) \succ |\Omega|^2 \min_{\omega \in \Omega} \X(\omega)
\gmul \frac{i(\alpha, \gamma)^2}{\Ext_b(\alpha)} \gmul \Ext_a(\gamma).
$$
This finishes the proof in this case.
\end{proof}

\section{Quasi-Convexity of a Ball in \Teich Space}

Consider a Riemann surface $x$. Let $\calB(x,\radius)$ denote the ball
of radius $\radius$ in $\calT(S)$ centered at $x$.

\begin{theorem} \label{Thm:QuasiConvex}
There exists a constant $\error$ such that, for ever $x \in \calT(S)$,
every radius $\radius$ and point $y$ and $z$  in the ball
$\calB (x,\radius)$, the geodesic segment
$[y,z]$ connecting $y$ to $z$ is contained in $\calB(x,{\radius+\error})$.
\end{theorem}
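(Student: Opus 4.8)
The plan is to deduce the quasi-convexity of balls directly from the quasi-convexity of extremal length (Theorem~\ref{Thm:Main}, whose extremal-length half is exactly Theorem~\ref{Thm:Extremal}), via Kerckhoff's formula for the \Teich distance. Recall that for any two points $u, w \in \calT(S)$,
$$
d_{\calT}(u,w) = \frac 12 \log \sup_{\mu \in \MF(S)} \frac{\Ext_w(\mu)}{\Ext_u(\mu)},
$$
the supremum being over all nonzero measured foliations (equivalently, by density and continuity as used in the proof of Theorem~\ref{Thm:Extremal}, over all simple closed curves with weight one). So bounding $d_{\calT}(x, p)$ for $p$ on the segment $[y,z]$ amounts to bounding the ratio $\Ext_p(\mu)/\Ext_x(\mu)$ uniformly over $\mu$.

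The key step is the following. Fix a measured foliation $\mu$ and a point $p \in [y,z]$. Apply Theorem~\ref{Thm:Extremal} to the geodesic through $y, p, z$ (these appear in that order on the geodesic $\calG$ extending $[y,z]$): this gives
$$
\Ext_p(\mu) \leq \quasi \max\big(\Ext_y(\mu), \Ext_z(\mu)\big).
$$
Now since $y, z \in \calB(x,\radius)$, Kerckhoff's formula gives $\Ext_y(\mu) \leq e^{2\radius}\Ext_x(\mu)$ and likewise $\Ext_z(\mu) \leq e^{2\radius}\Ext_x(\mu)$. Combining,
$$
\Ext_p(\mu) \leq \quasi\, e^{2\radius}\, \Ext_x(\mu).
$$
Taking the supremum over $\mu$ and feeding the result back into Kerckhoff's formula,
$$
d_{\calT}(x,p) = \frac 12 \log \sup_{\mu} \frac{\Ext_p(\mu)}{\Ext_x(\mu)}
\leq \frac 12 \log\big(\quasi\, e^{2\radius}\big) = \radius + \frac 12 \log \quasi.
$$
So the theorem holds with $\error = \frac 12 \log \quasi$, a constant depending only on the topology of $S$. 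Since $p \in [y,z]$ was arbitrary, $[y,z] \subset \calB(x, \radius + \error)$.

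The only genuine subtlety — and where I would spend a sentence of care — is the reduction from arbitrary measured foliations to weighted simple closed curves when invoking Theorem~\ref{Thm:Extremal}, but this is handled exactly as in the proof of Theorem~\ref{Thm:Extremal}: extremal length is continuous on $\MF(S)\times\calT(S)$, weighted simple closed curves are dense in $\MF(S)$, and the quasi-convexity inequality, being closed, passes to limits with the same constant $\quasi$. There is no serious obstacle here; the content of the theorem is entirely carried by Theorem~\ref{Thm:Extremal}, and this section is just the packaging. One could remark afterward that the additive loss $\error$ cannot in general be removed — which is the point of the "long geodesic near the boundary of a ball" example mentioned in the introduction — so genuine convexity of balls is not claimed.
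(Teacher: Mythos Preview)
Your proof is correct and is essentially identical to the paper's own argument: both deduce the result from Kerckhoff's distance formula together with \thmref{Thm:Extremal}, obtaining $\error = \tfrac12\log\quasi$. The only cosmetic difference is that the paper picks a single measured foliation realizing $d_\calT(x,p)$ (Kerckhoff's supremum is attained), whereas you bound the ratio for every $\mu$ and then take the supremum; this makes no substantive difference.
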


\begin{proof}
Let $u$ be a point on the segment $[y,z]$. It is sufficient to show that
$$
d_\calT(x,u) \leq \max \big( d_\calT(x,y) , d_\calT(x,z) \big)+c.
$$
There is a measured foliation $\mu$ such that
$$
d_\calT(x,u) = \frac 12 \log \frac{\Ext_u(\gamma)}{\Ext_x(\gamma)}.
$$
Also, from the convexity of extremal lengths \thmref{Thm:Extremal}
we have
$$
\Ext_u(\mu) \leq \quasi \max \big( \Ext_y(\mu), \Ext_z(\mu) \big)
$$
Therefore,
\begin{align*}
d_\calT(x,u) 
 &\leq \frac12 \log \left( \frac{ \quasi 
  \max \big( \Ext_y(\gamma), \Ext_z(\gamma) \big)}{\Ext_x(\gamma)} \right)\\
 &\leq  \max \big( d_\calT(x,y) , d_\calT(x,z) \big)+c. \qedhere
\end{align*}
\end{proof}

\section{Quasi-convexity of hyperbolic Length} \label{Sec:Hyp}
In this section, we prove the analogue of \thmref{Thm:Extremal} for
the hyperbolic length:

\begin{theorem} \label{Thm:Hyperbolic}
There exists a constant $\quasi'$, such that for every 
measured foliation $\mu$, any \Teich geodesic $\calG$ and times 
$a < b < c \in \R$, we have
$$
\Hyp_b(\mu) \leq \quasi' \max \big( \Hyp_a(\mu), \Hyp_c(\mu) \big)
$$
\end{theorem}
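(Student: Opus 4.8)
The plan is to reduce the hyperbolic statement to the extremal-length statement already proved in \thmref{Thm:Extremal}, together with the comparison between hyperbolic and extremal length that is available whenever a curve is not short. As in the proof of \thmref{Thm:Extremal}, one first reduces to the case where $\mu$ is a single simple closed curve $\gamma$ with unit weight, using continuity of $\Hyp$ on $\MF(S) \times \calT(S)$ and density of weighted simple closed curves, and the fact that quasi-convexity passes to limits and scalar multiples. Next I would dispose of the degenerate cases exactly as before: if $\gamma$ is short (hyperbolically, below $\ep_1$) at $b$ but not short at $a$ or $c$, the inequality is immediate from the uniform upper bound $\Hyp_b(\gamma) < \ep_1$ and a uniform lower bound at $a$ or $c$; if $\gamma$ is short at all of $a,b,c$, then by \cite[Corollary 3.4]{rafi:CM} the short interval is connected and by \cite[Theorem 1.2]{rafi:CM} the hyperbolic length (comparable to extremal length there by \cite{maskit:HE}) is quasi-decreasing then quasi-increasing along it, giving the bound. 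So we may assume $\Hyp_b(\gamma) \gmul 1$.

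The main case is when $\gamma$ is not short at $b$. Here the key point is the standard comparison: for a curve whose hyperbolic length is bounded below away from zero, $\Hyp_x(\gamma) \emul \Ext_x(\gamma)$ with comparison constants depending only on the topology of $S$ (this is Maskit's inequality \cite{maskit:HE}, one direction always holding and the other holding precisely when the curve is not short). Thus
$$
\Hyp_b(\gamma) \emul \Ext_b(\gamma) \leq \quasi \max\big(\Ext_a(\gamma), \Ext_c(\gamma)\big).
$$
To finish I need to convert the right-hand extremal lengths back into hyperbolic lengths. This is where a little care is required: at $a$ or $c$ the curve $\gamma$ may itself be short, in which case $\Ext$ and $\Hyp$ are no longer comparable. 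However, the inequality $\Hyp_x(\gamma) \gmul \Ext_x(\gamma)$ fails in exactly the wrong direction for a short curve, while the direction we want, $\Ext_x(\gamma) \lmul \Hyp_x(\gamma)$, is false for short curves too. The clean way around this is to observe that if $\gamma$ is short at $a$ (resp.\ $c$) then, since it is not short at $b$ and $a<b$, the results of \cite{rafi:CM} again apply on the maximal short interval containing $a$: its extremal length is quasi-monotone there, so $\Ext_a(\gamma) \lmul \Ext_{t_0}(\gamma) \emul \Hyp_{t_0}(\gamma) \lmul \max(\Hyp_a(\gamma),\Hyp_b(\gamma))$ where $t_0$ is an endpoint of that interval where $\gamma$ has length $\ep_1$; combining with $\Hyp_b(\gamma)\emul \Ext_b(\gamma) \leq \quasi\max(\Ext_a(\gamma),\Ext_c(\gamma))$ and rearranging yields the claim. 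If $\gamma$ is short at neither $a$ nor $c$, then $\Ext_a(\gamma)\emul \Hyp_a(\gamma)$ and $\Ext_c(\gamma)\emul \Hyp_c(\gamma)$ directly and the chain of inequalities closes with no extra work.

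The hard part will be organizing the short/non-short bookkeeping at the three points $a,b,c$ so that the comparison constants stay uniform and one never applies the false direction of Maskit's inequality; all of it is reduced to \thmref{Thm:Extremal} and the one-dimensional monotonicity statement \cite[Theorem 1.2]{rafi:CM}, so there is no genuinely new geometric input. An alternative, perhaps cleaner, route is to mirror the structure of the proof of \thmref{Thm:Extremal} directly: use \thmref{Thm:q-Length} and Equation~\eqref{Eq:Hyp-Intersection} to write $\Hyp_x(\gamma)$ in terms of $\sum_Y \I(\gamma,\mu_Y)$ and the short-curve contributions, establish an analogue of \corref{Cor:Significant} and Definition~\ref{Def:Essential} for hyperbolic length, and then run the same three-case argument of \propref{Prop:Horizontal} showing that an essentially-horizontal curve has quasi-increasing hyperbolic length; but the reduction to \thmref{Thm:Extremal} above is shorter and I would present that.
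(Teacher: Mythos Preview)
Your main reduction to \thmref{Thm:Extremal} via Maskit does not work. The claim that ``for a curve whose hyperbolic length is bounded below away from zero, $\Hyp_x(\gamma) \emul \Ext_x(\gamma)$'' is false: Maskit's inequalities (the ones you quote later as $2e^{-\Hyp/2} \le \Hyp/\Ext \le \pi$) give a two-sided comparison only when $\Hyp_x(\gamma)$ is bounded \emph{above}. For a long curve on a thick part, the paper's own formulas make the discrepancy explicit: the thick-subsurface contribution to $\Ext_x(\gamma)$ is $\big(\ell_q(\gamma|_Y)/\diam_q(Y)\big)^2$, while the corresponding contribution to $\Hyp_x(\gamma)$ is $\ell_q(\gamma|_Y)/\diam_q(Y)$, so $\Ext$ grows quadratically in $\Hyp$. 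Thus from $\Hyp_b(\gamma)\le \pi\,\Ext_b(\gamma)\le \pi\quasi\max(\Ext_a(\gamma),\Ext_c(\gamma))$ you cannot recover a bound by $\max(\Hyp_a(\gamma),\Hyp_c(\gamma))$, and the bookkeeping you sketch with short intervals at $a$ or $c$ does not address this: the obstruction is at $b$, where $\gamma$ may simply be very long.

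The paper proceeds exactly by the route you call the ``alternative'': it proves hyperbolic analogues of the key lemmas (\thmref{Thm:LengthEstimate-Hyp}, \corref{Cor:Significant_Hyp}, \corref{Cor:Length-Only-Hyp}, \lemref{Lem:Arcs-Hyp}) and then reruns the three-case argument as \propref{Prop:Horizontal-Hyp}. The reduction-to-curves and the short-at-$b$ cases you wrote are fine and match the paper; the genuine content is the essentially-horizontal analysis for hyperbolic length, which cannot be bypassed.
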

\begin{proof}
The argument is identical to the one for \thmref{Thm:Extremal}, with Corollary \ref{Cor:Significant_Hyp} and Proposition \ref{Prop:Horizontal-Hyp} 
being the key ingredients. They are stated and proved below. 
\end{proof}
Our main goal for the rest of this section is the Proposition \ref{Prop:Horizontal-Hyp}.  
To make the 
reading easier, we often take note of the similarities and skip some arguments 
when they are nearly identical to those for the extremal length case. \medskip

In place of \thmref{Thm:LengthEstimate} we have
\begin{theorem}\label{Thm:LengthEstimate-Hyp}
For a quadratic differential $q$ on a Riemann surface $x$, the corresponding thick-thin decomposition $(\calA,\calY)$ and a curve $\gamma$ on $x$, we have
\begin{equation} \label{Eq:Hyp}
\begin{split}
\Hyp_x(\gamma) \emul
&\sum_{Y \in \calY} \frac{\ell_q(\gamma|_Y)}{\diam_q(Y)} + \\
&\sum_{\alpha\in \calA} \left[ \log \frac{1}{\Ext_x(\alpha)} 
     + \twist_\alpha(q,\gamma)\Ext_x(\alpha)  \right] i(\alpha, \gamma).
\end{split}     
\end{equation}
\end{theorem}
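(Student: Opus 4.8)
The plan is to derive this from the extremal-length estimate of \thmref{Thm:LengthEstimate} together with the standard comparison between hyperbolic length and extremal length. First I would recall that for a curve $\gamma$ we have $\Hyp_x(\gamma) \emul \sqrt{\Ext_x(\gamma)}$ when $\gamma$ is not short, but more usefully that the hyperbolic length decomposes over the thick-thin pieces just as the extremal length does: by a result of the type in \cite{minsky:PR, rafi:TL}, $\Hyp_x(\gamma)$ is comparable to the sum of the contributions $\Hyp_x(\gamma|_Y) \emul \ell_x(\gamma|_Y) \emul \I(\gamma,\mu_Y)$ over the thick pieces $Y \in \calY$, plus the contributions from crossing each short curve $\alpha \in \calA$, the latter being comparable to $\bigl(\log\frac{1}{\Ext_x(\alpha)} + \twist_\alpha(x,\gamma)\Ext_x(\alpha)\bigr)\I(\alpha,\gamma)$. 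This is the additive (square-root) analogue of Equation~\eqref{Eq:Minsky}, and the point is that hyperbolic length adds where extremal length adds the squares.

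Next I would convert the term $\I(\gamma,\mu_Y)$ into the flat-geometric quantity $\ell_q(\gamma|_Y)/\diam_q(Y)$. Here I invoke \propref{Prop:q--length}, which gives $\frac{\ell_q(\gamma|_Y)}{\diam_q(Y)} + \I(\gamma,\partial Y) \emul \I(\gamma,\mu_Y)$; summing over $Y \in \calY$ and noting that the number of pieces is uniformly bounded, the term $\sum_{\alpha\in\calA}\I(\gamma,\alpha)$ coming from the $\I(\gamma,\partial Y)$ contributions is dominated by the $\alpha$-sum on the right of \eqref{Eq:Hyp} (since $\log\frac{1}{\Ext_x(\alpha)} \geq $ a positive constant once $\alpha$ is short, and in any case the two are comparable up to absorbing into the main terms, exactly as in the proof of \thmref{Thm:LengthEstimate}). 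That replaces $\sum_Y \I(\gamma,\mu_Y)$ by $\sum_Y \frac{\ell_q(\gamma|_Y)}{\diam_q(Y)}$.

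Finally I would replace $\twist_\alpha(x,\gamma)$ by $\twist_\alpha(q,\gamma)$ in the annulus terms, using \thmref{Thm:TwoTwistings}: since $|\twist_\alpha(x,\gamma) - \twist_\alpha(q,\gamma)| \lmul \frac{1}{\Ext_x(\alpha)}$, we get $\log\frac{1}{\Ext_x(\alpha)} + \twist_\alpha(x,\gamma)\Ext_x(\alpha) \emul \log\frac{1}{\Ext_x(\alpha)} + \twist_\alpha(q,\gamma)\Ext_x(\alpha)$, because the error term $\Ext_x(\alpha)\cdot\frac{1}{\Ext_x(\alpha)} = 1$ is absorbed by the $\log\frac{1}{\Ext_x(\alpha)}$ term (which is $\gmul 1$ for short $\alpha$). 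Multiplying through by $\I(\alpha,\gamma)$ and summing over $\calA$ gives the stated estimate. The main obstacle I anticipate is not any single step but justifying cleanly the additive thick-thin decomposition of hyperbolic length that replaces Minsky's quadratic one --- that is, pinning down the exact form of the per-piece contribution (in particular the $\log\frac{1}{\Ext_x(\alpha)}$ term, which is the hyperbolic-length cost of crossing a short annulus and corresponds to the $\frac{1}{\Ext_x(\alpha)}$ under a square root in the extremal-length version) and citing it precisely; everything after that is a routine transcription of the argument for \thmref{Thm:LengthEstimate} with squares removed.
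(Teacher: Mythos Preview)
Your proposal is correct and follows essentially the same route as the paper's own proof: decompose $\Hyp_x(\gamma)$ additively over the thick-thin pieces, use \propref{Prop:q--length} to convert $\I(\gamma,\mu_Y)$ into $\ell_q(\gamma|_Y)/\diam_q(Y)$ (absorbing the leftover $\I(\gamma,\partial Y)$ terms into the $\log\frac{1}{\Ext_x(\alpha)}$ part of the annulus sum), and then use \thmref{Thm:TwoTwistings} to switch from $\twist_\alpha(x,\gamma)$ to $\twist_\alpha(q,\gamma)$. The only cosmetic difference is that the paper first writes the annulus contribution as $\bigl[\log\frac{1}{\Hyp_x(\alpha)} + \Hyp_x(\alpha)\twist_\alpha(x,\gamma)\bigr]\I(\alpha,\gamma)$ (citing \cite[Corollary~3.2]{rafi:LT}) and then invokes Maskit \cite{maskit:HE} to replace $\Hyp_x(\alpha)$ by $\Ext_x(\alpha)$, whereas you go directly to $\Ext_x(\alpha)$; and your opening sentence about deriving things from \thmref{Thm:LengthEstimate} via $\Hyp \emul \sqrt{\Ext}$ is a red herring that you rightly abandon.
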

\begin{proof}
The hyperbolic length of a curve $\gamma$ is, up to a universal multiplicative 
constant, the sum of the lengths of $\gamma$ restricted to the pieces of the thick-thin decomposition of the surface. 
 The hyperbolic length of $\gamma|_Y$ is comparable to the intersection number of $\gamma$ with a short marking $\mu_Y$ of $Y$, which is, by  the  Proposition \ref{Prop:q--length}, up to a multiplicative error,  
$$ \frac{\ell_q(\gamma|_Y)}{\diam_q(Y)}+\sum_{\alpha\in \partial Y } i(\gamma,\alpha).$$
The contribution from each curve $\alpha\in\calA$ is 
(see, for example, \cite[Corollary 3.2]{rafi:LT}),
$$
\left[\log{\frac{1}{\Hyp_x(\alpha)}}+\Hyp_x(\alpha)\twist_\alpha(x,\gamma)\right] i(\alpha,\gamma).
$$
Thus, we can write an estimate for the hyperbolic length of $\gamma$ as 
\begin{equation} \nonumber
\Hyp_x(\gamma) \emul
\sum_{Y \in \calY} \frac{\ell_q(\gamma|_Y)}{\diam_q(Y)} + 
\sum_{\alpha\in \calA} \left[\log{\frac{1}{\Hyp_x(\alpha)}}+\Hyp_x(\alpha)\twist_\alpha(x,\gamma)\right] i(\alpha,\gamma).
\end{equation}
Note that we are not adding 1 to the sum in the parenthesis above since the sum is actually substantially greater. \medskip

To finish the proof,  we need to replace $\Hyp_x(\alpha)$ with $\Ext_x(\alpha)$ and   $\twist_\alpha(x,\gamma)$ with $\twist_\alpha(q,\gamma)$.
 Maskit has shown \cite{maskit:HE} that, 
when $\Hyp_x(\alpha)$ is small, 
$$\frac{\Hyp_x(\alpha)}{\Ext_x(\alpha)}\emul 1.$$ 
Hence, we can replace $\Hyp_x(\alpha)$ with $\Ext_x(\alpha)$. 
Further, it follows from  \thmref{Thm:TwoTwistings} that 
$$  |\twist_\alpha(q,\gamma)\Ext_x(\alpha)- \twist_\alpha(x,\gamma)\Ext_x(\alpha)|=O(1).$$ 
 Since $\log \frac{1}{\Ext_x(\alpha)}$ is  at least 1 for $\alpha\in\calA$,
we have
$$ \log \frac{1}{\Ext_x(\alpha)}+\twist_\alpha(q,\gamma)\Ext_x(\alpha)\emul \log \frac{1}{\Ext_x(\alpha)}+  \twist_\alpha(x,\gamma)\Ext_x(\alpha),$$
which means that we can replace  $\twist_\alpha(x,\gamma)$ with $\twist_\alpha(q,\gamma)$.
\end{proof} 
We almost immediately have:
\begin{corollary} \label{Cor:Significant_Hyp}
Let  $(\calA, \calY)$ be a thick-thin decomposition for $q$
and let $\gamma$ be a curve that is not in $\calA$. Then
\begin{enumerate}
\item For every $Y \in \calY$
$$
\Hyp_x(\gamma) \gmul \frac{\ell_q(\gamma|_Y)}{\diam_q(Y)}.
$$
\item For every  $\alpha \in \calA$ and the flat annulus  $F_\alpha$ whose core curve is $\alpha$,
$$ 
\Hyp_x(\gamma) \gmul
\log\Mod_x(F_\alpha)i(\alpha,\gamma)
$$
\item For every $\alpha \in \calA$ 
$$
\Hyp_x(\gamma)\gmul \twist_\alpha(q,\gamma)\Ext_x(\alpha)i(\alpha,\gamma). 
$$
\item For every $\alpha \in \calA$ and an expanding annulus 
$E_\alpha$ whose core curve is $\alpha$, 
$$ 
\Hyp_x(\gamma) \gmul \log\Mod_x(E_{\alpha})i(\alpha,\gamma).
$$
\end{enumerate}
Furthermore, at least one of these inequalities is an equality up to a multiplicative error. 
\end{corollary}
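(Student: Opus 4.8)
The plan is to read (1)--(4) off directly from the hyperbolic length estimate \thmref{Thm:LengthEstimate-Hyp}, exactly as \corref{Cor:Significant} was deduced from \thmref{Thm:LengthEstimate}, and then to obtain the ``furthermore'' clause by a pigeonhole argument on the summands of \eqref{Eq:Hyp}. Since every summand in \eqref{Eq:Hyp} is non-negative, each one is individually $\lmul\Hyp_x(\gamma)$. This already gives (1), because $\ell_q(\gamma|_Y)/\diam_q(Y)$ is literally one of the summands, and it gives (3), because $\twist_\alpha(q,\gamma)\Ext_x(\alpha)\,i(\alpha,\gamma)$ is (up to a multiplicative constant) one of the summands as well.

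For (2) and (4) I would compare the surviving annular summand $\log\frac1{\Ext_x(\alpha)}\,i(\alpha,\gamma)$ against $\log\Mod_x(F_\alpha)\,i(\alpha,\gamma)$ and $\log\Mod_x(E_\alpha)\,i(\alpha,\gamma)$ using \lemref{Lem:EFE}: there $\frac1{\Ext_x(\alpha)}$ is comparable to $\Mod_x(E_\alpha)+\Mod_x(F_\alpha)+\Mod_x(G_\alpha)$, hence dominates each of the three moduli up to a fixed multiplicative constant, so $\log\frac1{\Ext_x(\alpha)}\gadd\log\Mod_x(F_\alpha)$ and $\log\frac1{\Ext_x(\alpha)}\gadd\log\Mod_x(E_\alpha)$. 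Because $\alpha\in\calA$ forces $\log\frac1{\Ext_x(\alpha)}\geq 1$, this additive estimate upgrades to a multiplicative one whenever the right side is positive, and (2) and (4) hold trivially when the relevant $\log\Mod$ is $\leq 0$. This handles (2) and (4).

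The ``furthermore'' clause is where the small amount of real work lies. Since $|\calA|$ and $|\calY|$ are bounded in terms of the topology of $S$ only, some summand of \eqref{Eq:Hyp} is $\emul\Hyp_x(\gamma)$. If it is a subsurface term, we get equality in (1). If it is an annular term $\big[\log\frac1{\Ext_x(\alpha)}+\twist_\alpha(q,\gamma)\Ext_x(\alpha)\big]i(\alpha,\gamma)$, I would split using $A+B\emul\max(A,B)$ for non-negative $A,B$: if the twisting part dominates, we get equality in (3); if the $\log\frac1{\Ext_x(\alpha)}$ part dominates, I invoke \lemref{Lem:EFE} to write $\frac1{\Ext_x(\alpha)}\emul\Mod_x(E_\alpha)+\Mod_x(F_\alpha)+\Mod_x(G_\alpha)$ and argue that the logarithm of this sum is comparable to the logarithm of whichever modulus is largest. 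The only (very mild) obstacle is keeping the additive $\log 3$ coming from the $\log$ of a sum under control; this works precisely because $\alpha\in\calA$ guarantees $\log\frac1{\Ext_x(\alpha)}\geq 1$, so the sum is large and the additive error is absorbed into a multiplicative one. Then equality in (2) results if the flat modulus wins, and equality in (4) if one of the two expanding moduli $\Mod_x(E_\alpha),\Mod_x(G_\alpha)$ wins (both being expanding annuli with core curve $\alpha$). I expect no genuine difficulty beyond this bookkeeping; the substance is entirely contained in \thmref{Thm:LengthEstimate-Hyp} and \lemref{Lem:EFE}.
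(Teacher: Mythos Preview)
Your proposal is correct and follows essentially the same approach as the paper: read (1)--(4) off \thmref{Thm:LengthEstimate-Hyp}, then pigeonhole on the summands and invoke \lemref{Lem:EFE} to handle the $\log\frac{1}{\Ext_x(\alpha)}$ term in the ``furthermore'' clause. The only cosmetic difference is that for (2) and (4) the paper uses the cleaner general fact that $\Mod_x(A)\leq 1/\Ext_x(\alpha)$ for any annulus $A$ with core $\alpha$, which gives the log inequality directly without the additive-to-multiplicative conversion you carry out; your route via \lemref{Lem:EFE} works just as well.
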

\begin{proof}
The parts $(1)-(4)$ follow immediately from \thmref{Thm:LengthEstimate-Hyp} and the fact that 
the reciprocal of the extremal length of a curve $\alpha$ is bounded below by the modulus of any annulus homotopic to $\alpha$. 
Further, since the number of pieces in the thick-thin decomposition  $(\calA, \calY)$ is uniformly bounded, some term in \thmref{Thm:LengthEstimate-Hyp} has to be comparable with $\Hyp_x(\gamma)$. The only non-trivial case is when that term is $\log\frac{1}{\Ext_x(\alpha)}i(\alpha,\gamma)$ for some $\alpha\in \calA$. But by \lemref{Lem:EFE}, either 
$$\frac{1}{\Ext_x(\alpha)} \emul \Mod_x(F_\alpha),$$ 
or  
$$\frac{1}{\Ext_x(\alpha)} \emul \Mod_x(E_\alpha),$$
and the Lemma holds.
\end{proof}
As in the section \secref{Sec:Extremal}, we need a notion of 
\emph{essentially horizontal} for hyperbolic length. We say that $\gamma$ is \emph{essentially horizontal}, 
if at least one of the following holds
\begin{enumerate}
\item $\Hyp_x(\gamma) \emul \frac{\ell_q(\gamma|_Y)}{\diam_q(Y)}$ and $\gamma|_Y$ is mostly horizontal  (i.e., the
its horizontal length is larger than its vertical length) for some $Y \in \calY$.\\
\item $\Hyp_x(\gamma) \emul \log\Mod_x(F_\alpha)i(\alpha,\gamma)$ and $\gamma|_{F_\alpha}$ is mostly horizontal for some flat annulus  $F_\alpha$ whose core curve is $\alpha\in \calA$.\\
\item $\Hyp_x(\gamma) \emul \twist_\alpha(q,\gamma)\Ext_x(\alpha)i(\alpha,\gamma)$ and $\gamma|_{F_\alpha}$ is mostly horizontal for some flat annulus  $F_\alpha$ whose core curve is $\alpha\in \calA$.\\
\item $ \Hyp_x(\gamma)\emul \log\Mod_x(E_{\alpha})i(\alpha,\gamma)$ for some expanding annulus 
$E_\alpha$ whose core curve is $\alpha\in \calA$.\\
\end{enumerate}

Further, Corollary \ref{Cor:Length-Only} is replaced with
\begin{corollary} \label{Cor:Length-Only-Hyp}
For any curve $\gamma$, the contribution to the hyperbolic length of 
$\gamma$ from $A_\alpha$, $\alpha \in \calA$, is bounded
below by $\frac{ \ell_q(\gamma|_{A_\alpha})}{d_q(\alpha)}$.
In other words, 
$$
\Hyp_x(\gamma) \gmul
\sum_{Y \in \calY} \frac{\ell_q(\gamma|_Y)}{\diam_q(Y)} + 
\sum_{\alpha \in \calA}  \frac{ \ell_q(\gamma|_{A_\alpha}) }{d_q(\alpha)}.
$$
\end{corollary}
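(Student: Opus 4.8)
The plan is to mirror the proof of \corref{Cor:Length-Only} in the extremal-length setting, now using the hyperbolic estimate \thmref{Thm:LengthEstimate-Hyp} in place of \thmref{Thm:LengthEstimate}. The first ingredient is \lemref{Lem:Restriction}, which already gives
$$
\Hyp_x(\gamma) \emul \ell_q(\gamma) \emul \sum_{Y \in \calY} \ell_q(\gamma|_Y) + \sum_{\alpha \in \calA} \ell_q(\gamma|_{A_\alpha}).
$$
Wait — that is false as stated; \lemref{Lem:Restriction} relates $\ell_q(\gamma)$ (flat length) to the restriction sum, but $\Hyp_x(\gamma)$ is \emph{not} comparable to $\ell_q(\gamma)$ in general. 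So instead I will work term by term inside \thmref{Thm:LengthEstimate-Hyp}. The subsurface terms $\frac{\ell_q(\gamma|_Y)}{\diam_q(Y)}$ already appear on both sides, so nothing is needed there. The real content is showing that for each $\alpha \in \calA$, the annular contribution
$$
\left[\log \frac{1}{\Ext_x(\alpha)} + \twist_\alpha(q,\gamma)\Ext_x(\alpha)\right] i(\alpha,\gamma)
$$
dominates $\frac{\ell_q(\gamma|_{A_\alpha})}{d_q(\alpha)}$ up to a uniform multiplicative constant.

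For that step I would reuse the elementary geometric bound on the flat length of an arc crossing $A_\alpha$: writing $a = \ell_q(\alpha)$ and using that every arc of $\gamma|_{A_\alpha}$ crosses $A_\alpha$ and twists around $\alpha$ about $\twist_\alpha(q,\gamma)$ times,
$$
\ell_q(\gamma|_{A_\alpha}) \lmul i(\alpha,\gamma)\bigl(d_q(\alpha) + \twist_\alpha(q,\gamma)\, a\bigr),
$$
exactly as in the proofs of \corref{Cor:Length-Only} and \corref{Cor:Significant}. Dividing by $d_q(\alpha)$,
$$
\frac{\ell_q(\gamma|_{A_\alpha})}{d_q(\alpha)} \lmul i(\alpha,\gamma)\left(1 + \frac{\twist_\alpha(q,\gamma)\, a}{d_q(\alpha)}\right).
$$
Now I invoke \lemref{Lem:EFE}: since $d_q(\alpha) \geq f_q(\alpha)$ (the flat-cylinder part is contained in $A_\alpha$) and $\Mod_x(F_\alpha) \emul f_q(\alpha)/a$, we get $d_q(\alpha)/a \gmul \Mod_x(F_\alpha) \gmul$ something, but more to the point $d_q(\alpha)/a \gmul 1/\Ext_x(\alpha)$ is the wrong direction. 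The useful inequality is $a/d_q(\alpha) \lmul \Ext_x(\alpha)$ (since $\frac{1}{\Ext_x(\alpha)} \gmul \Mod_x(E_\alpha)+\Mod_x(F_\alpha) \gmul \log\frac{d_q(\alpha)}{a}$, and also $\gmul \frac{f}{a}$, which together force $\frac{d_q(\alpha)}{a}$ to be large when $\Ext_x(\alpha)$ is small). So $\frac{\twist_\alpha(q,\gamma)\, a}{d_q(\alpha)} \lmul \twist_\alpha(q,\gamma)\Ext_x(\alpha)$, and since $\log\frac{1}{\Ext_x(\alpha)} \geq 1$ for $\alpha \in \calA$ we also have $1 \lmul \log\frac{1}{\Ext_x(\alpha)}$. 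Combining,
$$
\frac{\ell_q(\gamma|_{A_\alpha})}{d_q(\alpha)} \lmul \left[\log\frac{1}{\Ext_x(\alpha)} + \twist_\alpha(q,\gamma)\Ext_x(\alpha)\right] i(\alpha,\gamma),
$$
which is precisely the $\alpha$-term of \thmref{Thm:LengthEstimate-Hyp}. Summing over the uniformly bounded collections $\calA$ and $\calY$ yields the corollary.

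The main obstacle I anticipate is checking the inequality $\frac{a}{d_q(\alpha)} \lmul \Ext_x(\alpha)$ carefully: one must confirm that $d_q(\alpha) \geq \max(e,f,g)$-type quantities combine correctly with \lemref{Lem:EFE} so that $\Ext_x(\alpha)^{-1} \gmul \frac{d_q(\alpha)}{a}$ when $\alpha$ is short, and separately handle the case where $\alpha \in \calA$ is not extremely short (so $\Ext_x(\alpha) \emul 1$ and $d_q(\alpha)/a$ is bounded below by a constant), in which case the bound $\frac{\ell_q(\gamma|_{A_\alpha})}{d_q(\alpha)} \lmul i(\alpha,\gamma)(1 + \twist_\alpha(q,\gamma))$ is absorbed by $\log\frac{1}{\Ext_x(\alpha)}i(\alpha,\gamma)$ plus the twisting term since $\log\frac1{\Ext_x(\alpha)} \geq 1$. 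A secondary subtlety is that $A_\alpha$'s are not disjoint and may overlap with subsurfaces $Y_q$, but as in \lemref{Lem:Restriction} the uniform bound on $|\calA|$ and $|\calY|$ makes this harmless.
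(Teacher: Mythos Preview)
Your proposal is correct and follows exactly the approach the paper indicates: the paper's proof is the single sentence ``Identical to the proof of Corollary \ref{Cor:Length-Only} after removing the squares and taking $\log$ when necessary,'' and you have carried out precisely that translation. The key inequality $a/d_q(\alpha) \lmul \Ext_x(\alpha)$ that you worry about does hold, since $d_q(\alpha)/a = (e+f+g)/a \geq \max(e/a,\, f/a,\, g/a) \gmul \log(e/a) + f/a + \log(g/a) \emul 1/\Ext_x(\alpha)$ by \lemref{Lem:EFE} and the elementary bound $x \geq \log x$; your momentary confusion about ``the wrong direction'' is just a slip, as $d_q(\alpha)/a \gmul 1/\Ext_x(\alpha)$ is exactly the inequality you want.
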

\begin{proof}
Identical to the proof of Corollary \ref{Cor:Length-Only} after removing the squares and taking $\log$ when necessary.
\end{proof}
Instead of the function $\X(\omega)$, to estimate the hyperbolic length of an
arc, we define
$$
\h(\omega) = 
\frac{\ell_q(\omega)}{\lambda_{\omega}}+
\log \max \left\{\log\frac{\lambda_{\omega}}{\sigma_{\omega}},1\right\}.
$$
In place of \lemref{Lem:Arcs} we get
\begin{lemma} \label{Lem:Arcs-Hyp}
Let $\Omega$ be a set of disjoints sub-arcs of $\gamma$.  Then
$$
\Hyp_x(\gamma) 
 \succ  |\Omega| \, \min_{\omega \in \Omega} \h(\omega).
$$
\end{lemma}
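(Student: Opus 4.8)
The plan is to mirror the proof of \lemref{Lem:Arcs} exactly, with squares replaced by single powers and sums of squares replaced by sums, using \corref{Cor:Length-Only-Hyp} in place of \corref{Cor:Length-Only}. First I would start from
$$
\Hyp_x(\gamma) \gmul \sum_{Y \in \calY} \frac{\ell_q(\gamma|_Y)}{\diam_q(Y)} + \sum_{\alpha \in \calA} \frac{\ell_q(\gamma|_{A_\alpha})}{d_q(\alpha)},
$$
then restrict each $\gamma|_Y$ and $\gamma|_{A_\alpha}$ to the disjoint sub-arcs $\omega \in \Omega$, and use that $\diam_q(Y)$ and $d_q(\alpha)$ are at most $\lambda_\omega$ for every non-zero term, to get
$$
\Hyp_x(\gamma) \gmul \sum_{\omega \in \Omega} \frac{\ell_q(\omega)}{\lambda_\omega}.
$$
This is simpler than the extremal-length analogue because there is no Cauchy--Schwarz step: the linear structure of the hyperbolic length estimate means the sums pass directly through.

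Next I would handle the logarithmic term. Exactly as in \lemref{Lem:Arcs}, for each $\omega$ I list the curves $\alpha_1,\ldots,\alpha_k \in \calA$ crossed by $\omega$ on the way from its shortest transverse curve to its largest subsurface, and telescope $\sum_i \log \tfrac{d_q(\alpha_i)}{\ell_q(\alpha_i)} \gmul \log \tfrac{\lambda_\omega}{\sigma_\omega}$. Since $|\calA| \emul 1$, there is a single $\alpha$ with $\I(\alpha,\omega)\geq 1$ and $\log \tfrac{d_q(\alpha)}{\ell_q(\alpha)} \gmul \log \tfrac{\lambda_\omega}{\sigma_\omega}$. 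Now \thmref{Thm:LengthEstimate-Hyp} gives $\Hyp_x(\gamma) \gmul \sum_{\alpha \in \calA} \log \tfrac{1}{\Ext_x(\alpha)}\, \I(\alpha,\gamma) \gmul \sum_{\alpha} \log\log \tfrac{d_q(\alpha)}{\ell_q(\alpha)}\,\I(\alpha,\gamma)$, where the extra $\log$ comes from $\log \tfrac{1}{\Ext_x(\alpha)} \gmul \log\log\tfrac{d_q(\alpha)}{\ell_q(\alpha)}$ via \lemref{Lem:EFE}; summing the $\omega$-contributions this is $\gmul \sum_{\omega} \log\log\tfrac{\lambda_\omega}{\sigma_\omega}$, and I would insert the $\max\{\,\param\,,1\}$ to make the expression well defined when $\lambda_\omega/\sigma_\omega$ is small (in which case the term is simply absorbed).

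Finally I would combine the two lower bounds: since $\Hyp_x(\gamma)$ is $\gmul$ each of $\sum_\omega \tfrac{\ell_q(\omega)}{\lambda_\omega}$ and $\sum_\omega \log\max\{\log\tfrac{\lambda_\omega}{\sigma_\omega},1\}$, it is $\gmul$ half their sum, which is $\gmul \sum_{\omega \in \Omega} \h(\omega) \geq |\Omega|\,\min_{\omega} \h(\omega)$. The step I expect to need the most care is the passage $\log\tfrac{1}{\Ext_x(\alpha)} \gmul \log\log\tfrac{d_q(\alpha)}{\ell_q(\alpha)}$: one must check via \lemref{Lem:EFE} that $\tfrac{1}{\Ext_x(\alpha)}$ is comparable to $\Mod_x(E_\alpha)+\Mod_x(F_\alpha)+\Mod_x(G_\alpha) \gmul \log\tfrac{d_q(\alpha)}{\ell_q(\alpha)}$ (handling separately whether the flat cylinder or an expanding annulus dominates), so that taking $\log$ of both sides is legitimate, and that the additive error introduced by the $\max\{\cdot,1\}$ truncation does not swamp any term — but this is exactly parallel to the bookkeeping already done in \lemref{Lem:Arcs}, so it should go through without new ideas.
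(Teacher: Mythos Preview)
Your proposal is correct and follows exactly the approach the paper intends: the paper's own proof reads in full ``Identical to the proof of \lemref{Lem:Arcs} after removing the squares and taking $\log$ when necessary,'' and you have carried out precisely that translation, including correctly noting that the Cauchy--Schwarz step disappears in the linear (hyperbolic) setting and that the extra $\log$ in the second term arises from $\log\tfrac{1}{\Ext_x(\alpha)}\gmul\log\log\tfrac{d_q(\alpha)}{\ell_q(\alpha)}$. The care you flag around the $\max\{\cdot,1\}$ truncation is exactly the bookkeeping the paper's phrase ``taking $\log$ when necessary'' is gesturing at, and it goes through as you describe.
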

\begin{proof}
Identical to the proof of  \lemref{Lem:Arcs} after removing the squares and taking $\log$ when necessary.
\end{proof} 

Finally, we have the analog  of  Proposition \ref{Prop:Horizontal}. 
\begin{proposition} \label{Prop:Horizontal-Hyp}
If $\gamma$ is essentially horizontal for the quadratic differential $q_a$,
then for every $b>a$ we have
$$
\Hyp_b(\gamma) \succ \Hyp_a(\gamma).
$$
\end{proposition}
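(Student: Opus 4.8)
The plan is to mirror the three-case structure of the proof of \propref{Prop:Horizontal}, replacing the quadratic estimates by their linear analogues throughout. Since $\gamma$ is essentially horizontal for $q_a$, one of the four conditions in the hyperbolic definition of essentially horizontal holds, and I will argue separately in each case, using \corref{Cor:Length-Only-Hyp} and \lemref{Lem:Arcs-Hyp} in place of \corref{Cor:Length-Only} and \lemref{Lem:Arcs}.

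\medskip

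\emph{Case 1 ($Y\in\calY_a$ with $\Hyp_a(\gamma)\emul\ell_a(\gamma|_Y)/\diam_a(Y)$ and $\gamma|_Y$ mostly horizontal).} As in the extremal length case, the mostly-horizontal portion of $\gamma$ in $Y$ grows exponentially, so $\ell_b(\gamma|_Y)\gmul e^{(b-a)}\ell_a(\gamma|_Y)$. Let $\calZ\subseteq\calY_b$ be the subsurfaces meeting $Y$ and $\calB$ the annuli $A_\beta$, $\beta\in\calA_b$, with $\beta$ meeting $Y$; then $Y_b$ is covered by their union, so $\ell_b(\gamma|_Y)\leq\sum_{Z\in\calZ}\ell_b(\gamma|_Z)+\sum_{\beta\in\calB}\ell_b(\gamma|_{A_b(\beta)})$. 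By \lemref{Lem:Subsurface}, every $\diam_b(Z)$ and every $d_b(\beta)$ is at most $e^{(b-a)}\diam_a(Y)$. Feeding this into \corref{Cor:Length-Only-Hyp}, using the uniform bound on $|\calZ|+|\calB|$ to pass from a sum of ratios to the ratio of the sums, gives
$$
\Hyp_b(\gamma)\gmul\frac{\sum_{Z}\ell_b(\gamma|_Z)+\sum_{\beta}\ell_b(\gamma|_{A_b(\beta)})}{e^{(b-a)}\diam_a(Y)}\gmul\frac{e^{(b-a)}\ell_a(\gamma|_Y)}{e^{(b-a)}\diam_a(Y)}\emul\Hyp_a(\gamma).
$$

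\medskip

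\emph{Cases 2 and 3 ($\alpha\in\calA_a$, flat annulus $F_\alpha$).} Here $\Hyp_a(\gamma)\emul\log\Mod_a(F_\alpha)\,i(\alpha,\gamma)$ or $\Hyp_a(\gamma)\emul\twist_\alpha(q_a,\gamma)\Ext_a(\alpha)\,i(\alpha,\gamma)$, with $\gamma|_{F_\alpha}$ mostly horizontal, so $\ell_b(\gamma|_{F_\alpha})\gmul e^{(b-a)}\ell_a(\gamma|_{F_\alpha})$. If $\alpha$ is still short in $q_b$, I expect to finish by a direct computation with \lemref{Lem:EFE} and the fact that flat-annulus lengths change at most exponentially, analogous to \corref{Cor:Ext/Length}; otherwise $F_\alpha$ (having geodesic boundary) is covered by the $Z_b$, $Z\in\calY_b$, and $A_b(\beta)$, $\beta\in\calA_b$, meeting $\alpha$, and one repeats the Case 1 argument with $\ell_a(\alpha)$ in the role of $\diam_a(Y)$ (this is the $\alpha\in\calA_a$ half of \lemref{Lem:Subsurface}), using $\Hyp_b(\alpha)\geq\ep_1$ once $\alpha$ is no longer short.

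\medskip

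\emph{Case 4 (expanding annulus $E$ with core $\alpha$, $\Hyp_a(\gamma)\emul\log\Mod_a(E)\,i(\alpha,\gamma)$, $\gamma|_E$ mostly horizontal).} As in the extremal length proof I take $\Omega$ to be the sub-arcs of $\gamma$ running from $\alpha$ to $\alpha$ whose restriction to $E$ is at least $\tfrac13$-horizontal; a counting argument gives $|\Omega|\gmul i(\alpha,\gamma)$. For such $\omega$, $\ell_a(\omega)\geq 2d_a(\alpha)$ and mostly-horizontal behaviour forces $\ell_b(\omega)\gmul e^{(b-a)}d_a(\alpha)$, while $\sigma_\omega\leq\ell_b(\alpha)\leq e^{(b-a)}\ell_a(\alpha)$. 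Plugging into $\h(\omega)=\ell_q(\omega)/\lambda_\omega+\log\max\{\log(\lambda_\omega/\sigma_\omega),1\}$ and optimizing over $\lambda_\omega$ near $e^{(b-a)}d_a(\alpha)$, I expect $\h(\omega)\gmul\log\log(d_a(\alpha)/\ell_a(\alpha))\emul\log\Mod_a(E)$. Then \lemref{Lem:Arcs-Hyp} gives $\Hyp_b(\gamma)\succ|\Omega|\min_\omega\h(\omega)\gmul i(\alpha,\gamma)\log\Mod_a(E)\emul\Hyp_a(\gamma)$.

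\medskip

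The main obstacle I anticipate is Case 4: the hyperbolic analogue of $\X(\omega)$ is $\h(\omega)$, in which the logarithmic term has been dampened by an extra $\log$, so I must be careful that the optimization in $\lambda_\omega$ still leaves a term comparable to $\log\Mod_a(E)=\log\log(d_a(\alpha)/\ell_a(\alpha))$ rather than collapsing to $O(1)$; this is the one place where the ``remove the squares and take $\log$'' slogan genuinely changes the shape of the estimate and needs honest checking. A secondary point needing care is Case 3, where the relevant contribution is the twisting term $\twist_\alpha(q_a,\gamma)\Ext_a(\alpha)\,i(\alpha,\gamma)$ rather than a subsurface term; I expect this to follow as in the second case of \corref{Cor:Significant} (the inequality $\ell_q(\gamma|_{F_\alpha})\gmul\twist_\alpha(q,\gamma)\ell_q(\alpha)i(\alpha,\gamma)$ together with exponential growth of the horizontal part), but it must be recorded explicitly.
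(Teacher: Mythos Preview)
Your outline for Case 1 and Case 4 matches the paper's argument, and your awareness that the double-$\log$ in $\h(\omega)$ needs honest checking is well placed; the paper does this by splitting on whether $\lambda_\omega/e^{b-a}$ is above or below $\sqrt{d_a(\alpha)\ell_a(\alpha)}$.

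The genuine gap is in your Cases 2 and 3 when $\alpha$ remains short at time $b$. You write ``I expect to finish by a direct computation with \lemref{Lem:EFE} \ldots\ analogous to \corref{Cor:Ext/Length}''. This does \emph{not} work for the flat-annulus term $\log\Mod_a(F_\alpha)\,i(\alpha,\gamma)$. The problem is that $\Mod_t(F_\alpha)=f_t(\alpha)/\ell_t(\alpha)$ can decrease exponentially once $\alpha$ becomes mostly horizontal (the cylinder width $f_t$ shrinks while $\ell_t(\alpha)$ grows), so $\log\Mod_b(F_\alpha)$ may be far smaller than $\log\Mod_a(F_\alpha)$; no analogue of \corref{Cor:Ext/Length}, which controls $\Ext(\alpha)/\ell(\alpha)$ rather than $\log(1/\Ext(\alpha))$, recovers this. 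The paper's fix (its Case 2.2) is a genuinely new idea you are missing: since $\gamma|_{F_\alpha}$ is mostly horizontal, the twisting $\twist_\alpha(q_t,\gamma)\emul \ell_t(\omega)/\ell_t(\alpha)$ is essentially constant in $t$ and is at least $\Mod_c(F_\alpha)$ (where $c$ is the time $\alpha$ becomes horizontal). Then one uses the elementary inequality $-\log x + kx > \log k$ with $x=\Ext_b(\alpha)$ and $k=\twist_\alpha(q_b,\gamma)$ to get
\[
\log\frac{1}{\Ext_b(\alpha)}+\twist_\alpha(q_b,\gamma)\Ext_b(\alpha)\ \gmul\ \log\twist_\alpha(q_b,\gamma)\ \gmul\ \log\Mod_a(F_\alpha),
\]
so the twisting contribution at time $b$ compensates for the lost modulus. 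For the twist-dominant sub-case (your Case 3, the paper's Case 2.3) the paper does use \corref{Cor:Ext/Length}, but only after arguing that $\twist_\alpha(q_t,\gamma)$ is essentially constant and that $\alpha$ is mostly horizontal; your proposal does not record this step either.
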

\begin{proof}
By the definition of essentially horizontal, there are four cases to consider. We deal with two of them, the flat annulus case and the twisting case, at once in Case 2. 
\subsection*{Case 1}  
There is a thick subsurface $Y$ where $\gamma$ is mostly horizontal and such that 
$$
\Hyp_a(\gamma) \emul \frac{\ell_{a}(\gamma|_Y)}{\diam_a(Y)}
$$
The proof is as in the extremal length case after removing the squares.

\subsection*{Case 2} There exists a curve $\alpha\in \calA$ so that 
\begin{equation} \label{flat}
\Hyp_a(\gamma)\emul \log\Mod_a(F_\alpha)i(\alpha,\gamma),
\end{equation} or 
\begin{equation}\label{twist}
\Hyp_a(\gamma)\emul \twist_\alpha(a,\gamma)\Ext_a(\alpha)i(\alpha,\gamma),
\end{equation}
and  $\gamma|_{F_\alpha}$ is mostly horizontal. We argue in three sub-cases.
 
\subsection*{Case 2.1} Suppose first that $\alpha$ is no longer short at $t=b$
and either \eqref{flat} or \eqref{twist} holds. Let $\calZ$ be the set of sub-surfaces in 
$\calY_b$ that intersect  
$\alpha$ and let $\calB$ be the set of curves in $\calA_b$ that intersect
$\alpha$. Then, by Corollary \ref{Cor:Length-Only-Hyp} and \lemref{Lem:Subsurface},
\begin{align*}
\Hyp_b(\gamma) &\gmul
  \sum_{Z \in \calZ} \frac{\ell_b(\gamma|_Z)}{\diam_b(Z)} + 
  \sum_{\beta \in \calB}  \frac{ \ell_b(\gamma|_{A_\beta}) }{d_b(\beta)}\\
&\gmul \sum_{Z \in \calZ} \frac{\ell_{b}(\gamma|_Z)}{e^{b-a}\ell_{a}(\alpha)} + 
  \sum_{\beta \in \calB}  \frac{ \ell_{b}(\gamma|_{A_\beta}) }{e^{b-a}\ell_{a}(\alpha)}.
\intertext{But $F_\alpha$ is contained in 
$\left( \bigcup_{Z \in \calZ} Z \right) \cup \left( \bigcup_{\beta \in \calB} A_\beta \right)$.}
&\gmul \frac{\ell_{b}(\gamma|_{F_\alpha})}{e^{b-a}\ell_{a}(\alpha)}
  \gmul \frac{\ell_{a}(\gamma|_{F_\alpha})}{\ell_{a}(\alpha)}\\
&\gmul \max \big\{\log\Mod_a(F_\alpha)i(\alpha,\gamma),  
                    \twist_\alpha(a,\gamma)\Ext_a(\alpha)i(\alpha,\gamma)\big\}\\
&\emul \Hyp_a(\gamma).
\end{align*}

\subsection*{Case 2.2} Suppose now that $\alpha \in \calA_b$ and that
\eqref{flat} holds. If $\alpha$ is mostly vertical at time $a$, the
extremal
length of $\alpha$ is decreasing exponentially fast for some interval
$[a,c]$. That is, $\Mod_c(F_\alpha) \gmul \Mod_a(F_\alpha)$.
It is sufficient to show that for $b\geq c$,
$$
\Hyp_b(\gamma) \gmul \log\Mod_c(F_\alpha) \I(\alpha, \gamma).
$$

Our plan is to argue that, while the modulus of $F_{\alpha}$ is decreasing,
the hyperbolic length of $\gamma$ is not decreasing by much because the
curve is twisting very fast around $\alpha$. We need to estimate the twisting
of $\gamma$ around $\alpha$. Let $\omega$ be one of the arcs of
$\gamma|_{F_\alpha}$. Note that $\omega$ is mostly horizontal at $c$
(since it was at $a$) and its length is larger than $f_c(\alpha)$.
Also, since $\alpha$ is mostly horizontal
at $c$, $f_t(\alpha)$ is decreasing exponentially fast at $t=c$. Hence, after
replacing $c$ with a slightly larger constant, we can assume
$\omega$ is significantly larger than $f_a(\alpha)$ and therefore,
the number of times $\omega$ twists around $\alpha$ is approximately
the length ratio of $\omega$ and $\alpha$
(see Equation 15 and 16 in \cite{rafi:CM} and the related discussion for
more details). That is, for $c\leq t\leq b$, $\twist_\alpha(q_t,\gamma)$
is essentially constant:
$$
\twist_\alpha(q_t,\gamma)\emul \frac{\ell_{t}(\omega)}{\ell_{t}(\alpha)}
\emul  \frac{e^{(t-a)}\ell_{a}(\omega)}{e^{(t-a)}\ell_{a}(\alpha)}
= \frac{\ell_{a}(\omega)}{\ell_{a}(\alpha)}.
$$
Therefore,
$$
\Mod_c(F_\alpha) = \frac{f_c(\alpha)}{\ell_{c}(\alpha)}
\leq \frac{\ell_{a}(\omega)}{\ell_{a}(\alpha)} \emul \twist_\alpha(q_c,\gamma).
$$
Keeping in mind that, for $k\geq 0$, the function $f(x)=-\log{x}+kx >
\log{k}$ ,
we have
\begin{align*}
\Hyp_b(\gamma)
&\gmul \left[\log\frac{1}{\Ext_{b}(\alpha)}
    +\twist_\alpha(b,\gamma)\Ext_b(\alpha)\right]i(\alpha,\gamma)\\
&\gmul \log  \big( \twist_\alpha(q_b,\gamma) \big)\, i(\alpha,\gamma)
    \gmul \log \Mod_c(F_\alpha) \, i(\alpha,\gamma).
\end{align*}

\subsection*{Case 2.3} Suppose that $\alpha \in \calA_b$ and that \eqref{twist} holds. 
Since $\gamma$ crosses $\alpha$, $\Hyp_a(\gamma)$ is greater than
a large multiple of $\I(\alpha, \gamma)$. Hence
$$\Hyp_a(\gamma)\emul \twist_\alpha(a,\gamma)\Ext_a(\alpha)i(\alpha,\gamma)$$
implies that $\twist_\alpha(a,\gamma)$ is much larger than
$\Mod_a(F_\alpha)$. That is, the angle between $\gamma$ and $\alpha$ is
small. Therefore, after perhaps replacing $a$ with a slightly larger number, 
we can assume that $\alpha$ is mostly horizontal and that, for $a\leq t \leq b$,
\begin{equation} \label{twistbounds}
\twist_\alpha(t,\gamma)\emul \frac{\ell_t(\omega)}{\ell_t(\alpha)} 
\end{equation}
Applying \thmref{Thm:LengthEstimate-Hyp}, Equation~\eqref{twistbounds}, 
Corollary \ref{Cor:Ext/Length}, Equation~\eqref{twist} in that order
we obtain:
\begin{align*}
\Hyp_b(\gamma)
 &\gmul \left[\twist_\alpha(b,\gamma)\Ext_b(\alpha)\right]i(\alpha,\gamma)\\
 &\emul \frac {\Ext_b(\alpha)}{\ell_{b}(\alpha)}
                \ell_{b}(\omega) i(\alpha,\gamma)\\
&\gmul \frac{e^{b-a}}{e^{b-a}} \frac{\Ext_{a}(\alpha)}{\ell_a(\alpha)}
                \ell_{a}(\omega) i(\alpha,\gamma)\\
&\gmul \left[\twist_\alpha(a,\gamma)\Ext_a(\alpha)\right]i(\alpha,\gamma).
\end{align*}

\subsection*{Case 3} There is a curve $\alpha\in\calA$ with expanding annulus 
$E_{\alpha}$ such that $\gamma|_{E_\alpha}$ is mostly horizontal with 
$$
\Hyp_a(\gamma)\emul \log\Mod_a(E_{\alpha})i(\alpha,\gamma)
$$
Following the proof for the corresponding case for extremal length, we have
\begin{align*}
\h(\omega) &\gmul 
\frac{e^{b-a}d_a(\alpha)}{\lambda_{\omega}}+
\log\max\left\{\log\frac{\lambda_{\omega}}{e^{b-a}\ell_{a}(\alpha)},1\right\} \\
& \gmul \log\log \frac{d_a(\alpha)}{\ell_{a}(\alpha)}\emul \log \Mod_a(E_{\alpha}).
\end{align*}
One can verify the second inequality as follows. If 
$\frac{\lambda_{\omega}}{e^{b-a}}\leq \sqrt{d_a(\alpha) \ell_{a}(\alpha)}$, then 
$$
\frac{e^{b-a}d_a(\alpha)}{\lambda_{\omega}}
  \geq\sqrt\frac{d_a(\alpha)}{ \ell_{a}(\alpha)}
  \gmul \log\log \frac{d_a(\alpha)}{\ell_{a}(\alpha)}.
$$ 
Otherwise, 
$$
\log\log\frac{\lambda_{\omega}}{e^{b-a}\ell_{a}(\alpha)}
\geq \log\log\sqrt{\frac{d_a(\alpha)}{\ell_{a}(\alpha)}}
\gmul  \log\log \frac{d_a(\alpha)}{\ell_{a}(\alpha)},$$
and applying \lemref{Lem:Arcs-Hyp}, we have
$$
\Hyp_b(\gamma)
\gmul  |\Omega| \min_{\omega \in \Omega} \h(\omega)
\gmul i(\alpha,\gamma) \log\Mod_a(E_{\alpha})\emul \Hyp_a(\gamma).
$$
This finishes the proof.
\end{proof}


\section{Examples} \label{Sec:Examples}
This section contains two examples. In the first example we
describe a \Teich geodesic and a curve whose length is not
convex along this geodesic. The second example is of a very long 
geodesic that spends its entire length near the boundary of a round ball. 

\begin{example}[Extremal length and hyperbolic length are not convex]
To prove that the extremal and the hyperbolic lengths are not convex, we 
construct a quadratic differential and analyze these two lengths for a specific 
curve along the geodesic associated to this quadratic differential. We show that  
on some interval the average slope (in both cases) is some positive number 
and on some later interval the average slope is near zero. This shows that the 
two length functions are not convex. Note that, scaling the weight of a curve 
by a factor $k$ increases the hyperbolic and the extremal length of that curve
by factors  of $k$ and $k^2$, respectively. Thus, after scaling, 
one can produce examples where the average slope is very large on some 
interval  and near zero on some later interval. 

Let $0< a \ll 1$. Let $T$ be rectangular torus obtained from identifying the opposite 
sides of the rectangle $[0,a]\times [0,\frac{1}{a}]$. Also, let $C$ be a euclidean 
cylinder obtained by identifying vertical sides of $[0,a]\times[0,a]$. Take two 
copies $T_1$ and $T_2$ of $T$, each cut along a horizontal segment of length 
$a/2$ (call it a slit), and join them by gluing $C$ to the slits. This defines a 
quadratic differential $q$ on a genus two surface $x_0$. The horizontal and 
the vertical trajectories of $q$ are those obtained from the horizontal and the vertical
foliation of $\R^2$ by lines parallel to the $x$--axis and $y$--axis respectively. 
We now consider the \Teich geodesic based at $x_0$ in the direction of $q$. 
Let $\alpha$ be a core curve of cylinder $C$. We will show that, for small
enough $a$, $\Ext_{x_t}(\alpha)$ and $\Hyp_{x_t}(\alpha)$ are not convex 
along $x_t$. 

Let $\rho$ be the metric which coincides with the flat metric of $q$ on $C$ and on 
the two horizontal bands in $T_i$ of width and height $a$ with the slit in the middle, and is 0 otherwise. 
The shortest curve in the homotopy class of $\alpha$ has length $a$ in this metric. 
Then we have 
\begin{equation}\label{ex:lowexp}
\Ext_{x_0}(\alpha)\geq \frac{a^2}{3a^2}=\frac13. 
 \end{equation}
Also, at time $t<0$, we have $\Mod_{x_{t}}(C)=\frac{ae^{-t}}{a e^t}= e^{-2t}$ 
and, therefore, 
\begin{equation}\label{ex:upexp}
 \Ext_{x_{t}}(\alpha)\leq e^{2t}.
\end{equation}
Hence we see that the extremal length of $\alpha$ grows exponentially on 
$(-\infty,0)$. In particular, the average slope on the interval 
$\mathcal J = (-2,0)$ is more than $\frac18$. 
By Proposition 1 and Corollary 3 in \cite{maskit:HE}, 
\begin{equation}\label{ex:maskit} 2
e^{-\frac{1}{2}\Hyp_{x}(\alpha)}\leq \frac{\Hyp_{x}(\alpha)}{\Ext_{x}(\alpha)}\leq\pi.
\end{equation}
and it is easy to see that the slope of $\Hyp_{x_{t}}(\alpha)$ on this interval is also 
greater than $\frac18$.

\begin{figure}[ht] 
\setlength{\unitlength}{0.01\linewidth}
\begin{picture}(90,75)
\put(0,0){\includegraphics[width=90\unitlength]{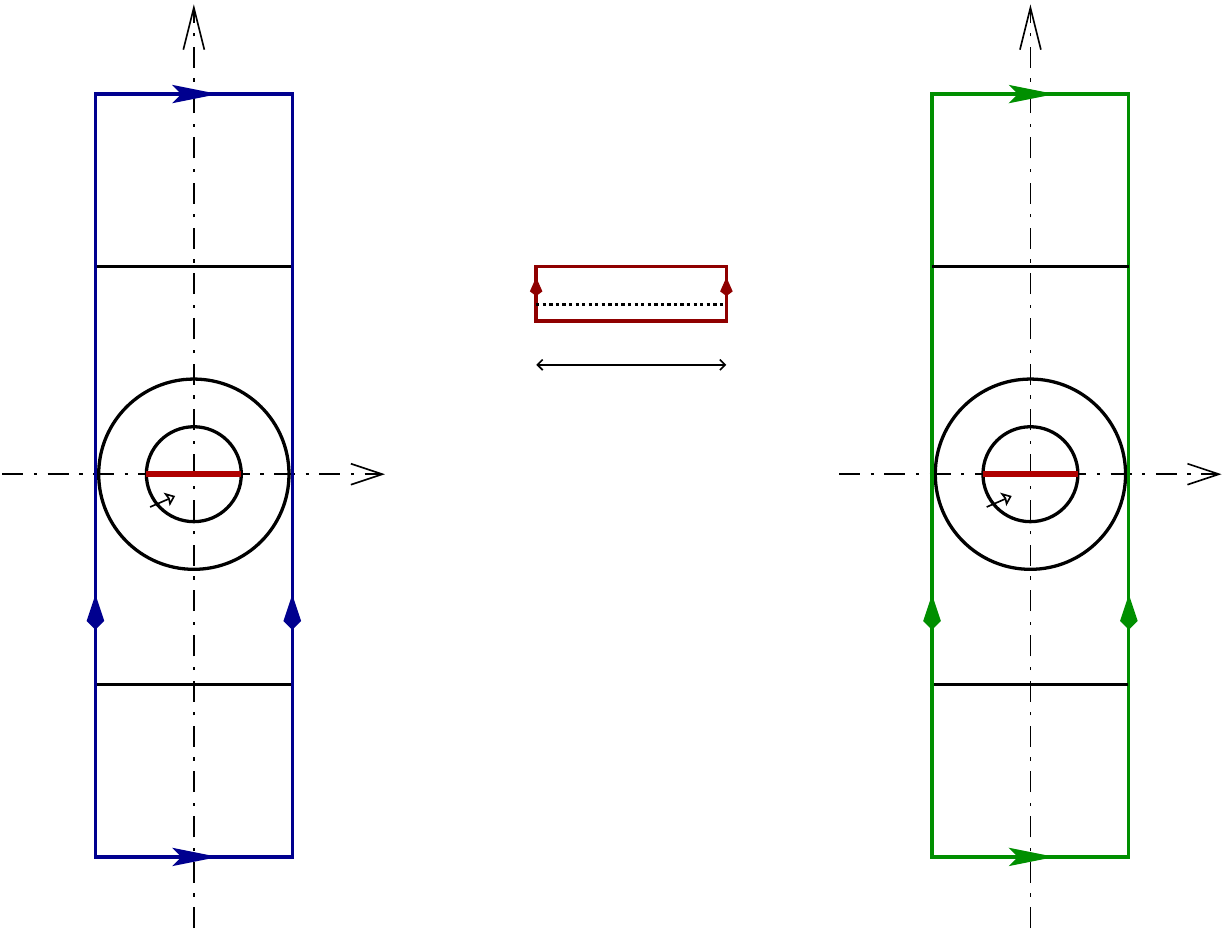}}
   \put(16,64){$\text{\tiny vertical}$}
   \put(77.5,64){$\text{\tiny vertical}$}
   \put(2,55){\small $T_1$} 
   \put(63.5,55){\small $T_2$}   
   \put(17,44){\small $C_1$}  
   \put(78.3,44){\small $C_2$} 
   \put(8.7,29.7){\small $B_1$}  
   \put(70,29.7){\small $B_2$}
   \put(17.1,36){\small $A_1$}  
   \put(78.4,36){\small $A_2$} 
   \put(45.5,46.5){$\alpha$}
   \put(24,36){$\text{\tiny horizontal}$}
   \put(85,36){$\text{\tiny horizontal}$}
   \put(35,46){\small $C$}
   \put(44.2,38.7){$a\:\!e^t$}
\end{picture}
 \caption{Metric $\rho_t$ on $x_t$ when $t>0$.} \label{ex1}
\end{figure}

Further along the ray, when $t>0$, the modulus of $C$ is decreasing exponentially.
We estimate  $\Ext_{x_t}(\alpha)$ for $t \in \mathcal I= (0,\frac{1}{2}\log\frac{1}{a^2})$.
For the lower bound, consider the cylinder $A$ which is the union of $C$ 
and the maximal annuli in $T_i$ whose boundary is a round circle centered
at the middle of the slit. Then $A$ contains two disjoint copies of annuli of inner radius 
$(ae^t/4)$ and outer radius $(a e^t/2)$ (the condition on $t$
guarantees that these annuli do not touch the top or the bottom edges of $T_1$ and 
$T_2$.) Both of these annuli have modulus of $\frac{1}{2\pi}\log 2$, and therefore 
$\Mod_{x_t}(A)\geq \frac{1}{\pi}\log 2$. Hence 
\begin{equation}\label{ex:lowconst}
\Ext_{x_t}(\alpha)\leq \frac{1}{\Mod_{x_t}(A)}\leq  \frac{\pi}{\log 2}.
\end{equation}

For the upper bound, we use the metric $\rho_t$ defined as follows 
(see \figref{ex1}):  Let $A_{i}$ be the annuli in $T_i$ centered at the midpoints 
of the corresponding slits with inner radius $\frac{a e^t+\delta}{4}$ and outer radius 
$\frac{ae^t-\delta}{2}$ for a very small $\delta$.  Let $\rho_t$ be 
$\frac{1}{|z|}|dz|$ on $A_i$, and the flat metric $|dz|$ on $C$  scaled so that 
the circumference is $2\pi$. The complement of $A_i$ and $C$ 
consists of two annuli $B_1,B_2$ and two once-holed tori $C_1$ and $C_2$
with $B_i,C_i\in T_i$. On each of these components, we will define $\rho_t$ so that 
the shortest representative of $\alpha$ has length at least $2\pi$ and the area 
is bounded above.  More precisely, let $\rho_t=\frac{2\pi}{a e^t}|dz|$ on $B_i$.
On $C_i$, let $\rho_t$ be $\frac{2}{ae^t-\delta}|dz|$ if 
$|Im\,  z|< \frac{1}{2}(\pi+1)(ae^t-\delta)$, and zero otherwise.

The area of $C$ in this metric is $(2\pi a e^t) \dot (2\pi a e^{-t}) = O(1)$.  
The pieces $B_i$ and $C_i$ have diameters of order $O(ae^t)$ in $\rho_t$ and hence have area of order $O(1)$.  The annuli $A_i$ in this metric are isometric to flat cylinders 
of circumference $2 \pi$ and width less than $\log 2$, which also has area one. 
Thus,
$$
\text{Area}_{\rho_t}(\mathcal S) = O(1).
$$

 Also, the $\rho_t$-length of any curve $\alpha'$ homotopic to $\alpha$ is $\ell_{\rho_t}(\alpha')\geq 2\pi$. Indeed, any curve contained in one of the annuli, has $\rho_t$-length at least $2\pi$. Morever, any subarc  of $\alpha'$ with endpoints on a boundary component of an annulus can be homotoped relative to the endpoints to the boundary without increasing the length. 
 
Since the area of $\rho_t$ is uniformly bounded above (independent of $a$ and $t$) 
and the length of $\alpha$ in $\rho_t$ is larger $2\pi$, the extremal length
\begin{equation}\label{ex:upconst}
   \Ext_{x_t}(\alpha) \geq 
   \frac{\underset{\alpha'\sim\alpha}{\inf} \ell_{\rho_t}(\alpha')^2} 
          {\Area_{\rho_t}(\mathcal S)},
\end{equation}
is bounded below on $\mathcal I$ by a constant independent of $t$ and $a$. 
Combining this with  \eqref{ex:upconst} we see that, as $a \to 0$ 
(and hence the size of $\mathcal I$ goes to $\infty$), the average slope of 
$\Ext_{x_t}(\alpha)$ on $\mathcal I$ is near zero. In particular, the average slope 
on $\mathcal I$  can be made smaller than $\frac 18$ which implies that
the function $\Ext_{x_t}(\alpha)$ is not convex. Combining \eqref{ex:maskit}  and the estimates of the  extremal length 
above, we come to the same conclusion about $\Hyp_{x_t}(\alpha)$.
\end{example}

\begin{example} [Geodesics near the boundary] 
Here we describe how, for any $R>0$, a geodesic segment of length 
comparable to $R$ can stay near the boundary of a ball of radius $R$. 
This example suggests that metric balls in $\calT(S)$ might not be convex.

Let $x$ be a point in the thick part of  $\calT(S)$ and $\mu_x$ be the short
marking of $x$. Pick any two disjoint curves $\alpha, \beta$ in $\mu_x$. 
Let $y=\mathcal{D}_{(\alpha)}^nx$, and $z=\mathcal {D}_{(\alpha,\beta)}^nx$, where 
$\mathcal{D}_{(*)}$ is the Dehn twist around a multicurve $(*)$. The intersection 
numbers between the short markings of $x,y,z$ satisfy
$$
 i(\mu_x,\mu_y)\emul  i(\mu_x,\mu_z)\emul i (\mu_y, \mu_z) \emul n.
$$
Hence, by Theorem 2.2 in \cite{rafi:TL}, we have
$$
d_\calT(x,y)\eadd d_\calT(x,z)\eadd d_\calT(y,z) \eadd \log{n}.
$$
That is, $[y,z]$ is a segment of length $\log n$ whose end points
are near the boundary of the ball $\calB(x, \log n)$.  
We will show, for $w\in[y,z]$, that $d_\calT(x, w) \eadd \log n$, which means
the entire geodesic $[y,z]$ stays near the boundary of the ball $\calB(x, \log n)$.  
Let $\alpha'$ be a curve that intersects $\alpha$, is disjoint from $\beta$
and $\Ext_y(\alpha')=O(1)$.  Since $\alpha'$ intersect $\alpha$,
$$\Ext_x(\alpha')\emul n^2,$$  
and since $\alpha'$ is disjoint from $\beta$, 
$$\Ext_z(\alpha') = O(1).$$
By Theorem \ref{Thm:Extremal}, 
$$
\Ext_w(\alpha')  \leq\quasi \max \big\{\Ext_y(\alpha'),\Ext_z(\alpha)\big\} =O(1). 
$$
We now have 
$$
d_{\calT(S)}(w,x)\geq\frac{1}{2}\log\frac{\Ext_x(\alpha')}{\Ext_w(\alpha')}\eadd \log{n}.
$$ 
\end{example}

\bibliographystyle{alpha}
\bibliography{../../main}

\begin{thebibliography}{BBFS09}

\bibitem[BBFS09]{souto:SC}
M.~Bestvina, K.~Bromberg, K.~Fujiwara, and J.~Souto.
\newblock Shearing coordinates and convexity of length functions on
  teichm\"ueller space.
\newblock arXiv:0902.0829, 2009.

\bibitem[CR05]{rafi:TL}
Y.~Choi and K.~Rafi.
\newblock {Comparison between Teichm\"uller and Lipschitz metrics}.
\newblock arXiv:math.GT/0510136, 2005.

\bibitem[CRS06]{rafi:LT}
Y.~Choi, K.~Rafi, and C.~Series.
\newblock {Lines of minima and Teichm\"uller geodesics}.
\newblock arXiv:math.GT/0605135, 2006.

\bibitem[Ker83]{kerckhoff:NR}
S.~P. Kerckhoff.
\newblock The {N}ielsen realization problem.
\newblock {\em Ann. of Math. (2)}, 117(2):235--265, 1983.

\bibitem[Mas85]{maskit:HE}
B.~Maskit.
\newblock Comparison of hyperbolic and extremal lengths.
\newblock {\em Ann. Acad. Sci. Fenn. Ser. A I Math.}, 10:381--386, 1985.

\bibitem[Min92]{minsky:HM}
Y.~N. Minsky.
\newblock Harmonic maps, length, and energy in {T}eichm\"uller space.
\newblock {\em J. Differential Geom.}, 35(1):151--217, 1992.

\bibitem[Min96]{minsky:PR}
Y.~N. Minsky.
\newblock Extremal length estimates and product regions in {T}eichm\"uller
  space.
\newblock {\em Duke Math. J.}, 83(2):249--286, 1996.

\bibitem[Raf05a]{rafi:SC}
K.~Rafi.
\newblock A characterization of short curves of a {T}eichm{\"u}ller geodesic.
\newblock {\em Geometry and Topology}, 9:179--202, 2005.

\bibitem[Raf05b]{rafi:TT}
K.~Rafi.
\newblock Thick-thin decomposition of quadratic differentials.
\newblock preprint, 2005.

\bibitem[Raf07]{rafi:CM}
K.~Rafi.
\newblock {A combinatorial model for the Teichm\"uller metric}.
\newblock {\em Geom. Funct. Anal.}, 17(3):936--959, 2007.

\bibitem[Str80]{strebel:QD}
K.~Strebel.
\newblock {\em Quadratic differentials}, volume~5 of {\em A series of Modern
  Surveys in Mathematics}.
\newblock Springer-Verlag, Berlin, 1980.

\bibitem[Wol87]{wolpert:LN}
S.~A. Wolpert.
\newblock Geodesic length functions and the {N}ielsen problem.
\newblock {\em J. Differential Geom.}, 25(2):275--296, 1987.

\end{thebibliography}

\end{document}